\numberwithin{equation}{section} 
\def\R{\mathbb{R}}
\def\E{\mathbb{E}}
\newlength{\leftstackrelawd}
\newlength{\leftstackrelbwd}
\def\leftstackrel#1#2{\settowidth{\leftstackrelawd}%
{${{}^{#1}}$}\settowidth{\leftstackrelbwd}{$#2$}%
\addtolength{\leftstackrelawd}{-\leftstackrelbwd}%
\leavevmode\ifthenelse{\lengthtest{\leftstackrelawd>0pt}}%
{\kern-.5\leftstackrelawd}{}\mathrel{\mathop{#2}\limits^{#1}}}
\title{First- and Second-Order High Probability Complexity Bounds for Trust-Region Methods with Noisy Oracles}
\author{L. Cao\footnotemark[1]\ \footnotemark[4]
   \and  A. S. Berahas\footnotemark[2]
   \and K. Scheinberg\footnotemark[3]}
\begin{document}

\renewcommand{\thefootnote}{\fnsymbol{footnote}}
\footnotetext[1]{Beijing International Center for Mathematical Research, Peking University, Beijing, China;  \href{mailto:caoliyuan@bicmr.pku.edu.cn}{caoliyuan@bicmr.pku.edu.cn}}
\footnotetext[2]{Department of Industrial and Operations Engineering, University of Michigan, Ann Arbor, MI, USA; \href{mailto:albertberahas@gmail.com}{albertberahas@gmail.com}}
\footnotetext[3]{School of Operations Research and Information Engineering, Cornell University, Ithaca, NY, USA;  \href{mailto:katyas@cornell.edu}{katyas@cornell.edu}}
\footnotetext[4]{Corresponding author}
\renewcommand{\thefootnote}{\arabic{footnote}}
\maketitle

\begin{abstract}
  
In this paper, we present convergence guarantees for a modified trust-region method designed for minimizing objective functions whose value and gradient and Hessian estimates are computed with noise. 
These estimates are produced by generic stochastic oracles, which are not assumed to be unbiased or consistent. 
We introduce these oracles and show that they are more general and have more relaxed assumptions than the stochastic oracles used in prior literature on stochastic trust-region methods. 
Our method utilizes  a relaxed step acceptance criterion and a cautious trust-region radius updating strategy which allows us to derive exponentially decaying tail bounds on the iteration complexity for convergence to points that satisfy approximate first- and second-order optimality conditions. 
Finally, we present two sets of numerical results. We first explore the tightness of our theoretical results on an example with adversarial zeroth- and first-order oracles. We then investigate the performance of the modified trust-region algorithm on standard noisy derivative-free optimization problems.
\end{abstract}

\section{Introduction}
The trust-region (TR) methods form a well-established class of iterative numerical methods for optimizing nonlinear continuous functions. 
In each iteration, TR methods minimize an approximation model of the objective function, often a quadratic model, within a trust-region.
The book \cite{TRbook} contains an exhaustive coverage of these methods up to the time of its publication, and \cite{yuan2015recent} offers a more recent survey. 
In this article, we consider the behavior of TR methods on unconstrained continuous optimization problems
\begin{align} \label{eq.prob}
	\min_{x \in \mathbb{R}^n} \phi(x), 
\end{align}
where $\phi:\mathbb{R}^n \rightarrow \mathbb{R}$ is (twice) differentiable with Lipschitz continuous derivatives, but neither the objective function nor its associated derivatives are assumed to be computable accurately.

Under the condition that the function $\phi$ and its derivatives can be evaluated accurately, the complexity analysis of TR methods (for solving problem \eqref{eq.prob}) is well-established \cite{TRbook}. 
However, this condition is not always satisfied in the real-world problems. 
For example, in derivative-free optimization (DFO), the function $\phi$ is often a mapping between the input and the output of a computer program, such as a simulation of the physical world or the training of a machine learning model. 
As a result, there can be noise in the evaluation of $\phi$ due to limited numerical precision or randomness. 
Furthermore, the derivatives of $\phi$ cannot be computed directly and, when needed, can only be estimated using zeroth-order information. 
Another example is empirical risk minimization (ERM), where the objective function is the average of many functions, i.e., $\phi(x) = \frac{1}{N} \sum_{i=1}^N l(x,d_i)$. 
In settings where the number of functions $N$ is large (or even infinite, in which case the average becomes an expectation), it is typical to use the average of a subset of the functions, i.e., $\phi_{\mathcal{B}}(x) = \frac{1}{|\mathcal{B}|} \sum_{d_i\in\mathcal{B}}l(x,d_i)$, where $\mathcal{B} \subseteq \{d_1,\dots, d_N\}$, instead of the true objective function $\phi$ to reduce the computational effort, at the cost of introducing errors in the evaluations.

When dealing with such problems, algorithms utilize various, usually stochastic, approximations of the objective function and derivative information ($\phi(x)$, $\nabla \phi(x)$ and $\nabla^2 \phi(x)$) in lieu of their exact counterparts. 
These approximations vary in terms of quality and reliability, and a variety of algorithms, not just TR methods, have been proposed and analyzed under different assumptions on the approximations employed.  
In order to give an overview of existing works and to clearly describe the contributions of this paper, we find it convenient to first propose and define general oracles that compute approximations of $\phi(x)$, $\nabla \phi(x)$ and $\nabla^2 \phi(x)$,  and then discuss the different assumptions on these oracles made in prior literature as compared to those made in this paper. Definition~\ref{def:oracle} presents the general form of the stochastic oracles considered.
\begin{definition}[\bf Stochastic $j$th-order oracle over a set $\mathcal{S}_j$] \label{def:oracle}
We say that the $j$th-order oracle $\varphi_j$ is implementable over a set $\mathcal{S}_j \subseteq [0,+\infty) \times [0,1]$ if it is capable of producing an estimate of $\nabla^j\phi$ that satisfies 
\begin{equation} \label{eq:oracle}
	\mathbbm{P}_{\xi^{(j)}}\left\{\|\varphi_j(x,\xi^{(j)})- \nabla^j \phi(x)\| \leq A_j\right\} \geq p_j 
\end{equation} 
for each $(A_j,p_j)\in \mathcal{S}_j$ and any $x\in\mathbb{R}^n$, 
where $\xi^{(j)}$ is a random variable defined on some probability space whose distribution depends on $(A_j,p_j)$ and $x$, and $\mathbbm{P}_{\xi^{(j)}}$ denotes probability with respect to that distribution. 
\end{definition}

Here $\nabla^j \phi(x)$ denotes the $j$-th derivative of $\phi(x)$, where $\nabla^0 \phi(x)$ reduces to $\phi(x)$. 
Clearly, if $(A_j, p_j)\in \mathcal{S}_j$ for some  $(A_j, p_j)$, then $[A_j, +\infty) \times [0,p_j] \subseteq \mathcal{S}_j$. 
The cost of implementing the oracles depends on the application and will not be considered in this paper. 
In most applications, the actual cost depends on $(A_j,p_j)$ and monotonically decreases as $A_j$ increases and $p_j$ decreases. 
We use DFO and ERM as examples to discuss how to implement oracles satisfying \eqref{eq:oracle} for a given $(A_j, p_j)$ in Section~\ref{sec:oracles}. 
That being said, the focus of this paper is on the analysis of TR methods under weak  assumptions on the oracles in terms of the sets $S_j$. The results will thus apply whenever an oracle is implementable over the corresponding sets. 

There are a variety of TR and line search algorithms in the literature that rely on stochastic oracles of this general form, although they are typically posed in a different way, specialized for each paper. 
For example, \cite{bandeira2014convergence,gratton2018complexity} analyze the convergence of TR methods under the assumption that the zeroth-order oracle gives exact function values, and, at each iteration $k$, the first-order oracle can produce, with sufficiently high probability,  a gradient estimate whose error is no more than the TR radius ($\delta_k$) multiplied by a constant. Since $\delta_k$ is not guaranteed to be bounded away from zero, these works effectively assume that the first-order oracle can produce gradient estimates with arbitrarily high precision, albeit only with a sufficiently high probability. 
Using our definition of stochastic oracles, \cite{bandeira2014convergence,gratton2018complexity} assume their zeroth-order oracles are implementable over the (largest possible) set $\mathcal{S}_0$ that contains $(0,1)$, and their first-order oracles are implementable over $(0,\infty) \times [0, \bar p_1]$, where $\bar p_1$ is sufficiently large (e.g., $\bar p_1>\frac{1}{2}$ in  \cite{bandeira2014convergence}). 
In contrast, \cite{STORM,blanchet2019convergence} analyze a first-order TR method under the assumption that both zeroth- and first-order oracles are implementable over $(0,\infty)\times [0,\bar p_j]$, $j = 0,1$, for sufficiently large $\bar p_0$ and $\bar p_1$.  
In \cite{blanchet2019convergence}, a second-order TR method is also analyzed under the assumption that $\mathcal{S}_j = (0,\infty)\times [0,\bar p_j]$, $j=0,1,2$, for sufficiently large $\bar p_0$, $\bar p_1$ and $\bar p_2$, with an additional bound on ${\mathbb E_{\xi_0}}[|\varphi_0(x,\xi_0)-\phi(x)|]]$, which requires a larger set $\mathcal{S}_0$. 
Finally, we mention a recent technical report \cite{sun2022noisy} where a first-order TR method with a relaxed step acceptance criterion is analyzed for oracles with deterministically bounded noise, i.e., $\mathcal{S}_0 = [\epsilon_f,+\infty) \times [0,1]$ and $\mathcal{S}_1= [\epsilon_g, +\infty) \times [0,1]$ for some positive constants $\epsilon_f$ and $\epsilon_g$, which represent the irreducible upper bounds on the noise in the oracles.  

Examples of ``line search''-like methods\footnote{These methods change the random  search direction even when reducing step size, thus, should not be strictly called ``line search'' methods. In \cite{jin2021high}, the  term ``step search'' was introduced to distinguish the two classes of methods.} based on stochastic oracles include the following. The authors in \cite{cartis2018global} analyze a stochastic line search method in the same oracle setting as in \cite{bandeira2014convergence,gratton2018complexity}. 
In \cite{paquette2020stochastic}, a modified stochastic line search method is proposed and analyzed in a setting similar to that of \cite{blanchet2019convergence}.
A line search method with relaxed Armijo condition is analyzed in \cite{berahas2019derivative} under deterministic oracle assumptions, i.e., the sets $\mathcal{S}_0$ and $\mathcal{S}_1$ contain pairs $(\epsilon_f,1)$ and $(\epsilon_g,1)$, respectively, for some positive constants $\epsilon_f$ and $\epsilon_g$. 
In \cite{berahas2021global}, the analysis of the line search method with relaxed Armijo condition is extended to less restrictive oracle settings where $\mathcal{S}_0 = [\epsilon_f, +\infty) \times [0,1]$ and $\mathcal{S}_1 = [0,+\infty) \times [0,\bar p_1]$ for some positive $\epsilon_f$ and sufficiently large $\bar p_1$. 
 
In a recent paper \cite{jin2021high}, the same line search as in \cite{berahas2021global,berahas2019derivative} was analyzed under weaker oracle conditions. In particular, $\mathcal{S}_1 = [\epsilon_g,+\infty) \times [0, \bar{p}_1]$, where $\epsilon_g$ (some positive constant) is the best possible accuracy that the first-order oracle can guarantee and $\bar{p}_1$ is some constant larger than $1/2$. 
The zeroth-order oracle is a bit more complex: specifically, it is assumed to be implementable over $\mathcal{S}_0 = \{(A_0, p_0):~ A_0\geq \epsilon_f \text{ and } p_0 \le 1-\exp(a(\epsilon_f-A_0))\}$\footnote{This is a simplified version of the actual zeroth-order oracle in \cite{jin2021high}.}, for some $a>0$ and $\epsilon_f \ge 0$. 
This means that the errors in the function value estimates may not be bounded by $\epsilon_f$, but the distribution of the estimates is such that the probability of this error being larger than $\epsilon_f$, while positive, decays exponentially. 
A more extensive discussion and comparison of different oracles is presented in Section~\ref{sec:oracles}. 

While most of the prior papers provide the analysis of expected iteration complexity of the corresponding algorithms, in \cite{jin2021high}, as well as \cite{gratton2018complexity}, high probability (with exponentially decaying tail) bounds on the iteration complexity are derived. 
In this paper we draw inspiration from \cite{jin2021high} and propose a TR method with a relaxed step acceptance criterion, and provide convergence guarantees under similar oracle conditions. In summary, our improvement upon the latest literature is as follows:
\begin{itemize}
\item In comparison to \cite{blanchet2019convergence}, we allow irreducible noise in the zeroth-, first-, and second-order oracles, so our analysis applies to problems where evaluation noise cannot be reduced below certain level. 
Our zeroth-order oracle is both more relaxed because it allows irreducible noise and yet somewhat stronger because it assumes a light-tailed distribution of the noise.   This allows us to derive high probability complexity bounds for both first- and second-order versions of the algorithm as compared to bounds in expectation. 

\item In comparison to  \cite{gratton2018complexity}, we use more relaxed first- and second-order oracles, by allowing irreducible noise, and also a significantly more relaxed zeroth-order oracle, as it is assumed to be exact in \cite{gratton2018complexity}. 

\item In comparison to  \cite{jin2021high}, which analyzed a first-order step search method, we analyze first- and second-order TR methods. The difference between the complexity analyses for the two types of algorithms is significant, particularly in the presence of irreducible noise. 
New analytical techniques are employed to derive our results,  e.g., the step size threshold $\bar\alpha$ in \cite{jin2021high} is a constant whereas the TR radius threshold $\bar\Delta$ in this paper is a random variable.  Second order analysis presented here is the first such analysis for irreducible noise. 
\end{itemize}
In terms of iteration complexity, we obtain similar bounds to those in \cite{gratton2018complexity,blanchet2019convergence,jin2021high}.  It is important to note that in this paper, as in many others that we discuss above,  we analyze iteration complexity under specific assumptions on the oracles, but without directly accounting for the oracle costs.  All the algorithms mentioned above are designed to update oracle accuracy (and thus their costs) adaptively, which makes the algorithms more practical, but harder to analyze in terms of the total oracle cost (or work). 

\textbf{Organization} \quad
The paper is organized as follows. 
In Section~\ref{sec:oracles}, we introduce the assumptions and oracles, as well as motivate the oracles with examples from the literature. 
In Section~\ref{sec:algorithm}, we introduce our modified TR algorithms and present some preliminary technical results and describe the stochastic process used to analyze the algorithms. 
The high probability tail bounds on the iteration complexity of the first- and second-order algorithms are presented in Sections~\ref{sec:analysis1} and \ref{sec:analysis2}, respectively. 
In Section~\ref{sec:dual}, we present  synthetic numerical experiments that simulate the worst-case behavior allowed under our oracle assumptions to support our theoretical findings. 
Finally, we test a practical TR algorithm with our proposed modification numerically in Section~\ref{sec:r}. 
We summarize the article in Section~\ref{sec:conclusion}.

\section{Assumptions and oracles}
\label{sec:oracles}
We consider the unconstrained optimization problem \eqref{eq.prob} with the following assumptions on $\phi$. Let $\langle \cdot,\cdot \rangle$ denote the sum of entry-wise products and $\|\cdot\|$ the 2-norm.
 \begin{assumption}[\textbf{Lipschitz-smoothness}]	\label{assum:lip_cont}  
The function $\phi$ is continuously differentiable, and the gradient of $\phi$ is $L_1$-Lipschitz continuous on $\mathbb{R}^n$, i.e., $\|\nabla\phi(y) - \nabla\phi(x)\| \le L_1 \|y-x\|$ for all $(y,x) \in \mathbb{R}^n\times\mathbb{R}^n$.
\end{assumption}
\begin{assumption}[\textbf{Lipschitz continuous Hessian}]	\label{ass:lip_cont2} 
The function $\phi$ is twice continuously differentiable, and the Hessian of $\phi$ is $L_2$-Lipschitz continuous on $\mathbb{R}^n$, i.e., $\|\nabla^2\phi(y) - \nabla^2\phi(x)\| \le L_2 \|y-x\|$ for all $(y,x) \in \mathbb{R}^n\times\mathbb{R}^n$. 
\end{assumption}
\begin{assumption}[\textbf{Lower bound on $\pmb{\phi}$}] \label{assum:low_bound} 
The function $\phi$ is bounded below by a scalar $\hat{\phi}$ on $\mathbb{R}^n$. 	
\end{assumption}

Our algorithms utilize approximations of $\phi$, $\nabla\phi$, and $\nabla^2\phi$ obtained via stochastic oracles. 
We assume our zeroth-, first- and second-oracles have the following properties in terms of accuracy and reliability. 

\begin{customoracle}{0}[\textbf{Stochastic zeroth-order oracle}]\label{oracle.zero}
Given a point $x \in \mathbb{R}^n$,  the oracle computes $f(x,\xi^{(0)})$, a (random) estimate of the function value $\phi(x)$, where $\xi^{(0)}$ is a random variable whose distribution may depend on $x$. 
Let $e(x,\xi^{(0)}) = f(x,\xi^{(0)}) - \phi(x)$. 
For any $x \in \mathbb{R}^n$, $e(x,\xi^{(0)})$ satisfies at least one of the two conditions:
\begin{enumerate}
    \item \label{ass:bounded noise} 
    {\bf (Deterministically bounded noise: zeroth-order oracle implementable over $\mathcal{S}_0 = [\epsilon_f,\infty) \times [0,1]]$)} There is a constant $\epsilon_f \geq 0$ such that $|e(x,\xi^{(0)})| \le \epsilon_f$  for all realizations of $\xi^{(0)}$. 
    \item \label{ass:subexponential noise} 
    {\bf (Independent subexponential noise: zeroth-order stochastic oracle implementable over $\mathcal{S}_0 = \{(A_0,p_0):~ A_0\ge\epsilon_f \text{ and } p_0\le1-\exp(a(\epsilon_f-A_0))\}$)} There are constants $\epsilon_f \ge 0$ and $a > 0$ such that $\mathbbm{P}_{\xi^{(0)}} \left\{ |e(x,\xi^{(0)})| > t \right\} \le \exp (a(\epsilon_f - t))$ for all $t \ge 0$. 
\end{enumerate}
\end{customoracle}
\begin{remark}
The two oracles introduced above will be referred to as Oracle~\ref{oracle.zero}.\ref{ass:bounded noise} and Oracle~\ref{oracle.zero}.\ref{ass:subexponential noise}, respectively. 
When Oracle~\ref{oracle.zero}.\ref{ass:bounded noise} is implemented for any $x\in\mathbb{R}^n$, it returns an estimate of $\phi(x)$ with bounded noise. 
This case includes deterministic or even adversarial noise, as long as it is bounded by $\epsilon_f$. 
Otherwise, when Oracle~\ref{oracle.zero}.\ref{ass:subexponential noise} is implemented, the cumulative distribution of the noise has a subexponential tail whose rate of decay is governed by $a$, but is unrestricted on the interval $[-\epsilon_f, \epsilon_f]$ as the right-hand side $\exp (a(\epsilon_f - t)) \ge 1$ when $t \le \epsilon_f$. The constants $\epsilon_f$ and $a$ are considered to be intrinsic to the oracle. 
\end{remark}

\begin{customoracle}{1}[\textbf{Stochastic first-order oracle implementable over ${\cal S}_1 = (\epsilon_g, \infty) \times [0,p_1]$}]\label{oracle.first}
Given $\delta^{(1)}>0$, a probability $p_1\in[0.5,1]$, and a point $x \in \mathbb{R}^n$, the oracle computes $g(x,\xi^{(1)})$, a (random) estimate of the gradient $\nabla \phi(x)$ that satisfies 
\begin{equation}\label{eq:first_order}
{\mathbb P}_{\xi^{(1)}}\left\{\|g(x,\xi^{(1)})-\nabla \phi(x)\| \leq \epsilon_g+\kappa_{\rm eg} \delta^{(1)} \right\} \geq p_1,
\end{equation}
where $\xi^{(1)}$ is a random variable (whose distribution may depend on the input $x$ and $\delta^{(1)}$). 
\end{customoracle}

\begin{customoracle}{2}[\textbf{Stochastic second-order oracle implementable over ${\cal S}_2 = (\epsilon_H, \infty)\times [0,p_2]$}]\label{oracle.second}
Given $\delta^{(2)}>0$, a probability $p_2\in[0.5,1]$  and a point $x \in \mathbb{R}^n$, the oracle computes $H(x,\xi^{(2)})$, a (random) estimate of the Hessian $\nabla^2 \phi(x)$, such that 
\begin{equation}\label{eq:second_order}
{\mathbb P}_{\xi^{(2)}}\left\{ \|H(x,\xi^{(2)})-\nabla^2\phi(x)\|\leq \epsilon_H+\kappa_{\rm eh} \delta^{(2)} \right\} \geq p_2,
\end{equation}
where $\xi^{(2)}$ is a random variable (whose distribution may depend on the input $x$ and $\delta^{(2)}$). 
\end{customoracle}

\begin{remark} 
The inputs to the first- and second-order oracles are the triplets $(\delta^{(j)},p_j,x)$, $j=1,2$. 
The constants $\epsilon_g$, $\epsilon_H$, $\kappa_{\rm eg}$ and $\kappa_{\rm eh}$ are nonnegative and are intrinsic to the oracles\footnote{``eg'' and ``eh'' stand for ``\underline{e}rror in the \underline{g}radient'' and ``\underline{e}rror in the \underline{H}essian'', respectively.}.  
Note that $\epsilon_g$ and $\epsilon_H$ limit the achievable accuracy, thus allowing the oracle to have error up to $\epsilon_g$ and $\epsilon_H$, respectively, with probability up to 1. 
The positive values $\delta^{(1)}$ and $\delta^{(2)}$ will be chosen dynamically by the algorithm, according to the TR radius. 
The probabilities $p_1$ and $p_2$ will be chosen to be constant and will need to satisfy certain bounds which will be derived in the theoretical analyses of our algorithms.
\end{remark}

These oracle definitions are special cases of the oracle defined in the introduction (Definition~\ref{def:oracle}). Henceforth, we will use  $f(x,\xi^{(0)})$, $g(x,\xi^{(1)})$ and  $H(x,\xi^{(2)})$ to denote the outputs of the oracles (instead of $\varphi_j(x,\xi^{(j)})$, $j=0,1,2$, in Definition~\ref{def:oracle}). 
The expressions $g(x,\xi^{(1)})$ and $H(x,\xi^{(2)})$ will be further abbreviated to $g(x)$ and $H(x)$ in the rest of Section~\ref{sec:oracles}, and their realizations will be denoted by $g_k$ and $H_k$ once they are put in the context of algorithms (Section~\ref{sec:algorithm} and beyond), where $k$ is the iteration index. 
Similarly, $f(x,\xi^{(0)})$ will be abbreviated as $f(x)$. 
The realizations of $f(x_k,\xi^{(0)})$ and $e(x_k,\xi^{(0)})$ will be denoted by $f_k$ and $e_k$, respectively. 

\subsection{Stochastic oracles used in prior literature}
We now discuss different stochastic oracles used in prior literature for unconstrained optimization and  show how our stochastic oracle definition (Definition~\ref{def:oracle}) relates to them by comparing their respective sets ${\cal S}_j$, $j=0,1,2$ to those used by Oracles \ref{oracle.zero}, \ref{oracle.first} and \ref{oracle.second}. 
A visual comparison of various such sets considered in the literature is provided in Figure~\ref{fig:phase1}. Each subfigure of Figure~\ref{fig:phase1} shows a different set ${\cal S}_j$ on the $A_j$-$p_j$ plane.

There is an important difference between the sets in Figure~\ref{fig:oracle0pclosed} and Figure~\ref{fig:oracle0popen}. 
When a stochastic oracle is implementable over the first set (Figure~\ref{fig:oracle0pclosed}), it means that it is possible to evaluate the function or its derivative exactly, with probability at least $\bar{p}_j$. 
In contrast, when a stochastic oracle is implementable over the second set (Figure~\ref{fig:oracle0popen}), it is assumed that the upper bound on the error (that holds with probability $\bar{p}_j$) can be made arbitrarily small but never zero. 
An oracle of this second type appears naturally when, for example, $\phi$ is an expectation over a distribution from which one can obtain arbitrarily large number of samples.  Figures~\ref{fig:oraclee1} and \ref{fig:oraclesubexp} depict the two conditions assumed for Oracle~\ref{oracle.zero}, the zeroth-order oracle used in this paper. 
The conditions assumed for Oracle~\ref{oracle.first} and \ref{oracle.second} are both depicted in Figure~\ref{fig:oracleepopen}, with $\epsilon_1 = \epsilon_g + \kappa_{\rm eg}\delta^{(1)}$ and $\epsilon_2 = \epsilon_H + \kappa_{\rm eh} \delta^{(2)}$, respectively. 
 
\begin{figure}[tbhp]
  \centering
  \subfloat[$\mathcal{S}_j \ni (0,1)$]{\label{fig:oracle01} \resizebox{0.25\linewidth}{!}{
    \begin{tikzpicture}
   \shade[shading angle=60] (0,0) -- (2.5,0) -- (2.5,1) -- (0,1);
   \draw[->] (-0.2, 0) -- (2.5, 0) node[right] {$A_j$};
   \draw[->] (0, -0.2) -- (0, 1.4) node[above] {$p_j$};
   \filldraw[black] (0,1) circle (2pt) node[anchor=south west]{$(0,1)$};
   \draw (0, 1) -- (2.5, 1);
\end{tikzpicture}
}}
  \hfill
  \subfloat[$\mathcal{S}_j=[0,+\infty)\times{[0,\bar{p}_j]}$]{\label{fig:oracle0pclosed}\resizebox{0.24\linewidth}{!}{
    \begin{tikzpicture}
     \shade[shading angle=60] (0,0.02) -- (2.5,0) -- (2.5,0.8) -- (0.02,0.8);
     \draw[->] (-0.2, 0) -- (2.5, 0) node[right] {$A_j$};
     \draw[->] (0, -.2) -- (0, 1.4) node[above] {$p_j$};
     \draw[black] (0,0.8) circle (2pt) node[anchor=north west]{$(0,\bar p_j)$};
     \draw (0, 1) -- (2.5, 1);
 \end{tikzpicture}
}}
  \hfill
  \subfloat[$\mathcal{S}_j=(0,+\infty)\times{[0,\bar{p}_j]}$]{\label{fig:oracle0popen}\resizebox{0.25\linewidth}{!}{
    \begin{tikzpicture}
      \shade[shading angle=60] (0,0.01) -- (2.5,0) -- (2.5,0.8) -- (0.01,0.8);
      \draw[->] (-0.2, 0) -- (2.5, 0) node[right] {$A_j$};
      \draw (0, -0.2) -- (0, 0);
      \draw[dashed] (0, 0) -- (0, 0.8);
      \draw[->] (0, 0.87) -- (0, 1.4) node[above] {$p_j$};
      \draw[black] (0,0.8) circle (2pt) node[anchor=north west]{$(0,\bar p_j)$};
      \draw (0, 1) -- (2.5, 1);
    \end{tikzpicture}
  }}\\
  \subfloat[$\mathcal{S}_j = (\epsilon_j, +\infty)\times{[0,\bar{p}_j]}$]{\label{fig:oracleepopen}\resizebox{0.24\linewidth}{!}{
    \begin{tikzpicture}
     \shade[shading angle=60] (0.52,0.02) -- (2.5,0) -- (2.5,0.8) -- (0.52,0.8);
     \draw[->] (-0.2, 0) -- (2.5, 0) node[right] {$A_j$};
     \draw[->] (0, -0.2) -- (0, 1.4) node[above] {$p_j$};
     \draw[dashed] (0.52, 0) -- (0.52, 0.8);
     \draw[black] (0.5,0.8) circle (2pt) node[anchor=north west]{$(\epsilon_j,\bar p_j)$};
     \draw (0, 1) -- (2.5, 1);
    \end{tikzpicture}
}}
  \hfill
  \subfloat[$\mathcal{S}_j=[\epsilon_j,+\infty)\times{[0,1]}$]{\label{fig:oraclee1} \resizebox{0.25\linewidth}{!}{
    \begin{tikzpicture}
      \shade[shading angle=60] (0.5,0) -- (2.5,0) -- (2.5,1) -- (0.5,1);
      \draw[->] (-0.2, 0) -- (2.5, 0) node[right] {$A_j$};
      \draw[->] (0, -0.2) -- (0, 1.4) node[above] {$p_j$};
      \filldraw[black] (0.5,1) circle (2pt) node[anchor=south west]{$(\epsilon_j,1)$};
      \draw (0, 1) -- (2.5, 1);
    \end{tikzpicture}
  }}
  \hfill
  \subfloat[\raggedright$\mathcal{S}_j=\{(A_j,p_j):A_j\ge\epsilon, p_j \le 1 - \exp(a(\epsilon-A_j))\}$]{\label{fig:oraclesubexp}\resizebox{0.24\linewidth}{!}{
    \begin{tikzpicture}
      \shade[shading angle=60, domain=0.8:2.5, variable=\x] (0.5,0) -- plot ({\x}, {1-2.71828^(1.5*(0.5-\x))}) -- (2.5,0) -- cycle;
      \draw[->] (-0.2, 0) -- (2.5, 0) node[right] {$A_j$};
      \draw[->] (0, -0.2) -- (0, 1.4) node[above] {$p_j$};
      \filldraw[black] (0.5,0) circle (2pt) node[anchor=south]{$(\epsilon_j,0)$};
      \draw (0, 1) -- (2.5, 1);
    \end{tikzpicture}
  }}
 \caption{A visual comparison of various assumptions on the oracle. Each subfigure shows a different set $\mathcal{S}_j$ on the $A_j$-$p_j$ plane.}
 \label{fig:phase1}
\end{figure}
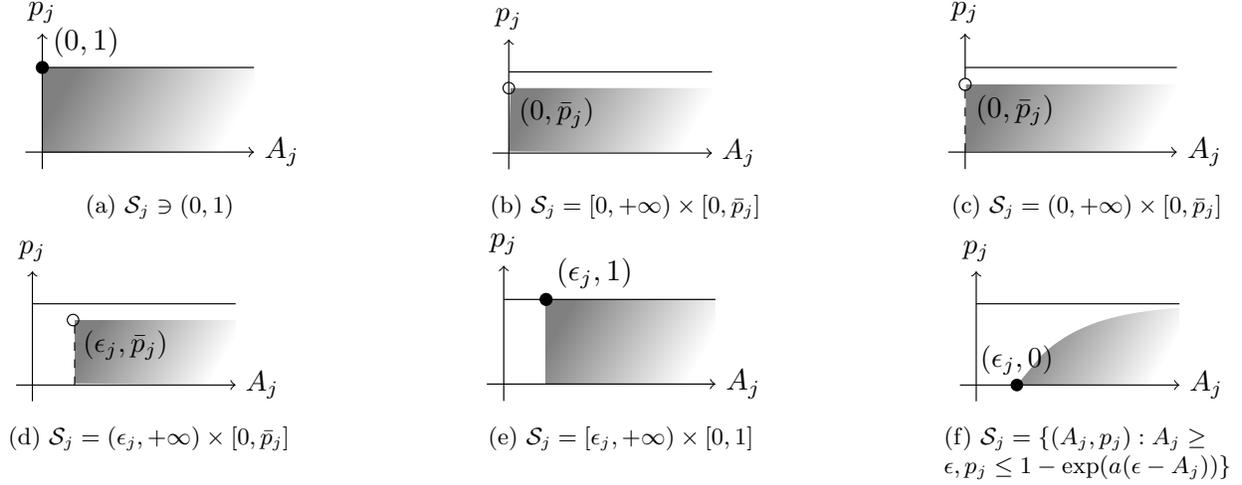

\paragraph{Probabilistic Taylor-like Conditions.} Corresponding to a norm $\|\cdot\|$, let $B(x,\delta)$ denote a ball of radius~$\delta$ centered at $x\in\mathbb{R}^n$.  If the gradient and Hessian estimates, $g(x)$ and $H(x)$, respectively, satisfy
 \begin{equation}\label{eq:fully-linear}
    \|g(x) - \nabla \phi(x)\| \leq \kappa_{\rm eg} \delta\ \ \text{and}\ \  \|H(x) \| \leq \kappa_{\rm bhm}
  \end{equation}
  for some nonnegative scalars $\kappa_{\rm eg}$ and $\kappa_{\rm bhm}$\footnote{``bhm" stands for ``\underline{b}ound on the \underline{H}essian of the \underline{m}odel".}, then the model $m : \mathbb{R}^{n} \to \mathbb{R}{}$ defined by
  \begin{equation*}
    m(y) = \phi(x) + \langle g(x), y - x\rangle + \frac12\langle H(x)(y - x), y - x\rangle
  \end{equation*}
  gives an approximation of $f(y)$ within $B(x,\delta)$ that is comparable to that given by an accurate first-order Taylor series approximation (with error dependent on $\delta$).  Such models, introduced in \cite{DFObook}, are known as {\em fully linear} models on $B(x,\delta)$. Similarly, if
  \begin{equation}\label{eq:fully-quad}
   \|g(x) - \nabla \phi(x)\| \leq \kappa_{\rm eg} \delta^2\ \ \text{and}\ \ \|H(x) - \nabla^2 \phi(x)\| \leq \kappa_{\rm eh}\delta,
  \end{equation}
  for some nonnegative scalars $(\kappa_{\rm eg},\kappa_{\rm eh})$, then $m(y)$ gives an approximation of $f(y)$ that is comparable to that given by an accurate second-order Taylor series approximation.  Such models are known as {\em fully quadratic} models on $B(x,\delta)$. 
  
  The concept of {\em $p$-probabilistically fully-linear} and {\em fully-quadratic models} was introduced in \cite{bandeira2014convergence} and requires conditions \eqref{eq:fully-linear} or \eqref{eq:fully-quad} to hold with  probability $p$ at each iteration of a trust region algorithm, conditioned on the past. 
  Thus, a $p$-probabilistically fully-linear model can be constructed given access to Oracle~\ref{oracle.first} with $\epsilon_g=0$ and input $\delta_1=\delta$ and $p_1=p$, and a $p$-probabilistically fully-quadratic model can be built given access to Oracle~\ref{oracle.first} with   $\epsilon_g=0$ and input $\delta^{(1)}=\delta^2$ and Oracle~\ref{oracle.second} with $\epsilon_H=0$ and input $\delta^{(2)}=\delta$ and $p_1p_2\geq p$. 

  In \cite{bandeira2014convergence,gratton2018complexity}, the convergence and complexity of a trust region method were analyzed under the  assumption that  Oracles \ref{oracle.first} and \ref{oracle.second} are implementable over $\Scal_j=(0,\infty) \times [0, \bar p_j]$, where $\bar p_j$ is sufficiently large, $j=1,2$ (Figure~\ref{fig:oracle0popen}). 
  On the other hand, the function oracle  in \cite{bandeira2014convergence,gratton2018complexity} was assumed to be exact and not stochastic, i.e., Oracle~\ref{oracle.zero}.\ref{ass:bounded noise} with $\epsilon_f=0$ (Figure~\ref{fig:oracle01}).

  In \cite{STORM,blanchet2019convergence}, the conditions on the zeroth-order oracle were significantly relaxed by assuming (in the first-order analysis) that
  \begin{equation*}
    \mathbbm{P}\left\{|f(x) - \phi(x)| \leq \kappa_{\rm ef} \delta^2\right\} \ge \bar p_0
  \end{equation*}
  for a sufficiently large $\bar p_0$, where $\kappa_{\rm ef}$ is some positive constant. 
  That is, 
  $\Scal_0=(0,\infty) \times [0, \bar p_0]$, for some sufficiently large $\bar p_0$ (see Figure~\ref{fig:oracle0popen}).  
  For the second-order analysis, the assumptions are stronger, requiring 
  \begin{equation*}
    \mathbbm{P}\left\{|f(x) - \phi(x)| \leq \kappa_{\rm ef} \delta^3 \right\} \le \bar p_0
  \end{equation*}
  to hold for some sufficiently large $\bar p_0$ and  
  \begin{equation*}
    {\mathbb{E}} \left[|f(x) - \phi(x)| \right] \leq \kappa_{F} \delta^3
  \end{equation*}
  for some $\kappa_{F}$. 
  If $\delta$ is not bounded below by a positive number, the second condition implies a zeroth-order oracle  implementable over $(0,+\infty)\times[0,1)$. 
    However, if $\delta$ is bounded below by a positive number, then these two conditions imply  weaker  conditions on ${\cal S}_0$ than  our assumptions on Oracle \ref{oracle.zero}, 
    allowing heavy tailed distributions of the error $|f(x) - \phi(x)|$. 
  Establishing algorithm complexity under the oracles in  \cite{STORM,blanchet2019convergence}  requires a different type of analysis than the one presented in this paper, which so far has not been extended to deriving a high probability complexity bound and has resulted in worse bounds on $\bar{p}_1$ and $\bar{p}_2$ as well as worse constants in the complexity bound. 

\paragraph{Gradient Norm Condition.} The fully-linear model condition is strongly tied to the TR algorithm by the use of the TR radius on the right-hand side of \eqref{eq:fully-linear} and \eqref{eq:fully-quad}. 
If, instead of the TR method, a line search method based on inexact gradient estimates $\{g(x_k)\}_{k=0,1,\dots}$ is used 
for obtaining a solution $x_*$ with $\|\nabla\phi(x_*)\|\le\epsilon$ for some $\epsilon>0$, then to establish the computational complexity, it is sufficient that the gradient estimate $g(x)$ satisfies the gradient \emph{norm condition} 
\begin{equation}\label{eq:norm-cond}
  \|g(x) - \nabla \phi(x)\| \leq \theta \|\nabla \phi(x)\|,
\end{equation} 
at all the iterates $x\in\{x_k\}_{k=0,1,\dots}$, for some $\theta \in [0,1)$ with sufficiently high probability. 
To satisfy this condition, the first order oracle needs to be implementable over $[\epsilon,+\infty)\times[0,\bar{p}_1]$ for a sufficiently large $\bar{p}_1$.
  
The norm condition \eqref{eq:norm-cond} was first introduced in~\cite{carter1991global} in the context of TR methods. 
This condition, with sufficiently small $\theta$ ensures the convergence of popular methods such as Armijo backtracking line-search.  
Verifying the gradient norm condition \eqref{eq:norm-cond} requires knowledge of $\|\nabla \phi(x_k)\|$, thus it cannot be enforced, even with some fixed probability, in the case of expected risk minimization \cite{byrd2012sample}. 
However, in some settings, where $g(x)$ is a randomized finite difference approximation of the gradient of a (possibly noisy) function \cite{nesterov2017random}, it is possible to ensure \eqref{eq:norm-cond} holds with sufficiently high probability  \cite{berahas2021theoretical}. 
  
\paragraph{Stochastic Gradient Norm Conditions.} When the norm condition cannot be ensured, one again can resort to some Taylor-like conditions. In the case of line search, however,  those have to hold in a region, whose size is a product of the step size parameter $\alpha$ and the norm of the search direction $\|g(x)\|$.   Thus, the following  conditions inspired by the concept of  fully-linear models  have been used in the literature to ensure convergence of line search methods \cite{cartis2018global,paquette2020stochastic},
\begin{equation}\label{eq:ls_grad_cond}
  |f(x) - \phi(x)| \leq \kappa_{\rm ef} \alpha^2 \|g(x)\|^2\ \text{ and } \ \|g(x) - \nabla \phi(x)\| \leq \kappa_{\rm eg} \alpha \|g(x)\|,
\end{equation}
for some nonnegative constants $(\kappa_{\rm ef},\kappa_{\rm eg})$, where $\alpha$ is a step size parameter.  
When \eqref{eq:ls_grad_cond} holds, the linear model $\phi(x) + \langle g(x), y-x\rangle$ gives an approximation of $\phi$ around $y = x - \alpha g(x)$ that is comparable to that given by the first-order Taylor expansion of $\phi$ in $B(x,\alpha\|g(x)\|)$. 
In \cite{cartis2018global,paquette2020stochastic}, complexity analyses of randomized and stochastic line search algorithms were derived under the condition that \eqref{eq:ls_grad_cond} holds with sufficiently high probability. 
A possible procedure of ensuring this conditions was outlined in \cite{cartis2018global} and in essence requires access to a stochastic first-order oracle implementable over $\Scal_1=(0,\infty) \times [0, \bar p_1]$, where $\bar p_1$ is sufficiently large. 
The zeroth-order oracle in \cite{cartis2018global}, as in \cite{bandeira2014convergence,gratton2018complexity}, is exact. 
In \cite{paquette2020stochastic}, as in \cite{blanchet2019convergence}, the zeroth-order oracle is assumed to be implementable over $(0,\infty)\times[0, \bar p_0]\subset \Scal_0$ (Figure~\ref{fig:oracle0popen}). 

Finally, an almost identical Oracle~\ref{oracle.zero} as the one in this paper is proposed in \cite{jin2021high}, while the first-order oracle is similar to our Oracle~\ref{oracle.first} but with $ \|g(x)\|$ appearing the bound on the accuracy, as in other papers on line search methods. There is no second-order analysis in \cite{jin2021high}, thus no second-order oracle is defined. 

In summary, our first- and second-order oracles are more general than those used in prior literature.
Our assumptions on the zeroth-order oracle, while not general enough to include those in \cite{STORM,blanchet2019convergence}, are relevant to practice and yet allow for strong theoretical results. In the next two settings, we discuss two common settings for the stochastic zeroth-, first-, and second-order oracles. 

\subsection{Expected risk minimization}
In this setting, $\phi(x)=\E_{d \sim \mathcal{D}}[l(x,d)]$, where $x$ are the model parameters, $d$ is a data sample following distribution $\mathcal{D}$, and $l(x,d):\mathbb{R}^n \rightarrow \mathbb{R}$ is the loss function of the $d$-th data point parametrized by $x$. 
The zeroth- and first-order oracles estimates are obtained by sample averages of the loss function and its gradient, respectively, over $\mathcal{B}$ (a mini-batch sampled from $\mathcal{D}$), i.e.,  
\begin{equation}\label{eq:erm_oracle}
	f_\mathcal{B}(x) = \frac{1}{|\mathcal{B}|}\sum_{d\in \mathcal{B}}l(x,d)\ \text{ and } \ g_\mathcal{B}(x) = \frac{1}{|\mathcal{B}|}\sum_{d\in \mathcal{B}}\nabla_x l(x,d).
\end{equation}

In \cite{jin2021high} it is shown that the conditions of Oracle~\ref{oracle.zero}.\ref{ass:subexponential noise} are satisfied for any $x$ for which $l(x,d)$ has a subexponential  distribution 
(e.g., when the support of $\mathcal{D}$ is bounded and $l$ is Lipschitz) by selecting an appropriate sample size  $|\mathcal{B}|$. 

 Let us show how  Oracle~\ref{oracle.first}  is easily implemented in this setting and explain the roles of $\epsilon_g$ and $\kappa_{\rm eg}$.
Assume that  the variance of stochastic gradient is bounded, $\E_{d\sim \mathcal{D}} \left[\|\nabla l(x,d)-\nabla\phi(x)\|^2 \right]\le \sigma^2$.
Given input $\delta^{(1)}$  and $p_1$, choose random i.i.d. mini-batch $\mathcal{B}$ whose size is at least  $\min\{N,\left((1-p_1)\delta^{(1)}\right)^{-2}\}$, where $N$ is the maximum possible mini-batch size that the oracle can generate. Then, 
we have
\[ \E_{\mathcal{B}} \left[\|g_{\mathcal{B}}(x)-\nabla\phi(x)\| \right]
\le \sqrt{\E_{\mathcal{B}} \left[\|g_{\mathcal{B}}(x)-\nabla\phi(x)\|^2 \right]}
\le \max \left\{\frac{\sigma}{\sqrt{N}}, \sigma (1-p_1)\delta^{(1)} \right\},
\] 
which by Markov inequality implies  
\begin{equation} \label{eq:ERM Markov} \begin{aligned} 
  &\mathbbm{P} \left\{ \|g_\mathcal{B}(x)-\nabla\phi(x)\| \le \frac{\sigma}{(1-p_1)\sqrt{N}} + \sigma\delta^{(1)} \right\} \\
  &\ge \mathbbm{P} \left\{ \|g_\mathcal{B}(x)-\nabla\phi(x)\| \le \max \left\{ \frac{\sigma}{(1-p_1)\sqrt{N}}, \sigma\delta^{(1)} \right\}\right\} 
  \ge p_1. 
\end{aligned} \end{equation}
Thus, we have Oracle~\ref{oracle.first} with input $\delta^{(1)}$ and $p_1$, with $\epsilon_g =\frac{\sigma}{(1-p_1)\sqrt{N}}$ and $\kappa_{\rm eg} =\sigma$. 
We note that $\epsilon_g$ and $\kappa_{\rm eg}$ need not be known for the execution of Algorithm~\ref{alg:tr}, but Algorithm~\ref{alg:tr2} requires an estimate of $\epsilon_g$.

\subsection{Gradient and Hessian approximation via zeroth-order oracle}

Let us consider the setting in which only the zeroth-order oracle is available for the objective function. 
This zeroth-order oracle can come from a variety of settings, such as simulation-based optimization, machine learning, or solutions for complex systems~\cite{DFObook}. Both cases, Oracle~\ref{oracle.zero}.\ref{ass:bounded noise} and  Oracle~\ref{oracle.zero}.\ref{ass:subexponential noise}, have numerous applications. 
Let us now discuss how Oracle~\ref{oracle.first} and Oracle~\ref{oracle.second} can be implemented using only function estimates via finite differences. 

Consider the following first-order oracle: given $x\in\mathbb{R}^n$, choose $\sigma>0$ and compute $f(y)$ for all $y$ in the set ${\cal Y}=\{x\}\cup\{x+\sigma u_i\}_{i=1}^n$ using Oracle~\ref{oracle.zero}, where $u_i$, $i=1,\dots,n$, denotes the unit vector along the $i$-th coordinate. 
Compute $g(x)$ as follows
\begin{equation}\label{eq:ffd1}
g(x) = \sum_{i=1}^n \frac{f(x+\sigma u_i)  -f(x)}{\sigma} u_i. 
\end{equation}

The following proposition holds. 
 \begin{proposition}[\cite{berahas2021theoretical} Theorem 2.1]\label{prop.finite_diff1}
 Assume that $|f(y)-\phi(y)|\leq \hat \epsilon_f$ for all $y\in\cal Y$. Then, under Assumption~\ref{assum:lip_cont} 
\begin{equation}\label{eq:ffd_bound1} 
\|g(x) - \nabla\phi(x)\| \le \frac{\sqrt{n} L_1 \sigma}{2}+ \frac{\sqrt{n} \hat \epsilon_f}{\sigma}. 
\end{equation}
\end{proposition}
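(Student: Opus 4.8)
The plan is to bound the gradient-approximation error one coordinate at a time and then aggregate via the Euclidean norm. Writing $e(y) = f(y) - \phi(y)$ for the zeroth-order noise, the $i$-th component of $g(x) - \nabla\phi(x)$ is
\[
\frac{f(x+\sigma u_i) - f(x)}{\sigma} - \langle \nabla\phi(x), u_i\rangle ,
\]
which I would split, by adding and subtracting $\phi$ evaluated at the two sample points $x$ and $x+\sigma u_i$, into a deterministic \emph{truncation} term
\[
\frac{\phi(x+\sigma u_i) - \phi(x)}{\sigma} - \langle\nabla\phi(x), u_i\rangle
\]
and a \emph{noise} term $\dfrac{e(x+\sigma u_i) - e(x)}{\sigma}$.

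For the truncation term I would use the integral form of the first-order Taylor remainder, $\phi(x+\sigma u_i) - \phi(x) = \int_0^1 \langle \nabla\phi(x + t\sigma u_i), \sigma u_i\rangle\, dt$; subtracting $\sigma\langle\nabla\phi(x), u_i\rangle$, applying Cauchy--Schwarz, and invoking the $L_1$-Lipschitz continuity of $\nabla\phi$ (Assumption~\ref{assum:lip_cont}) gives the per-coordinate bound $\int_0^1 L_1 t\sigma\, dt = \tfrac12 L_1\sigma$. For the noise term, the triangle inequality together with the hypothesis $|e(y)|\le\hat\epsilon_f$ on $\mathcal{Y}$ yields a per-coordinate bound proportional to $\hat\epsilon_f/\sigma$. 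Collecting these two estimates, each of the $n$ components of $g(x)-\nabla\phi(x)$ is controlled by $\tfrac12 L_1\sigma + O(\hat\epsilon_f/\sigma)$, so summing the squared component bounds and taking the square root introduces the factor $\sqrt{n}$ and produces \eqref{eq:ffd_bound1}.

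There is no real obstacle here; the argument is a routine decomposition into bias and noise contributions. The only point requiring care is the truncation estimate: since Assumption~\ref{assum:lip_cont} only guarantees Lipschitz continuity of the gradient rather than twice-differentiability, one must use the integral (or mean-value) form of the remainder instead of a second-order Taylor expansion, and this is precisely what produces the $\tfrac12 L_1\sigma$ constant. The $\sqrt{n}$ degradation is intrinsic to the coordinate-wise forward-difference scheme and reflects that the estimate combines $n+1$ separate, individually noisy function evaluations.
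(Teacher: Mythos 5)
Your decomposition into a truncation (bias) term and a noise term, with the integral form of the first-order Taylor remainder handling the bias under Assumption~\ref{assum:lip_cont}, is exactly the standard argument; the paper itself does not reprove this proposition (it cites \cite{berahas2021theoretical}), but the appendix proof of the second-order analogue, Proposition~\ref{prop.finite_diff2}, uses the same per-coordinate estimate followed by the $\sqrt{n\,(\cdot)^2}$ aggregation, so your route is the intended one. The truncation bound $\int_0^1 L_1 t\sigma\,dt = \tfrac12 L_1\sigma$ per coordinate and the $\sqrt{n}$ aggregation are both correct.

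The one genuine gap is the constant on the noise term, which you hide behind ``proportional to'' and ``$O(\hat\epsilon_f/\sigma)$''. The $i$-th noise contribution is $\bigl|e(x+\sigma u_i)-e(x)\bigr|/\sigma$, and the only available estimate under the hypothesis $|e(y)|\le\hat\epsilon_f$ on $\mathcal Y$ is the triangle inequality, giving $2\hat\epsilon_f/\sigma$ per coordinate and hence $\tfrac{\sqrt{n}L_1\sigma}{2}+\tfrac{2\sqrt{n}\hat\epsilon_f}{\sigma}$ after aggregation. That is weaker than \eqref{eq:ffd_bound1} as printed, which has coefficient $1$ on $\sqrt{n}\hat\epsilon_f/\sigma$; there is no way to improve the triangle inequality here, since the errors at $x$ and $x+\sigma u_i$ can have opposite signs. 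In fact the paper's own interpolation bound a few lines later, and Theorem~2.1 of \cite{berahas2021theoretical}, both carry the factor $2$, so the statement of Proposition~\ref{prop.finite_diff1} almost certainly has a typo; but as written, your argument proves a bound that is a factor of $2$ worse on the noise term than the claimed inequality, and you should either state that explicitly or flag the discrepancy rather than absorb it into an $O(\cdot)$.
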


Now consider the following second-order oracle: given $x\in\mathbb{R}^n$, choose $\sigma>0$ and compute $f(y)$ for all $y$ in the set ${\cal Y}=\{x\}\cup\{x+\sigma u_i\}_{i=1}^n \cup \{x+\sigma u_i+\sigma u_j\}_{i,j=1}^n$ using Oracle~\ref{oracle.zero}. Compute $H(x)$ as follows 
\begin{equation}\label{eq:ffd2}
H(x) = \sum_{i=1}^n \sum_{j=1}^n \frac{f(x+\sigma u_i+\sigma u_j) - f(x+\sigma u_i) - f(x+\sigma u_j) + f(x)}{\sigma^2} u_i u_j^\intercal,
\end{equation}
The following proposition holds. 
\begin{proposition}\label{prop.finite_diff2}
 Assume that $|f(y)-\phi(y)|\leq \hat \epsilon_f$ for all $y\in\cal Y$. Then, under Assumption~\ref{ass:lip_cont2} 
\begin{equation}\label{eq:ffd_bound2} 
\|H(x) - \nabla^2\phi(x)\| \le \frac{(\sqrt{2}+1) n L_2 \sigma}{3} + \frac{4n\hat \epsilon_f}{\sigma^2}. 
\end{equation}
\end{proposition}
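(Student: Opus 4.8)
The plan is to follow the same two-part decomposition used for Proposition~\ref{prop.finite_diff1}: separate the total error $\|H(x)-\nabla^2\phi(x)\|$ into a \emph{discretization} (truncation) error, present even with exact function values, and a \emph{noise} error caused by replacing $\phi$ by $f$ on $\cal Y$. To this end I introduce the noiseless finite-difference Hessian
\begin{equation*}
H^\phi(x) = \sum_{i=1}^n\sum_{j=1}^n \frac{\phi(x+\sigma u_i+\sigma u_j)-\phi(x+\sigma u_i)-\phi(x+\sigma u_j)+\phi(x)}{\sigma^2}\,u_i u_j^\intercal ,
\end{equation*}
and bound the two pieces of $\|H(x)-\nabla^2\phi(x)\|\le \|H(x)-H^\phi(x)\|+\|H^\phi(x)-\nabla^2\phi(x)\|$ in the matrix $2$-norm separately.

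For the noise term, observe that $H(x)-H^\phi(x)$ has $(i,j)$ entry equal to the second finite difference of the error $e=f-\phi$ over the same stencil. Since $|e(y)|\le\hat\epsilon_f$ for all $y\in\cal Y$ by hypothesis, each such entry is at most $4\hat\epsilon_f/\sigma^2$ in modulus (the four evaluations, or three when $i=j$). I would then convert this entrywise bound into a spectral-norm bound through the Frobenius norm, using $\|M\|\le\|M\|_F\le n\max_{i,j}|M_{ij}|$ for an $n\times n$ matrix; this produces the term $4n\hat\epsilon_f/\sigma^2$ in \eqref{eq:ffd_bound2}.

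For the truncation term I would compare $H^\phi(x)$ with $\nabla^2\phi(x)=\sum_{i,j}\big(u_i^\intercal\nabla^2\phi(x)u_j\big)u_iu_j^\intercal$ entry by entry. Fixing $i\ne j$ and Taylor-expanding each of the four values about $x$ to second order with a remainder, the zeroth- and first-order terms cancel in the second difference, the second-order terms combine---using the symmetry of $\nabla^2\phi(x)$---to reproduce exactly $\sigma^2 u_i^\intercal\nabla^2\phi(x)u_j$, and only three cubic remainders survive, at displacements of norm $\sqrt2\,\sigma,\sigma,\sigma$. Assumption~\ref{ass:lip_cont2} bounds each remainder by $\tfrac{L_2}{6}\|h\|^3$, so the three sum to $\tfrac{L_2}{6}(2\sqrt2+2)\sigma^3=\tfrac{(\sqrt2+1)}{3}L_2\sigma^3$; dividing by $\sigma^2$ gives the per-entry bound $\tfrac{(\sqrt2+1)}{3}L_2\sigma$, and the Frobenius route again supplies the factor $n$, yielding the first term of \eqref{eq:ffd_bound2}. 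The triangle inequality then assembles the two estimates.

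The step I expect to be most delicate is the passage from entrywise to spectral-norm bounds together with the constant bookkeeping, and in particular the diagonal entries. When $i=j$ the stencil collapses to the forward second difference $\phi(x+2\sigma u_i)-2\phi(x+\sigma u_i)+\phi(x)$, whose surviving displacement has norm $2\sigma$; the corresponding per-entry truncation bound is therefore larger than the off-diagonal value $\tfrac{(\sqrt2+1)}{3}L_2\sigma$, so obtaining the clean constant in \eqref{eq:ffd_bound2} requires either applying the off-diagonal bound uniformly or splitting off the (diagonal) part of the error matrix and controlling it separately, since a diagonal matrix contributes only its largest entry to the $2$-norm. Verifying that this diagonal contribution does not degrade the stated constant is the part of the argument I would check most carefully.
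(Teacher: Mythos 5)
Your decomposition into a truncation error and a noise error, the cubic Taylor remainder under Assumption~\ref{ass:lip_cont2}, and the passage from entrywise bounds to the spectral norm via $\|M\|\le\|M\|_F\le n\max_{i,j}|M_{ij}|$ are exactly the steps of the paper's proof, so your approach is essentially the same (the paper merely folds the noise term into the same chain of inequalities rather than introducing a separate noiseless matrix $H^\phi$). The one place you go beyond the paper is the diagonal, and your suspicion there is justified: the paper applies the displacement bound $\|\sigma u_i+\sigma u_j\|=\sqrt{2}\,\sigma$ uniformly over all $(i,j)$, including $i=j$, where the stencil point is $x+2\sigma u_i$ with displacement $2\sigma$. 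For those entries the per-entry truncation bound is $\tfrac{L_2}{6}\,(8\sigma^3+2\sigma^3)/\sigma^2=\tfrac{5}{3}L_2\sigma$, which exceeds the off-diagonal value $\tfrac{\sqrt{2}+1}{3}L_2\sigma$, and neither applying the off-diagonal bound "uniformly" (which is simply false for $i=j$) nor splitting off the diagonal part (which adds $\max_i|D_{ii}|$ to the $2$-norm and does not fit under the stated constant for any $n$) rescues the clean coefficient in \eqref{eq:ffd_bound2}. So the verification you flag as delicate does not in fact go through as stated; this is a (minor, constant-level) flaw inherited from the paper's own proof, which a rigorous version would fix by carrying the larger diagonal constant or by using a different stencil on the diagonal. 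Your noise bound of $4\hat\epsilon_f/\sigma^2$ is fine on the diagonal, since the coefficients $1,-2,1$ still sum to $4$ in absolute value.
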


First let us assume that Oracle~\ref{oracle.zero}.\ref{ass:bounded noise} is used. Then, Propositions \ref{prop.finite_diff1} and \ref{prop.finite_diff2} apply with $\hat \epsilon_f=\epsilon_f$ with probability $1$. 
Hence, by selecting $\sigma$ that minimizes the right-hand sides of \eqref{eq:ffd_bound1}, the finite difference formula \eqref{eq:ffd1} gives us Oracle~\ref{oracle.first} with $\epsilon_g=\sqrt{\frac{nL_1\epsilon_f}{2}}$, $\kappa_{\rm eg}=0$ for any $\delta^{(1)} > 0$ and for $p_1=1$. 
Similarly, by choosing $\sigma$ to minimize the right-hand side of \eqref{eq:ffd_bound2}, we obtain from formula \eqref{eq:ffd2} an Oracle~\ref{oracle.second} with $\epsilon_H=\sqrt[3]{3(\sqrt{2}-1)^2\epsilon_f L_2^2}$, $\kappa_{\rm eh}=0$ for any $\delta^{(2)} > 0$ and for $p_2=1$.
 
Next, let us consider the case of Oracle~\ref{oracle.zero}.\ref{ass:subexponential noise}. It is not guaranteed that  $|f(y)-\phi(y)|\leq \hat \epsilon_f$ for all 
$y\in {\cal Y}$. However, for any $\hat \epsilon_f >\epsilon_f$, 
we have that  for any $y$, $|f(y)-\phi(y)|\leq \hat\epsilon_f$ with probability at least $1-e^{a(\epsilon_f-\hat \epsilon_f)}$. 
Thus, with probability at least $(1-e^{a(\epsilon_f-\hat\epsilon_f)})^{n+1}$ it holds that $|f(y)-\phi(y)|\leq \hat \epsilon_f$ for all 
$y \in {\cal Y}$ defined in the first-order oracle. 
Proposition~\ref{prop.finite_diff1} implies that the first-order oracle defined above delivers Oracle~\ref{oracle.first} with $\epsilon_g=\sqrt{\frac{nL_1\hat \epsilon_f}{2}}$, $\kappa_{\rm eg}=0$ for any $\delta^{(1)}>0$ and for $p_1=(1-e^{a(\epsilon_f-\hat\epsilon_f)})^{n+1}$. 
Similarly,  $|f(y)-\phi(y)|\leq \hat\epsilon_f$ for all $y\in {\cal Y}$, defined by the second-order oracle,  
with probability at least $(1-e^{a(\epsilon_f-\hat\epsilon_f)})^{(n+1)(n+2)/2}$, and thus we have Oracle~\ref{oracle.second} 
  with $\epsilon_H=\sqrt[3]{3(\sqrt{2}-1)^2\hat \epsilon_f L_2^2}$, $\kappa_{\rm eh}=0$ for any
 $\delta^{(2)} > 0 $ and for $p_2=(1-e^{a(\epsilon_f-\hat\epsilon_f)})^{(n+1)(n+2)/2}$.

Now, let us consider a first-order oracle based on polynomial interpolation. 
Specifically, for a given $x$ choose  $\sigma>0$ and a linearly independent set of vectors $\{u_i\}_{i=1}^n$,  such that $\|u_i\|\le 1$.  
Compute $f(y)$ for all $y$ in the set ${\cal Y}=\{x\}\cup\{x+\sigma u_i\}_{i=1}^n$ using Oracle~\ref{oracle.zero}. Let  ${\cal U}\in\mathbb{R}^{n\times n}$ denote matrix whose rows are $\{u_i^\intercal\}_{i=1}^n$ and let $\tilde u_i$ denote the columns of  ${\cal U}^{-1}$. 
 Compute 
 \[
 g(x)= \sum_{i=1}^n \frac{f(x+\sigma u_i)  -f(x)}{\sigma} \tilde u_i. 
 \]

It is shown in \cite{berahas2021theoretical} that if $|f(y)-\phi(y)|\leq \hat \epsilon_f$, for all $y\in {\cal Y}$ and under  Assumption~\ref{assum:lip_cont} the  following bound holds. 
\[ 
 \|g(x) - \nabla\phi(x)\|\le \|\mathcal{U}^{-1}\| \left[ \frac{\sqrt{n}L_1\sigma}{2} + \frac{2\sqrt{n}\epsilon_f}{\sigma} \right], 
\]
Using arguments similar to those used above for finite differences, one can easily show  that the interpolation oracle provides Oracle~\ref{oracle.first} with appropriately
chosen $\epsilon_g$ and $\kappa_{\rm eg}=0$ and for any $\delta^{(1)} > 0$. The additional nuance of this case is the choice of ${\cal U}$, so that  $\|{\cal U}^{-1}\|$ is bounded from above either deterministically or probabilistically, depending on an algorithm employed, thus $p^{(1)}$ of this Oracle~\ref{oracle.first} is defined according to the choice of  ${\cal U}$, the instance of Oracle~\ref{oracle.zero} employed.

 Similarly, Oracle~\ref{oracle.second} can be implemented using techniques such as quadratic interpolation by choosing an appropriate sample set. 
How to choose such a sample set involves the convoluted concept of {\it poisedness} which is out of the scope of this paper. 
We refer interested readers to \cite{DFObook,UOBYQA,powell2001lagrange}. 
In \cite{bandeira2014convergence} a stochastic second-order oracle is generated by using quadratic interpolation a  randomly sampled set, which allows
for a more efficient second-order oracle, when $\nabla \phi(x)$ is  approximately sparse.
A further discussion of stochastic oracles in \cite{bandeira2014convergence} and other settings that fit our generic oracle definition is a worthwhile topic, but is beyond the scope of this paper. 
 
\section{Trust-region algorithms for noisy optimization}
\label{sec:algorithm}
In this section, we propose first- and second-order modified TR algorithms which utilize the stochastic oracles discussed in Section~\ref{sec:oracles} to produce models of the objective function.  We also define the requirements on these models and  derive some key properties of both algorithms under these requirements. We finish the section by describing the algorithms as stochastic processes which we then analyze in subsequent sections. 

\subsection{Algorithms}
In every iteration $k \in \{0,1,\dots\}$ of our modified first- and second-order TR algorithms, a quadratic model 
\begin{equation}\label{eq:model_def}
	m_k(x_k+s) = \phi(x_k) + \langle g_k, s \rangle + \frac{1}{2} \langle H_k s, s \rangle
\end{equation}
is constructed to approximate the objective function near the iterate $x_k$. 
The constant term $\phi(x_k)$ appears in \eqref{eq:model_def} for clarity, but is not needed in the algorithm, since only changes in the model value 
$m_k(x_k)-m_k(x_k+s)$  are computed.  
The model gradient $g_k = \nabla m_k(x_k)$ is computed as a (random) approximation of $\nabla \phi(x_k)$ by the first-order oracle (Oracle \ref{oracle.first}) with a specified accuracy and reliability for the iterate $x_k$. 
The model Hessian $H_k = \nabla^2 m_k(x_k)$ can be a quasi-Newton matrix or other (not necessarily random or accurate) approximation of $\nabla^2 \phi(x_k)$, that satisfies the following standard assumption \cite{TRbook}. 
\begin{assumption} \label{assum:bhm}
For all $k=0,1,2,\dots$, $\| H_k\| \leq \kappa_{\rm bhm}$, where $\kappa_{\rm bhm}$ is a positive constant.
\end{assumption}

Our modified first-order TR method is stated in Algorithm~\ref{alg:tr}. 
In the execution of Algorithm~\ref{alg:tr} (Line~\ref{step.s}) it is required that the TR subproblem, defined as 
\begin{equation}\label{eq:tr_sub}
	\min_{s \in B(x_k,\delta_k)} \;\; m_k(x_k + s),
\end{equation}
where $B(x_k,\delta_k)$ is a Euclidean ball with center $x_k$ and radius $\delta_k>0$, is consistently solved accurately enough so that the step $s_k$ satisfies 
\begin{equation}\label{eq:Cauchy decrease}
    m_k(x_k) - m_k(x_k + s_k) \ge \frac{\kappa_{\rm fcd}}{2} \|g_k\| \min\left\{ \frac{\|g_k\|}{\|H_k\|}, \delta_k\right\} ,
\end{equation}
for some (chosen) constant $\kappa_{\rm fcd} \in (0,1]$\footnote{``fcd" stands for ``\underline{f}raction of \underline{C}auchy \underline{d}ecrease''. }. 
Condition \eqref{eq:Cauchy decrease} is commonly used in the literature and is satisfied by the Cauchy point with $\kappa_{\rm fcd} = 1$. 
See \cite[Section 6.3.2]{TRbook} for more details. 

\begin{algorithm}[ht]
\caption{~\textbf{Modified First-Order Trust-Region Algorithm}}
\label{alg:tr}
{\bf Inputs:} starting point $x_0$; initial TR radius $\delta_0 > 0$;  
hyperparameters for controlling the TR radius $\eta_1 > 0,\eta_2 > 0$, $\gamma \in (0,1)$; tolerance parameter $r>0$; and probability parameter $p_1$.\\
\For{$k=0,1,2,\dots$} {
	\nl Compute vector $g_k$ using stochastic Oracle \ref{oracle.first}  with input $(\delta_k, p_1, x_k)$ and matrix $H_k$ that satisfies Assumption~\ref{assum:bhm}. \\
	\nl Compute $s_k$ by approximately minimizing $m_k$ in $B(x_k,\delta_k)$ so that it satisfies \eqref{eq:Cauchy decrease}. \label{step.s}\\
	\nl Compute $f_k$ using Oracle~\ref{oracle.zero}, with input $x_k$, and $f_k^{+}$  using Oracle~\ref{oracle.zero}, with input $x_k+s_k$, and then compute
	\[ \rho_k = \frac{f_k - f_k^+ + r}{m_k(x_k) - m_k(x_k + s_k)}.
	\] \\
	\nl \If{$\rho_k \geq \eta_1$}{Set $x_{k+1} = x_k + s_k$ and 
	\[  \delta_{k+1} = \begin{cases}
	    \gamma^{-1} \delta_k \quad & \text{if } \|g_k\| \geq \eta_2 \delta_k \\
		\gamma \delta_k, \quad & \text{if } \|g_k\| < \eta_2 \delta_k.	
	\end{cases} \] }
	\nl \Else{ Set $x_{k+1} = x_k $ and $\delta_{k+1} =  \gamma \delta_k$.}
}
\end{algorithm}

Algorithm~\ref{alg:tr2} is our modified second-order TR algorithm. Similar to Algorithm~\ref{alg:tr}, in the execution of Algorithm~\ref{alg:tr2} (Line~\ref{step.s2}) the TR subproblem \eqref{eq:tr_sub} needs to be solved sufficiently accurately, and the step $s_k$ computed needs to satisfy the following stronger condition
\begin{align}\label{eq:Eigen decrease}
    m_k(x_k) - m_k(x_k + s_k) \geq \frac{\kappa_{\rm fod}}{2} \max \left\{ \|g_k\| \min\left\{\frac{\|g_k\|}{\|H_k\|}, \delta_k\right\}, -\lambda_{\rm min} (H_k) \delta_k^2 \right\}, 
\end{align}
for some (chosen) constant $\kappa_{\rm fod} \in (0,1]$\footnote{``fod" stands for ``\underline{f}raction \underline{o}f \underline{d}ecrease''. }. 
Contrary to Algorithm~\ref{alg:tr}, the Hessian approximations $H_k$ in Algorithm~\ref{alg:tr2} are required to be sufficiently accurate and not just bounded in norm. 
Furthermore, instead of comparing $\|g_k\|$ to $\eta_2 \delta_k$ to determine the adjustment of the TR radius, the following value is used: 
\begin{equation}\label{eq:beta_k_m}
    \beta^m_k \stackrel{\rm def}{=} \max \left\{ \|g_k\|, -\lambda_{\rm min} (H_k) \right\}. 
\end{equation}

\begin{algorithm}[ht]
\caption{~\textbf{Modified Second-Order Trust-Region Algorithm}}
\label{alg:tr2}
{\bf Inputs:} starting point $x_0$; initial TR radius $\delta_0 > 0$;  
hyperparameters for controlling the TR radius $\eta_1 > 0,\eta_2 > 0$, $\gamma \in (0,1)$; tolerance parameter $r>0$; and probability parameters $p_1$ and $p_2$.\\
\For{$k=0,1,2,\dots$} {
	\nl  Compute vector $g_k$ using stochastic Oracle \ref{oracle.first}   with input $(\delta_k^2, p_1, x_k)$  and matrix $H_k$ using stochastic Oracle \ref{oracle.second}  with input $(\delta_k, p_2, x_k)$. \\
	\nl Compute $s_k$ by approximately minimizing $m_k$ in $B(x_k,\delta_k)$ so that it satisfies \eqref{eq:Eigen decrease}. \label{step.s2}\\
	\nl Compute $f_k$ using Oracle~\ref{oracle.zero}, with input $x_k$, and $f_k^{+}$  using Oracle~\ref{oracle.zero}, with input $x_k+s_k$, and then compute
	\[ \rho_k = \frac{f_k - f_k^+ + r}{m_k(x_k) - m_k(x_k + s_k)}.\] \\
	\nl \If{$\rho_k \geq \eta_1$}{Set $x_{k+1} = x_k + s_k$ and using  $\beta^m_k$  defined in \eqref{eq:beta_k_m}
	\[  \delta_{k+1} = \begin{cases} 
	    \gamma^{-1} \delta_k \quad & \text{if } \beta^m_k \ge \eta_2 \delta_k \\
		\gamma \delta_k, \quad & \text{if } \beta^m_k < \eta_2 \delta_k.
	\end{cases} \] }
	\nl \Else{ Set $x_{k+1} = x_k$ and $\delta_{k+1} = \gamma \delta_k$.}
}
\end{algorithm}

We also note that if by any chance $m_k(x_k) = m_k(x_k + s_k)$ in any iteration of either of these two algorithms, the step $s_k$ is automatically rejected, i.e., $x_{k+1}=x_k$ and $\delta_{k+1}=\gamma\delta_k$.

\begin{remark} Algorithms~\ref{alg:tr} and \ref{alg:tr2} are very similar to classical TR algorithms \cite{TRbook}. The major difference pertains to the fact that the step acceptance criterion is relaxed. 
The relaxation is similar to that in \cite{berahas2021global,berahas2019derivative,jin2021high} for line/step search methods. A user-defined tolerance parameter is added to the numerator in order to account for the noise in the zeroth-order oracle. 
The value of $r$ in Algorithm~\ref{alg:tr} is set to $2\epsilon_f$ if $\epsilon_f$ is known (for example when the zeroth-order oracle satisfies Oracle~\ref{oracle.zero}.\ref{ass:bounded noise} with a known noise bound); otherwise, we simply let $r$ be any value large enough to be no less than $2\epsilon_f$. 
Similarly, the value of $r$ in Algorithm~\ref{alg:tr2} is set to $2\epsilon_f + \epsilon_g^{3/2}$ if both $\epsilon_f$ and $\epsilon_g$ are known, and set to any large enough value otherwise. 
The effect of choosing particular values of $r$ will be discussed later in the paper. 
\end{remark}

\subsection{Approximation model accuracy}
\label{sec:fully}

Let us introduce a  definition of a {\em sufficiently accurate}  model. 
\begin{definition}
An approximation model of the form \eqref{eq:model_def} is said to be \textbf{first-order sufficiently accurate} if there are nonnegative constants $\kappa_{\rm eg}$ and $\epsilon_g$ such that 
\begin{equation} \label{eq:sufficiently accurate g}
    \| \nabla \phi (x_k) - g_k \| \le \kappa_{\rm eg} \delta_k + \epsilon_g, 
\end{equation}
and, is said to be \textbf{second-order sufficiently accurate} 
if there are nonnegative constants $\kappa_{\rm eg}, \kappa_{\rm eh}, \epsilon_g$ and $\epsilon_H$ such that

\begin{subequations} \label{eq:sufficiently accurate H g} \begin{align}
    \| \nabla^2\phi(x_k) - H_k \| &\le \kappa_{\rm eh} \delta_k + \epsilon_H \label{eq:sufficiently accurate H g: H} \\
    \|\nabla\phi(x_k) - g_k \| &\le \kappa_{\rm eg} \delta_k^2 + \epsilon_g. \label{eq:sufficiently accurate H g: g}
\end{align} \end{subequations}
\end{definition}

Note that \eqref{eq:sufficiently accurate g} is satisfied with probability $p_1$ by Oracle~\ref{oracle.first} with input $(\delta_k,p_1,x_k)$, \eqref{eq:sufficiently accurate H g: g} is satisfied with probability $p_1$ by Oracle~\ref{oracle.first} with input $(\delta^2_k,p_1,x_k)$ and \eqref{eq:sufficiently accurate H g: H} is satisfied with probability $p_2$ by Oracle~\ref{oracle.second} with input $(\delta_k,p_2,x_k)$.

Under \eqref{eq:sufficiently accurate g} (resp.,  \eqref{eq:sufficiently accurate H g}), error bounds on the model accuracy can be derived. 
The first lemma below provides a bound on the approximation error of the model in $B(x_k,\delta_k)$ under \eqref{eq:sufficiently accurate g}.
\begin{lemma} \label{lem:phi-m}
Under Assumptions~\ref{assum:lip_cont} and \ref{assum:bhm}, if \eqref{eq:sufficiently accurate g} holds, it follows
\begin{equation} \label{eq:phi-m}
    |\phi(x_k+s) - m_k(x_k+s)| \le (L_1 + \kappa_{\rm bhm} + 2\kappa_{\rm eg }) \delta_k^2 /2 + \epsilon_g \delta_k
\end{equation}
for all $x_k + s \in B(x_k,\delta_k)$. 
\end{lemma}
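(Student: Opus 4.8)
The plan is to expand the model error around $x_k$ and control it term by term. Using the definition \eqref{eq:model_def} of $m_k$, I would first write
\[
\phi(x_k+s) - m_k(x_k+s) = \big[\phi(x_k+s) - \phi(x_k) - \langle g_k, s\rangle\big] - \tfrac12\langle H_k s, s\rangle,
\]
and then invoke the integral (fundamental-theorem-of-calculus) form of the first-order expansion, $\phi(x_k+s) - \phi(x_k) = \int_0^1 \langle \nabla\phi(x_k+ts),\, s\rangle\,dt$, to rewrite the bracketed quantity as $\int_0^1 \langle \nabla\phi(x_k+ts) - g_k,\, s\rangle\,dt$. The key manipulation is to add and subtract $\nabla\phi(x_k)$ inside the inner product, splitting this into a \emph{smoothness} part $\int_0^1 \langle \nabla\phi(x_k+ts) - \nabla\phi(x_k),\, s\rangle\,dt$ and a \emph{gradient-inaccuracy} part $\langle \nabla\phi(x_k) - g_k,\, s\rangle$.

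Next I would bound the three resulting pieces separately. For the smoothness part, Assumption~\ref{assum:lip_cont} gives $\|\nabla\phi(x_k+ts)-\nabla\phi(x_k)\| \le L_1 t\|s\|$, so Cauchy--Schwarz together with $\int_0^1 t\,dt = \tfrac12$ yields a bound of $\tfrac{L_1}{2}\|s\|^2$. For the gradient-inaccuracy part, Cauchy--Schwarz together with the first-order sufficient-accuracy hypothesis \eqref{eq:sufficiently accurate g} gives $(\kappa_{\rm eg}\delta_k + \epsilon_g)\|s\|$. For the Hessian part, Assumption~\ref{assum:bhm} and the operator-norm inequality give $\tfrac12\|H_k\|\,\|s\|^2 \le \tfrac{\kappa_{\rm bhm}}{2}\|s\|^2$.

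Finally I would use the trust-region constraint $x_k+s \in B(x_k,\delta_k)$, i.e.\ $\|s\|\le\delta_k$, to replace $\|s\|^2$ by $\delta_k^2$ and $\|s\|$ by $\delta_k$ in each bound, and collect terms via the triangle inequality: the two quadratic contributions combine into $\tfrac{L_1+\kappa_{\rm bhm}+2\kappa_{\rm eg}}{2}\delta_k^2$ and the linear contribution becomes $\epsilon_g\delta_k$, which is exactly \eqref{eq:phi-m}. There is no genuine obstacle here, since this is the standard Taylor-remainder estimate for quadratic trust-region models; the only point requiring care is the bookkeeping of constants, in particular tracking the factor of $2$ on $\kappa_{\rm eg}$, which arises because the gradient-error term contributes $\kappa_{\rm eg}\delta_k\cdot\|s\| \le \kappa_{\rm eg}\delta_k^2$ --- a full $\delta_k^2$ rather than the $\tfrac12\delta_k^2$ produced by the integral and Hessian terms.
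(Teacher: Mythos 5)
Your proposal is correct and follows essentially the same route as the paper's proof: the same three-way split into the first-order Taylor remainder (bounded by $L_1\|s\|^2/2$ via Assumption~\ref{assum:lip_cont}), the gradient-inaccuracy term $\|\nabla\phi(x_k)-g_k\|\,\|s\|$ controlled by \eqref{eq:sufficiently accurate g}, and the Hessian term controlled by Assumption~\ref{assum:bhm}, followed by $\|s\|\le\delta_k$. The only cosmetic difference is that you make the integral form of the Taylor remainder explicit where the paper invokes the standard bound directly.
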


\begin{proof}
By the triangle inequality, Assumptions~\ref{assum:lip_cont} and \ref{assum:bhm} and \eqref{eq:sufficiently accurate g}, it follows that,
\[ \begin{aligned}
    &|\phi(x_k+s) - m_k(x_k+s)| \\
    &= \left| \phi(x_k + s)  - \phi(x_k) - \langle g_k, s \rangle - \langle H_k s, s \rangle /2 \right| \\
    &\le \left| \phi(x_k + s)  - \phi(x_k) - \langle \nabla \phi(x_k), s \rangle \right| + \left| \langle \nabla \phi(x_k), s \rangle - \langle g_k, s \rangle \right| + \left|\langle H_k s, s \rangle /2 \right| \\
    &\le L_1 \|s\|^2 /2 + \|\nabla\phi(x_k) - g_k\| \|s\| + \kappa_{\rm bhm} \|s\|^2 /2 \\
    &\le L_1 \delta_k^2 /2 + (\kappa_{\rm eg} \delta_k  + \epsilon_g) \delta_k + \kappa_{\rm bhm} \delta_k^2 /2 \\
    &= (L_1 + \kappa_{\rm bhm} + 2\kappa_{\rm eg }) \delta_k^2 /2 + \epsilon_g \delta_k. 
\end{aligned} \]
\end{proof}

The next  result provides a bound on the approximation error of the model in $B(x_k,\delta_k)$ under \eqref{eq:sufficiently accurate H g}.
\begin{lemma} \label{lem:phi-m2}
Under Assumption~\ref{ass:lip_cont2}, if \eqref{eq:sufficiently accurate H g} holds, it follows
\begin{equation} \label{eq:phi-m2}
    |\phi(x_k+s) - m_k(x_k+s)| \le (L_2/6 + \kappa_{\rm eg}+1 + \kappa_{\rm eh}/2) \delta_k^2\|s\| + \epsilon_H \delta_k\|s\|/ 2 + \epsilon_g^{3/2} 
\end{equation}
for all $x_k+s \in B(x_k,\delta_k)$. 
\end{lemma}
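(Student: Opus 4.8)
The plan is to mirror the proof of Lemma~\ref{lem:phi-m}, but to expand $\phi$ around $x_k$ to \emph{second} order rather than first. First I would substitute the model definition \eqref{eq:model_def} into $\phi(x_k+s)-m_k(x_k+s)$, then insert the exact gradient $\nabla\phi(x_k)$ and Hessian $\nabla^2\phi(x_k)$ and use the triangle inequality to split the error into three pieces: the second-order Taylor remainder $|\phi(x_k+s)-\phi(x_k)-\langle\nabla\phi(x_k),s\rangle-\tfrac12\langle\nabla^2\phi(x_k)s,s\rangle|$, the gradient-mismatch term $|\langle\nabla\phi(x_k)-g_k,s\rangle|$, and the Hessian-mismatch term $\tfrac12|\langle(\nabla^2\phi(x_k)-H_k)s,s\rangle|$.

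For the first piece I would use the integral form of the remainder, $\int_0^1(1-t)\langle[\nabla^2\phi(x_k+ts)-\nabla^2\phi(x_k)]s,s\rangle\,dt$, and bound it with Assumption~\ref{ass:lip_cont2} ($L_2$-Lipschitz Hessian) together with $\int_0^1(1-t)t\,dt=\tfrac16$, giving $\tfrac{L_2}{6}\|s\|^3$. The other two pieces follow from Cauchy--Schwarz and the second-order sufficiently accurate conditions \eqref{eq:sufficiently accurate H g}, yielding $(\kappa_{\rm eg}\delta_k^2+\epsilon_g)\|s\|$ and $\tfrac12(\kappa_{\rm eh}\delta_k+\epsilon_H)\|s\|^2$ respectively. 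Invoking $\|s\|\le\delta_k$ to trade one factor of $\|s\|$ for $\delta_k$ where helpful turns $\tfrac{L_2}{6}\|s\|^3$, $\kappa_{\rm eg}\delta_k^2\|s\|$, $\tfrac{\kappa_{\rm eh}}{2}\delta_k\|s\|^2$ and $\tfrac{\epsilon_H}{2}\|s\|^2$ into the $L_2/6$, $\kappa_{\rm eg}$ and $\kappa_{\rm eh}/2$ contributions to the $\delta_k^2\|s\|$ coefficient and into the $\epsilon_H\delta_k\|s\|/2$ term of the target bound \eqref{eq:phi-m2}.

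The one genuinely nonroutine step, and the main obstacle, is disposing of the leftover term $\epsilon_g\|s\|$: it must become the ``$+1$'' contribution to the $\delta_k^2\|s\|$ coefficient \emph{plus} the constant $\epsilon_g^{3/2}$. I would handle it by a short case split on the size of $\delta_k$ relative to $\sqrt{\epsilon_g}$. If $\delta_k^2\ge\epsilon_g$, then $\epsilon_g\|s\|\le\delta_k^2\|s\|$; if instead $\delta_k^2<\epsilon_g$, then $\|s\|\le\delta_k<\epsilon_g^{1/2}$ forces $\epsilon_g\|s\|\le\epsilon_g^{3/2}$. In either case $\epsilon_g\|s\|\le\delta_k^2\|s\|+\epsilon_g^{3/2}$. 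Collecting all the contributions then produces exactly \eqref{eq:phi-m2}.
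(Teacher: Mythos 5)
Your proposal is correct and follows essentially the same route as the paper: the same three-way triangle-inequality decomposition, the same $L_2\|s\|^3/6$ Taylor remainder bound, and the same case split on $\delta_k$ versus $\sqrt{\epsilon_g}$ to absorb the $\epsilon_g\|s\|$ term into $\delta_k^2\|s\| + \epsilon_g^{3/2}$ (the paper packages that step as an intermediate bound $\|\nabla\phi(x_k)-g_k\| \le (\kappa_{\rm eg}+1)\delta_k^2 + \min\{\epsilon_g^{3/2}/\delta_k,\epsilon_g\}$ before multiplying by $\|s\|$, but the argument is the same).
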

\begin{proof}
First let us show that if \eqref{eq:sufficiently accurate H g: g} holds, then 
\begin{equation}
    \|\nabla\phi(x_k) - g_k \| \le (\kappa_{\rm eg} +1)\delta_k^2 +\min \left\{\epsilon_g^{3/2}/\delta_k,\epsilon_g\right\}. \label{eq:sufficiently accurate H g: g1}
\end{equation}
Clearly if $\epsilon_g^{3/2} / \delta_k \geq \epsilon_g$, then \eqref{eq:sufficiently accurate H g: g} trivially implies \eqref{eq:sufficiently accurate H g: g1}.
On the other hand, if $\epsilon_g^{3/2} / \delta_k \leq \epsilon_g$, then $\delta_k\geq \epsilon_g^{1/2}$ and thus $\kappa_{eg}\delta_k^2+ \epsilon_g\leq 
(\kappa_{eg}+1)\delta_k^2$, hence again  \eqref{eq:sufficiently accurate H g: g} implies \eqref{eq:sufficiently accurate H g: g1}.

Thus, by the triangle inequality, Assumption~\ref{ass:lip_cont2} and \eqref{eq:sufficiently accurate H g}, it follows that,
\[ \begin{aligned}
    &|\phi(x_k+s) - m_k(x_k+s)| \\
    &= \left| \phi(x_k + s)  - \phi(x_k) - \langle g_k, s \rangle - 0.5 \langle H_k s, s \rangle \right| \\
    &\le \left| \phi(x_k + s)  - \phi(x_k) - \langle \nabla \phi(x_k), s \rangle  - \langle \nabla^2 \phi(x_k) s, s \rangle / 2\right| \\
    &\qquad + \left| \langle \nabla \phi(x_k), s \rangle - \langle g_k, s \rangle \right| + \left| \langle \nabla^2 \phi(x_k) s, s \rangle - \langle H_k s, s \rangle \right| / 2 \\
    &\le L_2 \|s\|^3 / 6  + \|\nabla\phi(x_k) - g_k\| \|s\| + \| \nabla^2 \phi(x_k) - H_k \| \|s\|^2 / 2 \\
    &\le L_2\|s\|^3/6 + ((\kappa_{\rm eg}+1)\delta_k^2 +  \epsilon_g^{3/2}/\delta_k) \|s\| + (\kappa_{\rm eh}\delta_k + \epsilon_H) \|s\|^2/ 2 \\
    &\le (L_2/6 + \kappa_{\rm eg}+1 + \kappa_{\rm eh}/2) \delta_k^2\|s\| + \epsilon_H \delta_k\|s\|/ 2 + \epsilon_g^{3/2}. 
\end{aligned} \]
\end{proof}

\subsection{Algorithms viewed as stochastic processes}
\label{sec:stoch_proc}

For the purposes of analyzing the convergence of Algorithms~\ref{alg:tr} and~\ref{alg:tr2}, we view the algorithms as stochastic processes. Here we introduce and explain some useful notation. 

Let $\{X_k\}, \{X_k^+\}$ denote the sequences of random vectors in $\mathbb{R}^n$ whose realizations are $\{x_k\}$ and $\{x_k+s_k\}$, respectively. 
Let $\{\Delta_k\}$ denote the sequence of random positive numbers whose  realizations are $\{\delta_k\}$, and $\{M_k\}$ denote the sequence of random models whose realizations are $\{m_k\}$. 
For the error in the zeroth-order oracle, we denote by $\{\mathcal{E}_k\}$ and $\{\mathcal{E}_k^+\}$ the absolute errors whose realizations are $\{|f_k-\phi(x_k)|\}$ and $\{|f_k^+ - \phi(x_k+s_k)|\}$, respectively. 
Additionally, with a slight abuse of notation, let $\{\rho_k\}$ be the random variables that share the same symbols as their realizations.  

With these random variables, Algorithms~\ref{alg:tr} or \ref{alg:tr2} first generate  in each iteration (random) gradient and Hessian approximations, $g(X_k,\xi^{(1)})$ and $H(X_k,\xi^{(2)})$, using Oracles~\ref{oracle.first} and \ref{oracle.second}, respectively. 
Subsequently, a random model $M_k$ is constructed deterministically around the current iterate $X_k$ using these approximations and then minimized within the trust-region to generate $X_k^+$ (deterministically, given $X_k$ and $M_k$). 
Then, random function value estimates $f(X_k, \xi^{(0)}_k)$ and $f(X_k^+, \xi^{(0+)}_{k})$ are generated using Oracle~\ref{oracle.zero}, which define the random errors $\mathcal{E}_k = |e(X_k, \xi^{(0)}_k)|$ and $\mathcal{E}_k^+ = |e(X_k^+, \xi^{(0+)}_{k})|$. 
And finally, $\rho_k$, $\Delta_{k+1}$ and $X_{k+1}$ are computed in a deterministic manner (given $X_k$, $M_k$ and the function values). 
Thus, given $X_k$ and $\Delta_k$, the randomness at the $k$-th iteration is generated by the variables $\xi^{(0)}_{k}$,  $\xi^{(0+)}_{k}$, $\xi^{(1)}_k$ and $\xi^{(2)}_k$. 

Let $\mathcal{F}_{k-1}$ denote the $\sigma$-algebra
\begin{equation}\label{eq:sig_alg1}
\mathcal{F}_{k-1}=\sigma\left( \left(\xi^{(0)}_0, {\xi^{(0+)}_{0}},\xi^{(1)}_0, \xi^{(2)}_0\right), \dots, \left(\xi^{(0)}_{k-1}, {\xi^{(0+)}_{k-1}},\xi^{(1)}_{k-1}, \xi^{(2)}_{k-1}\right)\right). 
\end{equation}

Similarly, let 
\begin{equation}\label{eq:sig_alg2}
\mathcal{F}^\prime_{k-1}=\sigma\left( \left(\xi^{(0)}_0, {\xi^{(0+)}_{0}},\xi^{(1)}_0, \xi^{(2)}_0\right), \dots, \left(\xi^{(0)}_{k-1}, {\xi^{(0+)}_{k-1}},\xi^{(1)}_{k-1}, \xi^{(2)}_{k-1}\right), \left(\xi^{(1)}_k, \xi^{(2)}_k\right) \right) 
\end{equation}

We note that the random variables $\{X_k\}$ and $\Delta_k$ are defined by $\mathcal{F}_{k-1}$, the random variables $\{X_k^+\}$ and $M_k$ are defined by  $\mathcal{F}^\prime_{k-1}$, and the random variables $\{\mathcal{E}_k\}$, $\{\mathcal{E}_k^+\}$ and $\rho_k$ are defined by $\mathcal{F}_{k}$.

\section{First-order stochastic convergence analysis} 
\label{sec:analysis1}

In this section, we analyze the first-order convergence of Algorithm~\ref{alg:tr}.  
The goal is to derive a probabilistic result of the form 
\[ \mathbbm{P} \left\{ \min_{0\le k \le T-1} \|\nabla\phi(X_k)\| < \epsilon \right\} \ge \text{a function of $T$ that converges to 1 as $T$ increase}
\]
for any sufficiently large $\epsilon$. This result cannot hold for arbitrarily small values of $\epsilon>0$ unless $\epsilon_f=\epsilon_g=0$.
The specific lower bounds on $\epsilon$ in terms of $\epsilon_f$ and $\epsilon_g$ will be presented in Theorems~\ref{thm:convergence1_bounded} and \ref{thm:convergence1_subexponential}. 

We begin by stating and proving three key lemmas about the behavior of Algorithm~\ref{alg:tr} when \eqref{eq:sufficiently accurate g} and Assumptions~\ref{assum:lip_cont} and \ref{assum:bhm} hold. 
Let $e_k = f_k - \phi(x_k)$ and $e_k^+ = f_k^+ - \phi(x_k+s_k)$.
The first lemma provides a sufficient condition for accepting a step ($x_{k+1}=x_k+s_k$).

\begin{lemma}[{\bf Sufficient condition for accepting step}] \label{lem:delta accept} 
Under Assumptions~\ref{assum:lip_cont} and \ref{assum:bhm}, if \eqref{eq:sufficiently accurate g} holds, $r \ge e_k^+ - e_k$, and 
\begin{equation} \label{eq:delta accept}
	\delta_k \le \frac{(1-\eta_1)\kappa_{\rm fcd}}{L_1 + \kappa_{\rm bhm} + 2\kappa_{\rm eg}} \|g_k\| - \frac{2}{L_1 + \kappa_{\rm bhm} + 2\kappa_{\rm eg}} \epsilon_g,
\end{equation}
then, $\rho_k \geq \eta_1$ in Algorithm~\ref{alg:tr}.
\end{lemma}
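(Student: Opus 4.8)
The plan is to verify the defining inequality $\rho_k \ge \eta_1$ directly, by bounding the numerator of $\rho_k$ from below by $\eta_1$ times its (positive) denominator $D_k := m_k(x_k) - m_k(x_k+s_k)$. First I would rewrite the numerator in terms of the true objective: substituting $f_k = \phi(x_k) + e_k$ and $f_k^+ = \phi(x_k+s_k) + e_k^+$ gives $f_k - f_k^+ + r = \bigl[\phi(x_k) - \phi(x_k+s_k)\bigr] + (e_k - e_k^+) + r$, and the hypothesis $r \ge e_k^+ - e_k$ makes the trailing two terms nonnegative. Hence it suffices to show $\phi(x_k) - \phi(x_k+s_k) \ge \eta_1 D_k$.

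Next I would pass from the true decrease to the model decrease using Lemma~\ref{lem:phi-m}. Since $m_k(x_k) = \phi(x_k)$ exactly, applying \eqref{eq:phi-m} at the point $x_k + s_k \in B(x_k,\delta_k)$ yields $\phi(x_k) - \phi(x_k+s_k) \ge D_k - \bigl[(L_1 + \kappa_{\rm bhm} + 2\kappa_{\rm eg})\delta_k^2/2 + \epsilon_g \delta_k\bigr]$. Combining this with the first step reduces the goal to the single inequality $(1-\eta_1) D_k \ge (L_1 + \kappa_{\rm bhm} + 2\kappa_{\rm eg})\delta_k^2/2 + \epsilon_g \delta_k$.

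The main obstacle, and the only delicate point, is extracting a usable lower bound on $D_k$ from the Cauchy-decrease condition \eqref{eq:Cauchy decrease}, whose right-hand side contains $\min\{\|g_k\|/\|H_k\|, \delta_k\}$. The key observation is that hypothesis \eqref{eq:delta accept} forces this minimum to equal $\delta_k$: it gives $\delta_k \le \frac{(1-\eta_1)\kappa_{\rm fcd}}{L_1+\kappa_{\rm bhm}+2\kappa_{\rm eg}}\|g_k\|$, and because $\eta_1 \in (0,1)$ (needed for the right-hand side of \eqref{eq:delta accept} to be positive, which simultaneously forces $\|g_k\| > 0$ and hence $D_k > 0$), $\kappa_{\rm fcd} \le 1$, and $\|H_k\| \le \kappa_{\rm bhm} \le L_1+\kappa_{\rm bhm}+2\kappa_{\rm eg}$ by Assumption~\ref{assum:bhm}, the leading coefficient is at most $1/\|H_k\|$, so $\delta_k \le \|g_k\|/\|H_k\|$ (trivially so if $\|H_k\| = 0$). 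Therefore \eqref{eq:Cauchy decrease} gives $D_k \ge \frac{\kappa_{\rm fcd}}{2}\|g_k\|\delta_k$. Substituting this into the reduced inequality and dividing through by $\delta_k > 0$ turns the claim into $\frac{L_1+\kappa_{\rm bhm}+2\kappa_{\rm eg}}{2}\delta_k + \epsilon_g \le (1-\eta_1)\frac{\kappa_{\rm fcd}}{2}\|g_k\|$, which rearranges to precisely \eqref{eq:delta accept}. Since that bound holds by assumption, the chain closes and $\rho_k \ge \eta_1$ follows; everything beyond the collapse of the minimum is routine algebra.
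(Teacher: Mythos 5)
Your proposal is correct and follows essentially the same route as the paper's proof: both use the hypothesis $r \ge e_k^+ - e_k$ to drop the noise terms, invoke Lemma~\ref{lem:phi-m} to pass from the true decrease to the model decrease, observe that \eqref{eq:delta accept} forces $\delta_k \le \|g_k\|/\|H_k\|$ so that the Cauchy-decrease bound becomes $\kappa_{\rm fcd}\|g_k\|\delta_k/2$, and close by rearranging into \eqref{eq:delta accept}. The only difference is presentational (the paper writes a single chain of lower bounds on $\rho_k$, while you bound numerator against denominator), plus your slightly more explicit justification of the collapse of the minimum.
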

\begin{proof}
Since $\delta_k \le (1-\eta_1)\kappa_{\rm fcd} \|g_k\| / (L_1 + \kappa_{\rm bhm} + 2\kappa_{\rm eg})  \le \|g_k\| / \kappa_{bhm} \le \|g_k\| / \|H_k\|$, \eqref{eq:Cauchy decrease} suggests $m_k(x_k) - m_k(x_k + s_k) \geq \kappa_{\rm fcd} \|g_k\| \delta_k / 2$. 
Combining this inequality with $e_k - e_k^+ + r \ge 0$ and Lemma~\ref{lem:phi-m}, we have 
\[ \begin{aligned}
\rho_k &=\frac{\phi(x_k) + e_k - \phi(x_k + s_k) - e_k^+ + r}{m_k(x_k) - m_k(x_k + s_k)} \\
&\ge \frac{\phi(x_k) - \phi(x_k + s_k)}{m_k(x_k) - m_k(x_k + s_k)} \\
&\leftstackrel{\eqref{eq:phi-m}}{\ge} \frac{\phi(x_k) - m_k(x_k + s_k) - (L_1 + \kappa_{\rm bhm} + 2\kappa_{\rm eg}) \delta_k^2 /2 - \epsilon_g \delta_k}{m_k(x_k) - m_k(x_k + s_k)} \\
&\leftstackrel{\eqref{eq:model_def}}{=} 1 - \frac{(L_1 + \kappa_{\rm bhm} + 2\kappa_{\rm eg}) \delta_k^2 /2 + \epsilon_g \delta_k}{m_k(x_k) - m_k(x_k + s_k)} \\
&\leftstackrel{\eqref{eq:Cauchy decrease}}{\ge} 1 - \frac{(L_1 + \kappa_{\rm bhm} + 2\kappa_{\rm eg}) \delta_k^2 + 2\epsilon_g \delta_k}{\kappa_{\rm fcd} \|g_k\| \delta_k} \\
&\leftstackrel{\eqref{eq:delta accept}}{\ge} 1 - (1 - \eta_1) = \eta_1. 
\end{aligned} \]
\end{proof}

The next lemma provides a sufficient condition for a successful step ($x_{k+1}=x_k+s_k$ and $\delta_{k+1}=\gamma^{-1} \delta_k$). 

\begin{lemma}[\bf Sufficient condition for successful step] \label{lem:delta success}
Under Assumptions~\ref{assum:lip_cont} and \ref{assum:bhm}, if \eqref{eq:sufficiently accurate g} holds, $r \ge e_k^+ - e_k$, and 
\begin{equation} \label{eq:delta success}
    \delta_k \le C_1 \|\nabla \phi(x_k)\| - C_2 \epsilon_g, 
\end{equation} 
where 
\begin{equation} \begin{aligned}\label{eq:c1c2}
    C_1 &\stackrel{\rm def}{=} \min\left\{ 
        \frac{(1-\eta_1)\kappa_{\rm fcd}}{L_1+\kappa_{\rm bhm}+2\kappa_{\rm eg} + (1-\eta_1)\kappa_{\rm fcd}\kappa_{\rm eg}}, 
        \frac{1}{\kappa_{\rm eg} + \eta_2}
    \right\} \\
    C_2 &\stackrel{\rm def}{=} \max\left\{ 
        \frac{(1-\eta_1)\kappa_{\rm fcd} + 2}{L_1+\kappa_{\rm bhm}+2\kappa_{\rm eg} + (1-\eta_1)\kappa_{\rm fcd}\kappa_{\rm eg}},
        \frac{1}{\kappa_{\rm eg} + \eta_2}
    \right\}
\end{aligned} 
\end{equation} 
then, $\rho_k \geq \eta_1$ and $\|g_k\| \ge \eta_2 \delta_k$ in Algorithm~\ref{alg:tr}. 
\end{lemma}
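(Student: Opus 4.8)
The plan is to verify the two defining properties of a \emph{successful} step in Algorithm~\ref{alg:tr} separately: that $\|g_k\|\ge\eta_2\delta_k$ (which triggers the radius increase $\delta_{k+1}=\gamma^{-1}\delta_k$) and that $\rho_k\ge\eta_1$ (which triggers acceptance $x_{k+1}=x_k+s_k$). Both will flow from combining the single hypothesis $\delta_k\le C_1\|\nabla\phi(x_k)\|-C_2\epsilon_g$ with the reverse triangle inequality applied to \eqref{eq:sufficiently accurate g}, namely
\[
\|g_k\|\ge\|\nabla\phi(x_k)\|-\|\nabla\phi(x_k)-g_k\|\ge\|\nabla\phi(x_k)\|-\kappa_{\rm eg}\delta_k-\epsilon_g.
\]
The key design feature I would exploit is that the two branches of the $\min$ defining $C_1$ and of the $\max$ defining $C_2$ each correspond to one of the two properties; since $C_1$ is no larger than either branch and $C_2$ is no smaller than either branch, the one hypothesis on $\delta_k$ implies both of the separate bounds I will need.

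For the gradient-size condition I would use the second branches of \eqref{eq:c1c2}, i.e.\ $C_1\le\frac{1}{\kappa_{\rm eg}+\eta_2}$ and $C_2\ge\frac{1}{\kappa_{\rm eg}+\eta_2}$, which turn the hypothesis into $\delta_k\le\frac{1}{\kappa_{\rm eg}+\eta_2}\left(\|\nabla\phi(x_k)\|-\epsilon_g\right)$, equivalently $(\kappa_{\rm eg}+\eta_2)\delta_k\le\|\nabla\phi(x_k)\|-\epsilon_g$. Rearranging and invoking the lower bound on $\|g_k\|$ above then gives directly $\|g_k\|\ge\|\nabla\phi(x_k)\|-\kappa_{\rm eg}\delta_k-\epsilon_g\ge\eta_2\delta_k$.

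For step acceptance I would invoke Lemma~\ref{lem:delta accept}, whose hypothesis $r\ge e_k^+-e_k$ is already assumed here; it remains to verify its radius bound \eqref{eq:delta accept}. Substituting the lower bound on $\|g_k\|$ into the right-hand side of \eqref{eq:delta accept} and collecting every occurrence of $\delta_k$ on the left shows that \eqref{eq:delta accept} is implied by
\[
\delta_k\le\frac{(1-\eta_1)\kappa_{\rm fcd}}{L_1+\kappa_{\rm bhm}+2\kappa_{\rm eg}+(1-\eta_1)\kappa_{\rm fcd}\kappa_{\rm eg}}\|\nabla\phi(x_k)\|-\frac{(1-\eta_1)\kappa_{\rm fcd}+2}{L_1+\kappa_{\rm bhm}+2\kappa_{\rm eg}+(1-\eta_1)\kappa_{\rm fcd}\kappa_{\rm eg}}\epsilon_g,
\]
whose two coefficients are exactly the first branches of $C_1$ and $C_2$ in \eqref{eq:c1c2}. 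Since $C_1$ is bounded above by this first branch and $C_2$ is bounded below by the corresponding one, the hypothesis yields the displayed inequality, hence \eqref{eq:delta accept}, hence $\rho_k\ge\eta_1$ by Lemma~\ref{lem:delta accept}. With both $\rho_k\ge\eta_1$ and $\|g_k\|\ge\eta_2\delta_k$ in hand, the update rule of Algorithm~\ref{alg:tr} forces $x_{k+1}=x_k+s_k$ and $\delta_{k+1}=\gamma^{-1}\delta_k$, as claimed.

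The main obstacle will be the bookkeeping in the acceptance step: because $\delta_k$ appears both as the quantity being bounded and inside the lower bound for $\|g_k\|$ on the right-hand side of \eqref{eq:delta accept}, one must transfer the $\kappa_{\rm eg}\delta_k$ term to the left and divide through by the factor $1+\frac{(1-\eta_1)\kappa_{\rm fcd}\kappa_{\rm eg}}{L_1+\kappa_{\rm bhm}+2\kappa_{\rm eg}}$. Arranging this self-referential manipulation so that the resulting coefficients land \emph{exactly} on the first branches of $C_1$ and $C_2$ is where care is required, although the algebra itself is elementary once the substitution is made.
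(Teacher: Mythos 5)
Your proposal is correct and follows essentially the same route as the paper's proof: both derive $\|g_k\|\ge\|\nabla\phi(x_k)\|-\kappa_{\rm eg}\delta_k-\epsilon_g$ from \eqref{eq:sufficiently accurate g}, use the second branches of $C_1,C_2$ to get $\|g_k\|\ge\eta_2\delta_k$, and use the first branches to verify the hypothesis \eqref{eq:delta accept} of Lemma~\ref{lem:delta accept} and conclude $\rho_k\ge\eta_1$. The only cosmetic difference is that the paper writes the acceptance step as a direct chain of inequalities rather than explicitly isolating $\delta_k$, but the algebra is identical.
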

\begin{proof}
By \eqref{eq:sufficiently accurate g} we have
\[\|g_k\| \ge \|\nabla \phi(x_k)\| - \|\nabla \phi(x_k) - g_k\| 
\ge \|\nabla \phi(x_k)\| - \kappa_{\rm eg} \delta_k - \epsilon_g. 
\]
Then  
\[ \begin{aligned}
&\frac{(1-\eta_1)\kappa_{\rm fcd}}{L_1 + \kappa_{\rm bhm} + 2\kappa_{\rm eg}} \|g_k\| - \frac{2}{L_1 + \kappa_{\rm bhm} + 2\kappa_{\rm eg}} \epsilon_g \\
&\ge \frac{(1-\eta_1)\kappa_{\rm fcd}}{L_1 + \kappa_{\rm bhm} + 2\kappa_{\rm eg}} (\|\nabla \phi(x_k)\| - \kappa_{\rm eg} \delta_k - \epsilon_g) - \frac{2}{L_1 + \kappa_{\rm bhm} + 2\kappa_{\rm eg}} \epsilon_g 
\stackrel{\eqref{eq:delta success}}{\ge} \delta_k. 
\end{aligned} \]
The last inequality holds due to \eqref{eq:delta success} with $C_1$ and $C_2$ set to the first term in their corresponding min/maximization operation. 
Then by Lemma~\ref{lem:delta accept}, we have $\rho_k \ge \eta_1$. 
We also have 
\[  \|g_k\| 
    \ge \|\nabla \phi(x_k)\| - \kappa_{\rm eg} \delta_k - \epsilon_g 
    \stackrel{\eqref{eq:delta success}}{\ge} \eta_2 \delta_k. 
\]
\end{proof}

The last lemma provides a lower bound on the progress made in each iteration.
\begin{lemma}[\bf Progress made in each iteration] \label{lem:progress}
In Algorithm~\ref{alg:tr}, if $\rho_k \geq \eta_1$ and $\|g_k\| \ge \eta_2 \delta_k$, then 
\begin{equation*}
    \phi(x_k) - \phi(x_{k+1}) \ge h(\delta_k) - e_k + e_k^+ - r,
\end{equation*}
where
\begin{equation}\label{eq.h}
	h(\delta) = C_3 \delta^2 \;\; \text{ and } \;\;
C_3 \stackrel{\rm def}{=} \frac{1}{2}\eta_1\eta_2 \kappa_{\rm fcd}\min \left\{ \frac{\eta_2}{\kappa_{\rm bhm}},1 \right\}. 
\end{equation}
If $\rho_k \geq \eta_1$ but $\|g_k\| < \eta_2 \delta_k$, then 
\begin{equation}
    \phi(x_k) - \phi(x_{k+1}) \ge - e_k + e_k^+ - r. 
\end{equation}
If $\rho_k < \eta_1$, then $\phi(x_{k+1}) = \phi(x_k)$. 
\end{lemma}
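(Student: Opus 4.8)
The plan is to dispatch the three cases by first converting the acceptance test $\rho_k \ge \eta_1$ into a guaranteed decrease of $\phi$, and then lower-bounding the model decrease. The common starting point for the two accepted-step cases is to substitute $f_k = \phi(x_k) + e_k$ and $f_k^+ = \phi(x_k+s_k)+e_k^+$ into the numerator of $\rho_k$, rewriting it as $\phi(x_k)-\phi(x_k+s_k) + e_k - e_k^+ + r$. In both cases $\rho_k \ge \eta_1$, so the step is accepted and $x_{k+1}=x_k+s_k$; moreover the denominator $m_k(x_k)-m_k(x_k+s_k)$ is strictly positive (if it were zero the step would be rejected by the convention stated just before the remark). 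Multiplying the inequality $\rho_k\ge\eta_1$ through by this positive denominator and rearranging yields
\[
\phi(x_k)-\phi(x_{k+1}) \ge \eta_1\big(m_k(x_k)-m_k(x_k+s_k)\big) - e_k + e_k^+ - r,
\]
so everything reduces to bounding the model decrease from below.

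For the first case ($\|g_k\| \ge \eta_2\delta_k$) I would invoke the Cauchy decrease condition \eqref{eq:Cauchy decrease}. The one manipulation requiring care is the term $\min\{\|g_k\|/\|H_k\|,\delta_k\}$: using $\|H_k\|\le\kappa_{\rm bhm}$ (Assumption~\ref{assum:bhm}) together with $\|g_k\|\ge\eta_2\delta_k$ gives $\|g_k\|/\|H_k\|\ge \eta_2\delta_k/\kappa_{\rm bhm}$, so the minimum is at least $\delta_k\min\{\eta_2/\kappa_{\rm bhm},1\}$. Substituting $\|g_k\|\ge\eta_2\delta_k$ once more for the leading factor produces $m_k(x_k)-m_k(x_k+s_k)\ge \tfrac{\kappa_{\rm fcd}\eta_2}{2}\,\delta_k^2\min\{\eta_2/\kappa_{\rm bhm},1\}$, and multiplying by $\eta_1$ recovers exactly $C_3\delta_k^2 = h(\delta_k)$ as defined in \eqref{eq.h}.

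For the second case ($\|g_k\|<\eta_2\delta_k$ but still $\rho_k\ge\eta_1$) I only need that the model decrease is nonnegative, which is immediate from \eqref{eq:Cauchy decrease} since its right-hand side is a product of nonnegative quantities. Dropping the nonnegative term $\eta_1(m_k(x_k)-m_k(x_k+s_k))$ from the displayed inequality yields $\phi(x_k)-\phi(x_{k+1})\ge -e_k+e_k^+-r$. The third case is immediate: $\rho_k<\eta_1$ triggers step rejection, so $x_{k+1}=x_k$ and $\phi(x_{k+1})=\phi(x_k)$.

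The argument is essentially mechanical, so I do not anticipate a serious obstacle; the only place demanding attention is the $\min$ manipulation in the first case, where the two lower bounds (on $\|g_k\|$ and on $\|H_k\|^{-1}$) must be combined correctly to extract the factor $\min\{\eta_2/\kappa_{\rm bhm},1\}$ appearing in $C_3$, while being careful to apply $\|g_k\|\ge\eta_2\delta_k$ to both the leading factor and inside the minimum.
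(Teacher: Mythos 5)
Your proposal is correct and follows essentially the same route as the paper's proof: rearrange $\rho_k\ge\eta_1$ into $\phi(x_k)-\phi(x_{k+1})\ge\eta_1\bigl(m_k(x_k)-m_k(x_k+s_k)\bigr)-e_k+e_k^+-r$, then lower-bound the model decrease via \eqref{eq:Cauchy decrease} together with $\|g_k\|\ge\eta_2\delta_k$ and $\|H_k\|\le\kappa_{\rm bhm}$ in the successful case, by zero otherwise, and note $x_{k+1}=x_k$ when $\rho_k<\eta_1$. The handling of the $\min$ term matches the paper exactly, so there is nothing to add.
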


\begin{proof}
Let $\rho_k \ge \eta_1$. We have 
\[ 
\eta_1 \le \rho_k = \frac{\phi(x_k) + e_k - \phi(x_k + s_k) - e_k^+ + r}{m_k(x_k) - m_k(x_k + s_k)}, 
\]
which can be rearranged to $\phi(x_k) - \phi(x_{k+1}) \ge \eta_1[m_k(x_k) - m_k(x_k + s_k)] - e_k + e_k^+ - r$. 
If $\|g_k\| \ge \eta_2 \delta_k$, the first term of this expression satisfies
\[ 
\eta_1[m_k(x_k) - m_k(x_k + s_k)] 
\stackrel{\eqref{eq:Cauchy decrease}}{\ge} \frac{\eta_1\kappa_{\rm fcd}}{2} \|g_k\| \min \left\{ \frac{\|g_k\|}{\| H_k\|},\delta_k\right\} 
\ge \frac{\eta_1\kappa_{\rm fcd}}{2} \eta_2 \delta_k \min \left\{ \frac{\eta_2 \delta_k}{\kappa_{\rm bhm}},\delta_k\right\}
= h(\delta_k);
\]
otherwise 
\[ \eta_1[m_k(x_k) - m_k(x_k + s_k)] 
\stackrel{\eqref{eq:Cauchy decrease}}{\ge} \frac{\eta_1\kappa_{\rm fcd}}{2} \|g_k\| \min \left\{ \frac{\|g_k\|}{\| H_k\|},\delta_k\right\} 
\ge 0. 
\]
If $\rho_k < \eta_1$, we have $x_{k+1} = x_k$, so $\phi(x_{k+1}) = \phi(x_k)$. 
\end{proof}

Next, our analysis relies on categorizing iterations $k=0,1,\dots, T-1$ into different types, where $T$ is any positive integer. 
These types are defined using the following random indicator variables: 
\begin{equation*}
\begin{aligned}
&I_k = \mathbbm{1}\{\|\nabla\phi(X_k) - \nabla M_k(X_k)\| \le \kappa_{\rm eg} \Delta_k + \epsilon_g \} &\text{ (whether the model is first-order sufficiently accurate)} &\\
&J_k = \mathbbm{1}\{r \ge \mathcal{E}_k^+ + \mathcal{E}_k\} &\text{ (whether function evaluation errors are compensated by $r$)} &\\
&\Theta_k = \mathbbm{1}\{\rho_k \ge \eta_1 \text{ and } \|\nabla M_k(X_k)\| \ge \eta_2 \Delta_k \} &\text{ (whether the step is successful)} &\\
&\Theta_k' = \mathbbm{1}\{ \rho_k \ge \eta_1 \} &\text{ (whether the step is accepted)} &\\
&\Lambda_k = \mathbbm{1}\{ \Delta_k > \bar\Delta\}, \quad
\Lambda_k' = \mathbbm{1}\{ \Delta_k \ge \bar\Delta'\}, 
\end{aligned}
\end{equation*}
where $\bar\Delta$ and $\bar\Delta'$ are defined as 
\begin{equation} \label{eq:deltabar} \begin{aligned}
    \bar\Delta &= C_1 \min_{0\le k \le T-1} \|\nabla\phi(X_k)\| - C_2 \epsilon_g, \\
    \bar\Delta' &= \min_l\{\gamma^l \delta_0:~ \gamma^l \delta_0 > \gamma \bar\Delta \text{ and } l \in \mathbb{Z}\},
\end{aligned} \end{equation}
and the positive constants $C_1$ and $C_2$ are defined in \eqref{eq:c1c2}.

Notice that under Oracle~\ref{oracle.zero}.\ref{ass:bounded noise}, the condition $r \ge 2\epsilon_f \ge \mathcal{E}_k^+ + \mathcal{E}_k$ always holds, thus $J_k=1$. 
The random variable $\bar\Delta$ is crucial to our analysis not only because it involves the value $\min_{0\le k \le T-1} \|\nabla\phi(X_k)\|$, but also because of the following corollary to Lemma~\ref{lem:delta success}, which shows that if the errors from Oracles~\ref{oracle.zero} and \ref{oracle.first} are small, and the TR radius is also small, then the iteration is successful. 
\begin{corollary}\label{cor.1st} 
    If $I_k J_k =1$ and $\Lambda_k=0$, then $\Theta_k = 1$. 
\end{corollary}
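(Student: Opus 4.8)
The plan is to observe that the corollary is essentially a reformulation of Lemma~\ref{lem:delta success} in the language of the indicator variables: the three hypotheses $I_k=1$, $J_k=1$, and $\Lambda_k=0$ translate, respectively, into the three hypotheses of Lemma~\ref{lem:delta success} at iteration $k$, so the conclusion follows by a direct invocation of that lemma. Accordingly, I would proceed by unpacking each indicator in turn and matching it to the corresponding assumption.

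First, the event $I_k=1$ is by definition the statement that the realized model is first-order sufficiently accurate, i.e. $\|\nabla\phi(x_k)-g_k\|\le \kappa_{\rm eg}\delta_k+\epsilon_g$, which is precisely \eqref{eq:sufficiently accurate g}. Next, from $J_k=1$ I would extract the inequality $r\ge e_k^+-e_k$ demanded by Lemma~\ref{lem:delta success}: since $\mathcal{E}_k$ and $\mathcal{E}_k^+$ are the absolute errors $|e_k|$ and $|e_k^+|$, the triangle inequality gives $e_k^+-e_k\le |e_k^+|+|e_k|=\mathcal{E}_k^++\mathcal{E}_k\le r$.

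The third condition requires a little more care. From $\Lambda_k=0$, i.e. $\Delta_k\le\bar\Delta$, together with the definition $\bar\Delta=C_1\min_{0\le j\le T-1}\|\nabla\phi(X_j)\|-C_2\epsilon_g$ in \eqref{eq:deltabar}, I would use the elementary bound $\min_{0\le j\le T-1}\|\nabla\phi(X_j)\|\le\|\nabla\phi(X_k)\|$ to obtain $\delta_k\le\bar\Delta\le C_1\|\nabla\phi(x_k)\|-C_2\epsilon_g$, which is exactly the hypothesis \eqref{eq:delta success} (with $C_1,C_2$ as in \eqref{eq:c1c2}). Having verified all three hypotheses, I would then apply Lemma~\ref{lem:delta success} to conclude that $\rho_k\ge\eta_1$ and $\|g_k\|=\|\nabla M_k(X_k)\|\ge\eta_2\delta_k=\eta_2\Delta_k$, which is precisely the event $\Theta_k=1$.

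The only subtlety—and the sole, rather mild, obstacle—is the passage from the running minimum appearing in $\bar\Delta$ to the per-iteration gradient norm appearing in \eqref{eq:delta success}: the sufficient condition for a successful step is phrased in terms of $\|\nabla\phi(x_k)\|$ at the specific index $k$, whereas $\bar\Delta$ is built from the minimum over all indices $0,\dots,T-1$, so it is the monotonicity $\min_j\|\nabla\phi(X_j)\|\le\|\nabla\phi(X_k)\|$ that bridges the two and guarantees that the bound on $\Delta_k$ is tight enough at iteration $k$. Everything else in the argument is a direct pattern-match against Lemma~\ref{lem:delta success}, so no further estimation is needed.
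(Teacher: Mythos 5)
Your proposal is correct and matches the paper's intent exactly: the paper states this as an immediate corollary of Lemma~\ref{lem:delta success} without writing out the details, and your unpacking of $I_k=1$ into \eqref{eq:sufficiently accurate g}, of $J_k=1$ into $r\ge \mathcal{E}_k+\mathcal{E}_k^+\ge e_k^+-e_k$, and of $\Lambda_k=0$ into $\Delta_k\le\bar\Delta\le C_1\|\nabla\phi(X_k)\|-C_2\epsilon_g$ via the running minimum is precisely the intended argument. Your observation about passing from the minimum over $0,\dots,T-1$ to the gradient norm at the specific index $k$ is the right (and only) subtlety.
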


We also have the following lemma as a direct consequence of the definitions of Oracle~\ref{oracle.first} and $I_k$.
 
\begin{lemma} \label{lem:true} 
The random sequence $\{ M_k\}$ satisfies the submartingale-type condition
\begin{align}	\label{prob-delta-def}
	\mathbbm{P}\{I_k = 1 | \mathcal{F}_{k-1}\} \geq p_1 \text{ for all } k\in\{0,1,\dots\},
\end{align}
where  $\mathcal{F}_{k-1}$ is defined in \eqref{eq:sig_alg1}.
\end{lemma}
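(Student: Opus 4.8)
The plan is to reduce this conditional probability statement to the unconditional guarantee \eqref{eq:first_order} of Oracle~\ref{oracle.first} by conditioning on $\mathcal{F}_{k-1}$, which freezes the oracle's input and leaves only the oracle's own randomness. The first observation I would record is that $I_k$ is precisely the indicator of the event appearing in Oracle~\ref{oracle.first}: since the model \eqref{eq:model_def} gives $\nabla M_k(X_k) = g_k = g(X_k,\xi^{(1)}_k)$, the event $\{I_k = 1\}$ is exactly $\{\|g(X_k,\xi^{(1)}_k) - \nabla\phi(X_k)\| \le \epsilon_g + \kappa_{\rm eg}\Delta_k\}$.

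Next I would recall from Section~\ref{sec:stoch_proc} that both $X_k$ and $\Delta_k$ are $\mathcal{F}_{k-1}$-measurable, whereas the randomness entering $g_k$ comes only through $\xi^{(1)}_k$, which is not part of $\mathcal{F}_{k-1}$. Consequently, upon conditioning on $\mathcal{F}_{k-1}$, the oracle input triplet $(\Delta_k, p_1, X_k)$ is frozen at its realized value $(\delta_k, p_1, x_k)$, and the only surviving source of randomness in the event $\{I_k = 1\}$ is $\xi^{(1)}_k$. I would then invoke the oracle bound directly: Oracle~\ref{oracle.first} asserts that for any fixed $x$ and input $\delta^{(1)}$, the probability over $\xi^{(1)}$ of $\|g(x,\xi^{(1)}) - \nabla\phi(x)\| \le \epsilon_g + \kappa_{\rm eg}\delta^{(1)}$ is at least $p_1$. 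Applying this with $x = x_k$ and $\delta^{(1)} = \delta_k$ gives $\mathbb{P}_{\xi^{(1)}_k}\{I_k = 1\} \ge p_1$ on that realization of $\mathcal{F}_{k-1}$. Since the bound $p_1$ is uniform over all realizations, taking conditional expectation yields $\mathbb{P}\{I_k = 1 \mid \mathcal{F}_{k-1}\} \ge p_1$, which is exactly \eqref{prob-delta-def}.

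The one delicate point, and the step I would be most careful about, is the transfer of the oracle's unconditional guarantee into a conditional one. This requires that the conditional distribution of $\xi^{(1)}_k$ given $\mathcal{F}_{k-1}$ coincides with the oracle distribution attached to the realized input $(\delta_k, x_k)$. This is precisely what is built into Definition~\ref{def:oracle} and Oracle~\ref{oracle.first}: the probability bound is asserted to hold for \emph{every} point $x$ and \emph{every} input, with the distribution of $\xi^{(1)}$ permitted to depend on them. Thus no independence of $\xi^{(1)}_k$ from the past beyond this pointwise guarantee is needed, and the entire argument amounts to recognizing that conditioning on $\mathcal{F}_{k-1}$ fixes the oracle input and that the oracle bound holds verbatim for each such fixed input. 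This is why the result is, as stated, a direct consequence of the definitions of Oracle~\ref{oracle.first} and $I_k$.
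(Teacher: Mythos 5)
Your proposal is correct and matches the paper's intent exactly: the paper gives no written proof, stating only that the lemma is ``a direct consequence of the definitions of Oracle~\ref{oracle.first} and $I_k$,'' and your argument is precisely the expansion of that remark — conditioning on $\mathcal{F}_{k-1}$ freezes the oracle input $(\Delta_k, p_1, X_k)$ at its realized value, after which \eqref{eq:first_order} supplies the uniform lower bound $p_1$ on the probability of the event defining $I_k$. Your explicit flagging of the modeling assumption that the conditional law of $\xi^{(1)}_k$ given the past is the oracle distribution for the realized input is a useful clarification of what the paper leaves implicit, but it is the same argument.
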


\begin{remark}
    To be specific, the random process $\left\{ \sum_{k=0}^{t-1} I_k - p_1 t \right\}_{t=0,1,\dots}$ is a submartingale. 
\end{remark}

Before we state and prove the main results of this section (Theorem~\ref{thm:convergence1_bounded} for Oracle~\ref{oracle.zero}.\ref{ass:bounded noise} and Theorem~\ref{thm:convergence1_subexponential} for Oracle~\ref{oracle.zero}.\ref{ass:subexponential noise}), we state and prove three technical lemmas that are used in the analysis of Algorithm~\ref{alg:tr} under both Oracle~\ref{oracle.zero}.\ref{ass:bounded noise} and Oracle~\ref{oracle.zero}.\ref{ass:subexponential noise}.
The first lemma provides an upper bound on the number of successful iterations with large (defined by $\Lambda_k'$) TR radius. 

\begin{lemma} \label{lem:total_progress}
For any positive integer $T$, we have 
\begin{equation} \label{eq:total_progress}
    h(\gamma \bar\Delta) \sum_{k=0}^{T-1} \Theta_k \Lambda_k' < \phi(x_0) - \hat\phi + \sum_{k=0}^{T-1} \Theta_k' \left( \mathcal{E}_k + \mathcal{E}_k^+ + r \right). 
\end{equation}
\end{lemma}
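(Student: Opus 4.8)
The plan is to treat this as a \emph{pathwise} inequality: for every realization of the algorithm the quantities $\bar\Delta$, $\bar\Delta'$, $\delta_k$, $e_k$, $e_k^+$ and all the indicators $\Theta_k,\Theta_k',\Lambda_k'$ are fixed numbers, so no probabilistic argument is needed here, only a telescoping accounting of the decrease in $\phi$. First I would write, using Assumption~\ref{assum:low_bound} to replace the terminal value by its lower bound,
\[
\phi(x_0)-\hat\phi \;\ge\; \phi(x_0)-\phi(x_T) \;=\; \sum_{k=0}^{T-1}\bigl(\phi(x_k)-\phi(x_{k+1})\bigr).
\]

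Next I would apply Lemma~\ref{lem:progress} to each summand, collapsing its three cases into the single per-iteration estimate
\[
\phi(x_k)-\phi(x_{k+1}) \;\ge\; \Theta_k\,h(\delta_k) \;-\; \Theta_k'\,\bigl(\mathcal{E}_k+\mathcal{E}_k^+ + r\bigr).
\]
When $\rho_k\ge\eta_1$ and $\|g_k\|\ge\eta_2\delta_k$ (so $\Theta_k=\Theta_k'=1$), Lemma~\ref{lem:progress} gives the decrease $h(\delta_k)-e_k+e_k^+-r$, and the bounds $-e_k\ge-\mathcal{E}_k$ and $e_k^+\ge-\mathcal{E}_k^+$ produce the displayed right-hand side; when the step is only accepted ($\Theta_k=0,\Theta_k'=1$) the $h$-term drops out; and when the step is rejected ($\Theta_k'=0$) the decrease is zero. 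Summing over $k$ and rearranging then yields
\[
\sum_{k=0}^{T-1}\Theta_k\,h(\delta_k) \;\le\; \phi(x_0)-\hat\phi + \sum_{k=0}^{T-1}\Theta_k'\,\bigl(\mathcal{E}_k+\mathcal{E}_k^+ + r\bigr).
\]

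Finally I would lower-bound the left-hand sum. Since $h(\delta)=C_3\delta^2$ from \eqref{eq.h} is nonnegative and strictly increasing for $\delta\ge0$, I may discard every term with $\Lambda_k'=0$ and retain only those with $\Lambda_k'=1$; for the latter the defining property $\delta_k\ge\bar\Delta'>\gamma\bar\Delta$ in \eqref{eq:deltabar} gives $h(\delta_k)\ge h(\bar\Delta')>h(\gamma\bar\Delta)$, so that $\sum_k\Theta_k h(\delta_k)\ge\sum_k\Theta_k\Lambda_k'\,h(\delta_k)\ge h(\gamma\bar\Delta)\sum_k\Theta_k\Lambda_k'$. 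Chaining this with the previous display gives the claim. The main obstacle is the bookkeeping in the middle step, namely attaching exactly the right indicator ($\Theta_k$ to the progress term, $\Theta_k'$ to the error/penalty term) so that the single collapsed inequality is valid simultaneously in all three cases of Lemma~\ref{lem:progress}; the strictness of the final bound then hinges precisely on the fact that $\bar\Delta'$ is defined in \eqref{eq:deltabar} to be a radius-grid point strictly exceeding $\gamma\bar\Delta$, which forces each retained term to be strictly larger than $h(\gamma\bar\Delta)$.
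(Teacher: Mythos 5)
Your proposal is correct and follows essentially the same route as the paper: telescope the decrease of $\phi$ using Assumption~\ref{assum:low_bound}, apply Lemma~\ref{lem:progress} case by case with the indicators $\Theta_k,\Theta_k',\Lambda_k'$, and then replace $h(\delta_k)$ by $h(\bar\Delta')>h(\gamma\bar\Delta)$ on the retained terms. The only cosmetic difference is that you keep $h(\delta_k)$ through the summation and discard terms at the end, whereas the paper substitutes $h(\bar\Delta')$ already in the per-iteration bound; the content is identical.
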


\begin{proof}
Notice $h(\cdot)$ (defined in \eqref{eq.h}) is a monotonically non-decreasing function, so $h(\Delta_k) \ge h(\bar\Delta')$ if $\Lambda_k' = 1$. 
By Lemma~\ref{lem:progress}, 
\[ \phi(X_k) - \phi(X_{k+1}) \ge \left\{ \begin{array}{ll}
    h(\bar\Delta') - \mathcal{E}_k - \mathcal{E}_k^+ - r &\text{if } \Theta_k \Lambda_k' = 1  \\
    - \mathcal{E}_k - \mathcal{E}_k^+ - r &\text{if } \Theta_k' = 1 \\
    0 &\text{otherwise.}
\end{array} \right. 
\]
Thus, 
\[  \phi(x_0) - \hat\phi \ge \phi(x_0) - \phi(X_T) = \sum_{k=0}^{T-1} \phi(X_k) - \phi(X_{k+1}) 
    \ge \sum_{k=0}^{T-1} \Theta_k \Lambda_k' h(\bar\Delta') - \sum_{k=0}^{T-1} \Theta_k' \left( \mathcal{E}_k + \mathcal{E}_k^+ + r \right).
\]
Since using $\bar\Delta'$ over-complicates later analysis, we derive a slightly weaker inequality in \eqref{eq:total_progress} by using the fact that $h(\bar\Delta') > h(\gamma \bar\Delta)$.
\end{proof}

The next lemma bounds the difference between the number of iterations with $\Theta_k(1 - \Lambda_k')=1$ or $(1-\Theta_k)\Lambda_k=1$, where the TR radius moves towards the interval $[\bar\Delta', \bar\Delta]$, and the number of iterations with $(1-\Theta_k)(1-\Lambda_k)=1$ or $\Theta_k \Lambda_k'=1$, where the TR radius moves away from this interval. 
\begin{lemma}\label{lem:total_succ_unsucc}
For any positive integer $T$, we have 
\begin{equation}\label{eq:total_succ_unsucc}
    \sum_{k=0}^{T-1} \Theta_k(1 - \Lambda_k') - (1-\Theta_k)(1-\Lambda_k) + (1-\Theta_k)\Lambda_k - \Theta_k \Lambda_k' 
    \le \left|\log_\gamma \frac{\bar\Delta'}{\delta_0} \right|
    < \left|\log_\gamma \frac{\bar\Delta}{\delta_0} \right| + 1. 
\end{equation}
\end{lemma}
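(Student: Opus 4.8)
The inequality \eqref{eq:total_succ_unsucc} is a purely deterministic, pathwise statement: once a realization of the algorithm is fixed, every indicator $\Theta_k$, $\Lambda_k$, $\Lambda_k'$ is determined, so I would prove it sample path by sample path. The plan is to reduce the four-term sum to the net displacement of a one-dimensional integer walk and then telescope. First I would record how the radius evolves. Since $\delta_0$ is the initial radius and every update multiplies $\delta_k$ by either $\gamma^{-1}$ or $\gamma$, the radius always lies on the geometric grid $\{\gamma^l\delta_0:l\in\mathbb{Z}\}$. Reading off the radius-update rule of Algorithm~\ref{alg:tr}, a successful step ($\Theta_k=1$, i.e.\ $\rho_k\ge\eta_1$ and $\|g_k\|\ge\eta_2\delta_k$) gives $\delta_{k+1}=\gamma^{-1}\delta_k$, while every other outcome ($\Theta_k=0$) gives $\delta_{k+1}=\gamma\delta_k$. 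Introducing $P_k:=\log_\gamma(\Delta_k/\delta_0)\in\mathbb{Z}$, this makes $\{P_k\}$ a $\pm1$ walk started at $P_0=0$, with $P_{k+1}=P_k-1$ when $\Theta_k=1$ and $P_{k+1}=P_k+1$ when $\Theta_k=0$.

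Next I would translate the two threshold indicators into comparisons of the integer $P_k$ against $n:=\log_\gamma(\bar\Delta'/\delta_0)$, which is itself an integer because $\bar\Delta'$ is a grid point by \eqref{eq:deltabar}. Since $\gamma\in(0,1)$, the inequality $\Delta_k\ge\bar\Delta'$ is equivalent to $P_k\le n$, so $\Lambda_k'=\mathbbm{1}\{P_k\le n\}$. For $\Lambda_k$ I would use that $\bar\Delta'$ is the smallest grid point exceeding $\gamma\bar\Delta$; because the grid has multiplicative ratio $1/\gamma$, this forces $\gamma\bar\Delta<\bar\Delta'\le\bar\Delta$ and hence $n=\lceil\log_\gamma(\bar\Delta/\delta_0)\rceil$. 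Combined with the integrality of $P_k$, the strict condition $\Delta_k>\bar\Delta$ becomes $P_k\le n-1$, so $\Lambda_k=\mathbbm{1}\{P_k\le n-1\}$.

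The crux is then a short case analysis showing that, in every case, the four-term summand equals the one-step decrease in distance to the level $n$, namely $|P_k-n|-|P_{k+1}-n|$. A move toward $n$ lowers the distance by $1$ and contributes $+1$, matching $\Theta_k(1-\Lambda_k')$ (a downward move from $P_k\ge n+1$) and $(1-\Theta_k)\Lambda_k$ (an upward move from $P_k\le n-1$); a move away raises the distance and contributes $-1$, matching $-\Theta_k\Lambda_k'$ (downward from $P_k\le n$) and $-(1-\Theta_k)(1-\Lambda_k)$ (upward from $P_k\ge n$). One checks the four cases are mutually exclusive and exhaustive given $(\Theta_k,P_k)$, so the summand is exactly $|P_k-n|-|P_{k+1}-n|$. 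Summing telescopes to $|P_0-n|-|P_T-n|=|n|-|P_T-n|\le|n|=\big|\log_\gamma(\bar\Delta'/\delta_0)\big|$, which is the first inequality. The final inequality follows from $\gamma\bar\Delta<\bar\Delta'\le\bar\Delta$: taking $\log_\gamma$ gives $\bar P\le n<\bar P+1$ with $\bar P:=\log_\gamma(\bar\Delta/\delta_0)$, and checking the two signs of $\bar P$ yields $|n|<|\bar P|+1$.

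I expect the main obstacle to be the bookkeeping that converts the two differently-shaped thresholds (the strict $\Delta_k>\bar\Delta$ and the non-strict $\Delta_k\ge\bar\Delta'$) into the clean integer pair $\{P_k\le n-1\}$ and $\{P_k\le n\}$, and then the verification of the summand-equals-distance-decrease identity precisely at the boundary positions $P_k\in\{n-1,n,n+1\}$, where the ``toward/away'' labeling is most delicate. I would also flag the degenerate regime $\bar\Delta\le 0$, in which $\bar\Delta'$ is not well defined; this arises only when $\min_{0\le k\le T-1}\|\nabla\phi(X_k)\|$ is already below the target accuracy, a case in which the surrounding complexity argument does not invoke this lemma, so it may be excluded here.
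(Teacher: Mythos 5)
Your proof is correct and takes essentially the same approach as the paper: both view $\log_\gamma(\Delta_k/\delta_0)$ as a unit-step walk and bound the sum by the initial log-distance of $\delta_0$ from $\bar\Delta'$, the paper doing so via two one-sided potentials $\max\{\log(\Delta_k/\bar\Delta'),0\}$ and $\max\{\log(\bar\Delta'/\Delta_k),0\}$ while you merge them into the single telescoping identity for $|P_k-n|$. Your version also makes explicit several points the paper leaves implicit (the integrality reduction $\Lambda_k=\mathbbm{1}\{P_k\le n-1\}$, $\Lambda_k'=\mathbbm{1}\{P_k\le n\}$ resting on $\gamma\bar\Delta<\bar\Delta'\le\bar\Delta$, the proof of the second inequality which the paper calls trivial, and the degenerate case $\bar\Delta\le 0$), all of which check out.
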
 

\begin{proof}
Consider the sequence
\[\zeta_k = \max\left\{ \log(\Delta_k / \bar\Delta'), 0 \right\}. 
\]
This non-negative value starts at $\zeta_0 = \max\left\{ \log(\delta_0 / \bar\Delta'), 0 \right\}$ and increases by $-\log\gamma$ in iteration $k$ if $\Theta_k \Lambda_k'= 1$ or decreases by $-\log\gamma$ if $(1 -\Theta_k) \Lambda_k = 1$. 
Other types of iterations do not affect this value. 
Thus, 
\[
    \zeta_T = \zeta_0 + \sum_{k=0}^{T-1} - \Theta_k \Lambda_k' \log\gamma + (1 -\Theta_k) \Lambda_k \log\gamma \ge 0,
    \] 
    from which it follows that, 
    \[
    \sum_{k=0}^{T-1} - \Theta_k \Lambda_k' + (1 -\Theta_k) \Lambda_k \le - \frac{\zeta_0}{\log\gamma} = \max \left\{ \log_\gamma \frac{\bar\Delta'}{\delta_0}, 0 \right\}. 
\]
Similarly, consider the sequence
\[ \zeta_k' = \max\left\{ \log(\bar\Delta' / \Delta_k), 0 \right\}. 
\] 
It decreases by $-\log \gamma$ if $\Theta_k(1 - \Lambda_k') = 1$ and increases by $-\log \gamma$ if $(1 -\Theta_k)(1-\Lambda_k) = 1$. 
Thus,
\[ 
    \zeta_T' = \zeta_0' + \sum_{k=0}^{T-1} - (1 -\Theta_k)(1-\Lambda_k) \log\gamma + \Theta_k(1 - \Lambda_k') \log\gamma \ge 0 \]
    from which it follows that, 
    \[    
    \sum_{k=0}^{T-1} - (1 -\Theta_k)(1-\Lambda_k) + \Theta_k(1 - \Lambda_k') \le - \frac{\zeta_0'}{\log\gamma} = \max \left\{ \log_\gamma \frac{\delta_0}{\bar\Delta'}, 0 \right\}. 
 \]
The first inequality in \eqref{eq:total_succ_unsucc} follows by combining the above two results. The second inequality is trivially true. As in the previous lemma, we relax the right-hand side to a function of $\bar\Delta$ instead of $\bar\Delta'$ in order to simplify the subsequent analysis.
\end{proof}

The last lemma uses the Azuma-Hoeffding inequality to establish a probabilistic lower bound on the number of iterations with sufficiently accurate models. 
\begin{lemma} \label{lem:Azuma}
For any positive integer $T$ and any $\hat{p}_1 \in [0,p_1]$, we have 
\begin{equation} \label{eq:Azuma}
    \mathbbm{P} \left\{ \sum_{k=0}^{T-1} I_k > \hat{p}_1T \right\} 
    \ge 1 - \exp \left( - \frac{(1 - \hat{p}_1/p_1)^2}{2} T \right). 
\end{equation} 
\end{lemma}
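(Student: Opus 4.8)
The plan is to apply the Azuma-Hoeffding inequality to the submartingale $\left\{\sum_{k=0}^{t-1}(I_k - p_1)\right\}$, whose defining property is established in Lemma~\ref{lem:true}. The random variables $I_k \in \{0,1\}$ satisfy $\mathbbm{P}\{I_k = 1 \mid \mathcal{F}_{k-1}\} \ge p_1$, so the centered sequence $D_k = I_k - \mathbbm{E}[I_k \mid \mathcal{F}_{k-1}]$ forms a bounded martingale difference sequence with $|D_k| \le 1$. The event $\left\{\sum_{k=0}^{T-1} I_k \le \hat{p}_1 T\right\}$ is a lower-tail event, and since the conditional means are each at least $p_1$, a deviation down to $\hat{p}_1 T$ requires the martingale to drop by at least $(p_1 - \hat{p}_1)T$ below its conditional-mean trajectory.

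\textbf{The key steps, in order.} First I would pass to the complementary event and write $\sum_{k=0}^{T-1} I_k \le \hat{p}_1 T$ as a deviation of the martingale $W_t = \sum_{k=0}^{t-1}(I_k - \mathbbm{E}[I_k\mid\mathcal{F}_{k-1}])$. Using $\mathbbm{E}[I_k\mid\mathcal{F}_{k-1}] \ge p_1$, the event $\{\sum I_k \le \hat{p}_1 T\}$ is contained in $\{W_{T} \le (\hat{p}_1 - p_1)T\}$, i.e. $\{W_T \le -(p_1 - \hat{p}_1)T\}$. Second, since $|D_k| \le 1$ (indeed $I_k$ and its conditional mean both lie in $[0,1]$, so the difference lies in $[-1,1]$), Azuma-Hoeffding gives
\begin{equation*}
  \mathbbm{P}\{W_T \le -\lambda\} \le \exp\!\left(-\frac{\lambda^2}{2T}\right)
\end{equation*}
with $\lambda = (p_1 - \hat{p}_1)T$. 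Third, I would substitute $\lambda = (p_1-\hat{p}_1)T$ to obtain an exponent of $-\frac{(p_1-\hat{p}_1)^2 T}{2}$, and finally factor $(p_1 - \hat{p}_1)^2 = p_1^2(1 - \hat{p}_1/p_1)^2$. To match the stated bound, one uses $p_1 \le 1$ to weaken $p_1^2(1-\hat{p}_1/p_1)^2 \ge$ --- wait, this goes the wrong direction, so instead I would note the paper's exponent $(1-\hat{p}_1/p_1)^2/2$ is obtained by a slightly different normalization; the cleanest route is to bound $|D_k|\le 1$ but rescale, or more directly to write the deviation as a fraction $1 - \hat{p}_1/p_1$ of the guaranteed drift $p_1 T$ and apply the multiplicative form. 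Taking complements of the final tail bound yields \eqref{eq:Azuma}.

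\textbf{The main obstacle} is reconciling the exact constant in the exponent: the naive Azuma bound produces $(p_1 - \hat{p}_1)^2/2$, whereas the claim states $(1 - \hat{p}_1/p_1)^2/2 = (p_1-\hat{p}_1)^2/(2p_1^2)$, which is \emph{larger} (since $p_1 \le 1$) and hence a \emph{weaker} bound. This suggests the intended argument normalizes the increments differently --- likely by working with the submartingale $\sum(I_k - p_1)$ directly and invoking a one-sided Hoeffding/Chernoff bound on sums of conditionally-Bernoulli variables where the effective range or the Chernoff exponent naturally carries the $p_1^2$ in the denominator. I would therefore verify whether the paper intends a multiplicative Chernoff bound for the lower tail of a sum with conditional success probability at least $p_1$, for which the exponent $\frac{(1-\hat{p}_1/p_1)^2 p_1 T}{2}$ or the stated form arises, and adjust the normalization of $D_k$ accordingly so the final constant is exactly as claimed.
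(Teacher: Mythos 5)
Your setup is right --- the paper also applies Azuma--Hoeffding to the submartingale $\bigl\{\sum_{k=0}^{t-1} I_k - p_1 t\bigr\}$ from Lemma~\ref{lem:true} --- but your proof does not close, and the place where it stalls is exactly the constant you flag at the end. With the crude increment bound $|D_k|\le 1$ you get a failure-probability bound of $\exp\bigl(-(p_1-\hat p_1)^2T/2\bigr)$, whereas the lemma claims $\exp\bigl(-(p_1-\hat p_1)^2T/(2p_1^2)\bigr)$. Since $p_1\le 1$, the claimed exponent is the \emph{larger} one, so the claimed bound on the failure probability is \emph{smaller}, i.e.\ the lemma is a \emph{strictly stronger} statement than what your argument delivers (you call it ``weaker'' at one point, which has the direction reversed, though you then correctly notice the implication goes the wrong way). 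You end by saying you would ``verify'' and ``adjust the normalization accordingly,'' but you never actually produce the adjustment, so the proof is incomplete.

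The missing ingredient is simple: the increments of the submartingale are $I_t - p_1$ with $I_t\in\{0,1\}$, hence
\[
|I_t - p_1| \le \max\{p_1,\,1-p_1\} = p_1,
\]
where the last equality uses $p_1 \ge 1/2$, which is guaranteed by the definition of Oracle~\ref{oracle.first} ($p_1\in[0.5,1]$). Applying the one-sided Azuma--Hoeffding inequality with increment bounds $c_t = p_1$ (rather than $c_t=1$) gives
\[
\mathbbm{P}\left\{ \sum_{k=0}^{T-1} I_k - Tp_1 \le -\epsilon \right\} \le \exp\left(-\frac{\epsilon^2}{2Tp_1^2}\right),
\]
and setting $\epsilon=(p_1-\hat p_1)T$ produces exactly the exponent $-(1-\hat p_1/p_1)^2T/2$. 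Note also that centering by the conditional mean $\mathbbm{E}[I_k\mid\mathcal{F}_{k-1}]$, as you do, would destroy this: the difference $I_k-\mathbbm{E}[I_k\mid\mathcal{F}_{k-1}]$ can only be bounded by $\max\{q_k,1-q_k\}$ where $q_k=\mathbbm{E}[I_k\mid\mathcal{F}_{k-1}]$ may exceed $p_1$, so one must work with the submartingale increments $I_k-p_1$ directly, as the paper does.
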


\begin{proof}
Consider the submartingale $\left\{ \sum_{k=0}^{t-1} I_k - p_1 t \right\}_{t=0,1,\dots}$. 
Since 
\[ \left| \left( \sum_{k=0}^{(t+1)-1} I_k - p_1 (t+1) \right) - \left( \sum_{k=0}^{t-1} I_k - p_1 t \right) \right| = |I_t - p_1| \le \max\{|0-p_1|, |1-p_1|\} = p_1
\] 
for any $t \in \mathbb{N}$, by the Azuma-Hoeffding inequality, we have for any positive integer $T$ and any positive real $c$
\[ \mathbbm{P} \left\{ \sum_{k=0}^{T-1} I_k - Tp_1 \le -c \right\} 
    \le \exp \left( - \frac{c^2}{2Tp_1^2} \right). 
\]
Setting $c = (p_1 - \hat{p}_1) T$ and subtracting 1 from both sides yields the result.
\end{proof}

\subsection{Convergence Analysis: The Bounded Noise Case} \label{sec:convergence1_bounded}
We present in this subsection our result on Algorithm~\ref{alg:tr} with Oracle~\ref{oracle.zero}.\ref{ass:bounded noise}. 
The following lemma combines the inequalities from Lemmas~\ref{lem:total_progress}-\ref{lem:Azuma}. 

\begin{lemma} \label{lem:combine1bounded}
For any positive integer $T$ and any $\hat{p}_1 \in [0,p_1]$, we have 
\begin{equation} \label{eq:combine1bounded}
    \mathbbm{P} \left\{ \left(\hat{p}_1 - \frac{1}{2} - \frac{2\epsilon_f+r}{h(\gamma \bar\Delta)} \right) T < \frac{\phi(x_0) - \hat\phi}{h(\gamma \bar\Delta)} + \frac{1}{2}\left|\log_\gamma \frac{\bar\Delta}{\delta_0}\right| + \frac{1}{2} \right\}
    \ge 1 - \exp \left( - \frac{(1 - \hat{p}_1/p_1)^2}{2} T \right). 
\end{equation}
\end{lemma}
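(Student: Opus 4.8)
The plan is to condition on the high-probability event supplied by Lemma~\ref{lem:Azuma} and then argue deterministically on that event, so that the only randomness entering the final bound is the Azuma--Hoeffding tail. First I would specialize Lemmas~\ref{lem:total_progress}--\ref{lem:Azuma} to Oracle~\ref{oracle.zero}.\ref{ass:bounded noise}: here $J_k=1$ for every $k$ and $\mathcal{E}_k+\mathcal{E}_k^+\le 2\epsilon_f$ always holds, so the progress bound of Lemma~\ref{lem:total_progress} becomes $h(\gamma\bar\Delta)\sum_{k=0}^{T-1}\Theta_k\Lambda_k' < \phi(x_0)-\hat\phi + (2\epsilon_f+r)\sum_{k=0}^{T-1}\Theta_k'$, and since each $\Theta_k'\in\{0,1\}$ I may replace $\sum_k\Theta_k'$ by $T$. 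This yields a clean upper bound $\sum_k\Theta_k\Lambda_k' < \frac{\phi(x_0)-\hat\phi}{h(\gamma\bar\Delta)} + \frac{(2\epsilon_f+r)T}{h(\gamma\bar\Delta)}$.

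The crux is a counting inequality linking the number of accurate models to the step-type counts. Using Corollary~\ref{cor.1st}, which (since $J_k=1$) gives $I_k(1-\Lambda_k)\le\Theta_k$, equivalently $I_k(1-\Lambda_k)\le\Theta_k(1-\Lambda_k)$, together with the trivial $I_k\Lambda_k\le\Lambda_k=\Theta_k\Lambda_k+(1-\Theta_k)\Lambda_k$, I would split $\sum_k I_k=\sum_k I_k(1-\Lambda_k)+\sum_k I_k\Lambda_k$ and collapse $\Theta_k(1-\Lambda_k)+\Theta_k\Lambda_k=\Theta_k$ to obtain the key bound $\sum_k I_k \le \sum_k\Theta_k + \sum_k(1-\Theta_k)\Lambda_k$. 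Separately, I would algebraically regroup the left-hand side of Lemma~\ref{lem:total_succ_unsucc} into the form $2\sum_k\Theta_k + 2\sum_k(1-\Theta_k)\Lambda_k - 2\sum_k\Theta_k\Lambda_k' - T$, so that Lemma~\ref{lem:total_succ_unsucc}, after dividing by two, reads $\sum_k\Theta_k+\sum_k(1-\Theta_k)\Lambda_k < \frac{T}{2} + \frac12\left|\log_\gamma(\bar\Delta/\delta_0)\right| + \frac12 + \sum_k\Theta_k\Lambda_k'$.

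Chaining the three pieces is then routine: on the Azuma event $\{\sum_k I_k > \hat p_1 T\}$ I combine $\hat p_1 T < \sum_k I_k \le \sum_k\Theta_k+\sum_k(1-\Theta_k)\Lambda_k$ with the regrouped Lemma~\ref{lem:total_succ_unsucc} bound and then the specialized Lemma~\ref{lem:total_progress} bound on $\sum_k\Theta_k\Lambda_k'$; collecting terms and isolating $T$ reproduces the claimed inequality, while the probability $1-\exp(-(1-\hat p_1/p_1)^2T/2)$ is exactly the Azuma tail from Lemma~\ref{lem:Azuma}. The main obstacle is getting the constants right in the two counting steps. The bound $\sum_k I_k\le\sum_k\Theta_k+\sum_k(1-\Theta_k)\Lambda_k$ must be the \emph{tight} one: the naive estimate $\sum_k I_k\le\sum_k\Theta_k+\sum_k\Lambda_k$ leaves a spurious $\sum_k\Theta_k\Lambda_k$ term and inflates the $\frac{\phi(x_0)-\hat\phi}{h(\gamma\bar\Delta)}$ and $\frac{2\epsilon_f+r}{h(\gamma\bar\Delta)}$ coefficients by a factor of two. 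Likewise, the regrouping of Lemma~\ref{lem:total_succ_unsucc} must isolate exactly the combination $\sum_k\Theta_k+\sum_k(1-\Theta_k)\Lambda_k$ and a single copy of $\sum_k\Theta_k\Lambda_k'$, so that dividing by two produces precisely the $\frac12$ coefficients multiplying $T$, $|\log_\gamma(\bar\Delta/\delta_0)|$, and the constant in the statement.
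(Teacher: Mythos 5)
Your proposal is correct and follows essentially the same route as the paper: your key counting bound $\sum_k I_k \le \sum_k\Theta_k + \sum_k(1-\Theta_k)\Lambda_k$ is exactly the complement of the paper's step $\sum_k(1-\Theta_k)(1-\Lambda_k)\le\sum_k(1-I_k)$ (via Corollary~\ref{cor.1st} and $J_k\equiv 1$), and the rest combines Lemmas~\ref{lem:total_progress}, \ref{lem:total_succ_unsucc} and \ref{lem:Azuma} with the same weights. Your presentation—arguing deterministically on the Azuma event rather than through the paper's chain of probability inequalities—is a purely organizational difference.
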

\begin{proof}
Multiply 
\[ \sum_{k=0}^{T-1} \Theta_k(1 - \Lambda_k') + (1-\Theta_k)(1-\Lambda_k) + (1-\Theta_k)\Lambda_k + \Theta_k \Lambda_k' = \sum_{k=0}^{T-1} 1 = T 
\]
by $(2\epsilon_f+r) / h(\gamma \bar\Delta) - 0.5$ and \eqref{eq:total_succ_unsucc} by 0.5, then add the results together to obtain
\begin{equation} \label{eq:temp1}
    \frac{2\epsilon_f+r}{h(\gamma \bar\Delta)} \left[ \sum_{k=0}^{T-1} 1 \right] - \left[ \sum_{k=0}^{T-1} (1 - \Theta_k)(1 - \Lambda_k) + \Theta_k \Lambda_k' \right] 
    < \left( \frac{2\epsilon_f+r}{h(\gamma \bar\Delta)} - \frac{1}{2} \right) T + \frac{1}{2} \left|\log_\gamma\frac{\bar\Delta}{\delta_0}\right| + \frac{1}{2}. 
\end{equation}
As a first step to proving \eqref{eq:combine1bounded}, we derive the following probabilistic bound: 
\[ \begin{aligned}
&\mathbbm{P}\left\{ \sum_{k=0}^{T-1} - \Theta_k \Lambda_k' + \frac{2\epsilon_f+r}{h(\gamma \bar\Delta)} \Theta_k' < \left(\frac{2\epsilon_f+r}{h(\gamma \bar\Delta)} + \frac{1}{2} - \hat{p}_1 \right)T + \frac{1}{2} \left|\log_\gamma \frac{\bar\Delta}{\delta_0}\right| + \frac{1}{2} \right\} \\
&\ge \mathbbm{P}\left\{ \left[ \sum_{k=0}^{T-1} (1 - \Theta_k) (1 - \Lambda_k) \right] - \frac{2\epsilon_f+r}{h(\gamma \bar\Delta)} \left[ \sum_{k=0}^{T-1} 1 - \Theta_k' \right] < (1 - \hat{p}_1) T \text{ and \eqref{eq:temp1} holds. } \right\} \\
&= \mathbbm{P}\left\{ \left[ \sum_{k=0}^{T-1} (1 - \Theta_k) (1 - \Lambda_k) \right] - \frac{2\epsilon_f+r}{h(\gamma \bar\Delta)} \left[ \sum_{k=0}^{T-1} 1 - \Theta_k' \right] < (1 - \hat{p}_1) T \right\} \\
&\ge \mathbbm{P}\left\{ \sum_{k=0}^{T-1} (1 - \Theta_k) (1 - \Lambda_k) + (1-I_k)\Theta_k + (1-I_k)(1-\Theta_k)\Lambda_k < (1 - \hat{p}_1) T \right\} \\
&= \mathbbm{P}\left\{ \sum_{k=0}^{T-1} I_k(1 - \Theta_k) (1 - \Lambda_k) + (1-I_k) < (1 - \hat{p}_1) T \right\} \\
&= \mathbbm{P}\left\{ \sum_{k=0}^{T-1} (1-I_k) < (1 - \hat{p}_1) T \right\} 
= \mathbbm{P}\left\{ \sum_{k=0}^{T-1} I_k > \hat{p}_1 T \right\} \\
&\leftstackrel{\eqref{eq:Azuma}}{\ge} 1 - \exp \left( - \frac{(1 - \hat{p}_1/p_1)^2}{2} T \right). 
\end{aligned} \]
The first inequality holds because the event in the first line is an inequality obtained  by the sum  of \eqref{eq:temp1} with the first inequality in the second line.
The second inequality holds because the left-hand side of the inequality in the fourth line is greater than or equal to the left-hand side of the inequality in the third line. 
The second last equality holds since $J_k = 1$ for all $k$ and then $\sum_{k=0}^{T-1} I_k(1 - \Theta_k) (1 - \Lambda_k) = 0$ due to Corollary~\ref{cor.1st}. 

Meanwhile, by Lemma~\ref{lem:total_progress} and Oracle~\ref{oracle.zero}.\ref{ass:bounded noise}, we have 
\[ \begin{aligned}
h(\gamma \bar\Delta) \sum_{k=0}^{T-1} \Theta_k \Lambda_k' &\le \phi(x_0) - \hat\phi + (2\epsilon_f+r) \sum_{k=0}^{T-1} \Theta_k' \\
\text{or equivalently } \quad\quad
- \frac{\phi(x_0) - \hat{\phi}}{h(\gamma \bar\Delta)} &\le -\sum_{k=0}^{T-1}  \Theta_k \Lambda_k' + \frac{2\epsilon_f+r}{h(\gamma \bar\Delta)} \Theta_k'. 
\end{aligned} \]
Combining the above two results, it follows that
\[ \mathbbm{P}\{\text{A}\} =
\mathbbm{P}\left\{ - \frac{\phi(x_0) - \hat{\phi}}{h(\gamma \bar\Delta)} <  \left(\frac{2\epsilon_f+r}{h(\gamma \bar\Delta)} + \frac{1}{2} - \hat{p}_1 \right)T + \frac{1}{2} \left|\log_\gamma\frac{\bar\Delta}{\delta_0}\right| + \frac{1}{2} \right\}
\ge 1 - \exp \left( - \frac{(1 - \hat{p}_1/p_1)^2}{2} T \right). 
\]
\end{proof}

Our main theorem follows the above lemma. 
\begin{theorem} \label{thm:convergence1_bounded}
Let Assumptions~\ref{assum:lip_cont} and \ref{assum:low_bound} hold for the objective function $\phi$. 
Let Assumptions~\ref{assum:bhm} and $r\ge 2\epsilon_f$ hold for Algorithm~\ref{alg:tr}. 
Given any $\epsilon > \sqrt{\frac{4\epsilon_f+2r}{C_3 \gamma^2 C_1^2 (2p_1-1)}} + \frac{C_2}{C_1} \epsilon_g$, where $C_1$, $C_2$ and $C_3$ are defined in \eqref{eq:c1c2} and \eqref{eq.h}, the sequence of iterates generated by Algorithm~\ref{alg:tr} with Oracle~\ref{oracle.zero}.\ref{ass:bounded noise} satisfies
\begin{equation} \label{eq:convergence1_bounded P}
    \mathbbm{P} \left\{ \min_{0\le k \le T-1} \|\nabla\phi(X_k)\| \le \epsilon \right\} \ge 1 - \exp \left( - \frac{(1 - \hat{p}_1/p_1)^2}{2} T \right) 
\end{equation}
for any $\hat{p}_1 \in \left(\frac{1}{2} + \frac{2\epsilon_f+r}{C_3\gamma^2 (C_1 \epsilon-C_2\epsilon_g)^2}, p_1 \right]$ and any  
\begin{equation} \begin{aligned}\label{eq:convergence1_bounded T} 
    T \ge& \left(\hat{p}_1 - \frac{1}{2} - \frac{2\epsilon_f+r}{C_3 \gamma^2 (C_1 \epsilon - C_2\epsilon_g)^2} \right)^{-1} \\ 
    &\left[\frac{\phi(x_0) - \hat\phi}{C_3 \gamma^2 (C_1 \epsilon-C_2\epsilon_g)^2} + \frac{1}{2}\log_\gamma \left( 
        \min\left\{ \frac{C_1 \epsilon-C_2\epsilon_g}{\delta_0}, \frac{\delta_0}{C_1\|\nabla\phi(x_0)\|-C_2\epsilon_g} \right\} 
    \right) + \frac{1}{2}\right]. 
\end{aligned} \end{equation}
\end{theorem}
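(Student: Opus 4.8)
The plan is to argue via the complementary event and reduce everything to the high-probability inequality already established in Lemma~\ref{lem:combine1bounded}. Write $E=\{\min_{0\le k\le T-1}\|\nabla\phi(X_k)\|\le\epsilon\}$ and let $G$ denote the event whose probability is bounded below in \eqref{eq:combine1bounded}, so that Lemma~\ref{lem:combine1bounded} gives $\mathbbm{P}\{G\}\ge 1-\exp(-\tfrac{(1-\hat p_1/p_1)^2}{2}T)$. It then suffices to prove the deterministic set inclusion $E^c\subseteq G^c$: taking complements yields $\mathbbm{P}\{E^c\}\le\mathbbm{P}\{G^c\}\le\exp(-\tfrac{(1-\hat p_1/p_1)^2}{2}T)$, which is exactly \eqref{eq:convergence1_bounded P}. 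Thus the whole argument becomes a pointwise implication: on every realization in $E^c$, the inequality defining $G$ must fail.

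I would then exploit the structure of the random threshold $\bar\Delta=C_1\min_{0\le k\le T-1}\|\nabla\phi(X_k)\|-C_2\epsilon_g$. On $E^c$ we have $\min_k\|\nabla\phi(X_k)\|>\epsilon$, hence $\bar\Delta>C_1\epsilon-C_2\epsilon_g$, a quantity that is positive since $\epsilon>\tfrac{C_2}{C_1}\epsilon_g$. Simultaneously, because $X_0=x_0$ we always have $\min_k\|\nabla\phi(X_k)\|\le\|\nabla\phi(x_0)\|$, giving the upper bound $\bar\Delta\le C_1\|\nabla\phi(x_0)\|-C_2\epsilon_g$. Using that $h(\delta)=C_3\delta^2$ is increasing, the lower bound on $\bar\Delta$ immediately controls the two places $h(\gamma\bar\Delta)$ appears: the coefficient $\hat p_1-\tfrac12-\tfrac{2\epsilon_f+r}{h(\gamma\bar\Delta)}$ is bounded below by $c:=\hat p_1-\tfrac12-\tfrac{2\epsilon_f+r}{C_3\gamma^2(C_1\epsilon-C_2\epsilon_g)^2}$, and $\tfrac{\phi(x_0)-\hat\phi}{h(\gamma\bar\Delta)}$ is bounded above by $\tfrac{\phi(x_0)-\hat\phi}{C_3\gamma^2(C_1\epsilon-C_2\epsilon_g)^2}$.

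The main obstacle is the absolute-value term $\tfrac12\left|\log_\gamma(\bar\Delta/\delta_0)\right|$, since $\bar\Delta$ may lie on either side of $\delta_0$. I would split into the cases $\bar\Delta\ge\delta_0$ and $\bar\Delta<\delta_0$: since $\log_\gamma$ is decreasing for $\gamma\in(0,1)$, the former is controlled by the upper bound $\bar\Delta\le C_1\|\nabla\phi(x_0)\|-C_2\epsilon_g$ and the latter by the lower bound $\bar\Delta>C_1\epsilon-C_2\epsilon_g$, yielding in both cases
\[
\left|\log_\gamma\frac{\bar\Delta}{\delta_0}\right|\le \log_\gamma\left(\min\left\{\frac{C_1\epsilon-C_2\epsilon_g}{\delta_0},\ \frac{\delta_0}{C_1\|\nabla\phi(x_0)\|-C_2\epsilon_g}\right\}\right),
\]
which is precisely the logarithmic term appearing in the lower bound \eqref{eq:convergence1_bounded T}. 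Recognizing that the $\min$ inside $\log_\gamma$ encodes the worst case over both directions of the absolute value is the crux of matching the theorem's stated $T$.

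Finally, assembling these estimates, on $E^c$ the left-hand side of the $G$-inequality strictly exceeds $cT$, while its right-hand side is at most $D:=\tfrac{\phi(x_0)-\hat\phi}{C_3\gamma^2(C_1\epsilon-C_2\epsilon_g)^2}+\tfrac12\log_\gamma(\min\{\cdots\})+\tfrac12$. The hypothesis $\hat p_1>\tfrac12+\tfrac{2\epsilon_f+r}{C_3\gamma^2(C_1\epsilon-C_2\epsilon_g)^2}$ guarantees $c>0$, and the stated lower bound on $T$ is exactly $T\ge D/c$, i.e. $cT\ge D$; hence the left-hand side strictly exceeds the right-hand side and the defining inequality of $G$ fails, establishing $E^c\subseteq G^c$. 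I would close by observing that the lower bound imposed on $\epsilon$ is equivalent to $\tfrac12+\tfrac{2\epsilon_f+r}{C_3\gamma^2(C_1\epsilon-C_2\epsilon_g)^2}<p_1$, i.e. it is exactly the condition making the admissible interval for $\hat p_1$ nonempty, so the theorem's hypotheses are self-consistent.
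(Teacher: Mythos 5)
Your proposal is correct and takes essentially the same route as the paper's proof: both reduce the theorem to Lemma~\ref{lem:combine1bounded}, exploit the bounds $C_1\epsilon - C_2\epsilon_g < \bar\Delta \le C_1\|\nabla\phi(x_0)\| - C_2\epsilon_g$ valid on the event $\{\min_{0\le k\le T-1}\|\nabla\phi(X_k)\|>\epsilon\}$, and split on whether $\bar\Delta$ lies above or below $\delta_0$ to control the absolute-value logarithm via the $\min$ inside $\log_\gamma$. The paper merely packages the same argument as a contradiction through the auxiliary function $Q$ and its monotonicity, and dispatches the degenerate case $\bar\Delta\le 0$ separately, which your direct inclusion $E^c\subseteq G^c$ makes unnecessary since $\bar\Delta>0$ automatically on $E^c$.
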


\begin{proof}
Recall $\bar\Delta $ is defined as $C_1\min_{0\le k \le T-1} \|\nabla\phi(X_k)\| - C_2 \epsilon_g$. 
If $\bar\Delta\le0$, then $\min_{0\le k \le T-1} \|\nabla\phi(X_k)\| \le C_2 \epsilon_g / C_1$, and the result is trivially true. 
Now assume $\bar\Delta>0$. Consider the univariate function
\[ Q(y) = \left(\hat{p}_1 - \frac{1}{2} - \frac{2\epsilon_f+r}{C_3 \gamma^2 (C_1 y - C_2 \epsilon_g)^2} \right)^{-1}
\left[ \frac{\phi(x_0) - \hat\phi}{C_3 \gamma^2 (C_1 y - C_2 \epsilon_g)^2} + \frac{1}{2}\left|\log_\gamma \frac{C_1 y - C_2 \epsilon_g}{\delta_0}\right| + \frac{1}{2} \right]. 
\]
The probabilistic bound \eqref{eq:combine1bounded} we proved in Lemma~\ref{lem:combine1bounded} is
\[ \mathbbm{P} \left\{ T < Q\left(\min_{0\le k\le T-1} \|\nabla\phi(X_k)\|\right) \right\}
\ge 1 - \exp \left( - \frac{(1 - \hat{p}_1/p_1)^2}{2} T \right), 
\] 
and \eqref{eq:convergence1_bounded T} implies $T \ge Q(\epsilon)$. 
To prove \eqref{eq:convergence1_bounded P}, we only need to show that $T < Q\left(\min_{0\le k\le T-1} \|\nabla\phi(X_k)\|\right)$ implies $\min_{0\le k \le T-1} \|\nabla\phi(X_k)\| \le \epsilon$. 
Suppose, for the sake of contradiction, that $T < Q\left(\min_{0\le k\le T-1} \|\nabla\phi(X_k)\|\right)$ but $\min_{0\le k \le T-1} \|\nabla\phi(X_k)\| > \epsilon$. 

If $\bar\Delta \le \delta_0$, we have $C_1\epsilon - C_2\epsilon_g \le C_1\min_{0\le k \le T-1} \|\nabla\phi(X_k)\| - C_2\epsilon_g = \bar\Delta \le \delta_0$ and the function $Q$ decreases monotonically between $\epsilon$ and $\min_{0\le k \le T-1} \|\nabla\phi(X_k)\|$ (recall $0<\gamma<1$). 
It follows that 
$ Q(\epsilon) > Q(\min_{0\le k \le T-1} \|\nabla\phi(X_k)\|), 
$
which contradicts 
$  Q(\epsilon) \stackrel{\eqref{eq:convergence1_bounded T}}{\le} T < Q(\min_{0\le k \le T-1} \|\nabla\phi(X_k)\|). 
$

Alternatively, if $\bar\Delta > \delta_0$, the condition $T < Q\left(\min_{0\le k\le T-1} \|\nabla\phi(X_k)\|\right)$ implies 
\[ \begin{aligned}
    T &< \left(\hat{p}_1 - \frac{1}{2} - \frac{2\epsilon_f+r}{h(\gamma \bar\Delta)} \right)^{-1} \left[ \frac{\phi(x_0) - \hat\phi}{h(\gamma \bar\Delta)} + \frac{1}{2} \log_\gamma \frac{\delta_0}{\bar\Delta} + \frac{1}{2} \right] \\
    &\le \left(\hat{p}_1 - \frac{1}{2} - \frac{2\epsilon_f+r}{h(\gamma \bar\Delta)} \right)^{-1} \left[ \frac{\phi(x_0) - \hat\phi}{h(\gamma \bar\Delta)} + \frac{1}{2} \log_\gamma \frac{\delta_0}{C_1\|\nabla\phi(x_0)\| - C_2\epsilon_g} + \frac{1}{2} \right] \\
    &< \left(\hat{p}_1 - \frac{1}{2} - \frac{2\epsilon_f+r}{C_3 \gamma^2 (C_1 \epsilon - C_2\epsilon_g)^2} \right)^{-1} \left[ \frac{\phi(x_0) - \hat\phi}{C_3 \gamma^2 (C_1 \epsilon - C_2\epsilon_g)^2} + \frac{1}{2} \log_\gamma \frac{\delta_0}{C_1\|\nabla\phi(x_0)\| - C_2\epsilon_g} + \frac{1}{2} \right], 
\end{aligned} \] 
where the last inequality holds because of the assumption $\min_{0\le k \le T-1} \|\nabla\phi(X_k)\| > \epsilon$ and the fact that the function 
\[ Q'(y) = \left(\hat{p}_1 - \frac{1}{2} - \frac{2\epsilon_f+r}{C_3 \gamma^2 (C_1 y - C_2 \epsilon_g)^2} \right)^{-1}
\left[ \frac{\phi(x_0) - \hat\phi}{C_3 \gamma^2 (C_1 y - C_2 \epsilon_g)^2} + \frac{1}{2}\log_\gamma \frac{\delta_0}{C_1\|\nabla\phi(x_0)\| - C_2\epsilon_g} + \frac{1}{2} \right] 
\]
decreases monotonically on $[\epsilon,+\infty)$. 
However, this contradicts \eqref{eq:convergence1_bounded T}. 
Thus, $T < Q\left(\min_{0\le k\le T-1} \|\nabla\phi(X_k)\|\right)$ implies $\min_{0\le k \le T-1} \|\nabla\phi(X_k)\| \le \epsilon$. 
\end{proof}

\begin{remark}
    The bound \eqref{eq:convergence1_bounded T} is rather complicated. Notice it can also be written as 
    \begin{equation} \begin{aligned} 
        T \ge& \frac{\phi(x_0) - \hat\phi}{(\hat{p}_1 - 0.5)C_3 \gamma^2(C_1 \epsilon - C_2\epsilon_g)^2 - 2\epsilon_f- r} \\
        &+ \frac{\frac{1}{2} \log_\gamma \left( \min\left\{ \frac{C_1 \epsilon-C_2\epsilon_g}{\delta_0}, \frac{\delta_0}{C_1\|\nabla\phi(x_0)\|-C_2\epsilon_g} \right\}\right) + \frac{1}{2}}{(\hat{p}_1 - 0.5)C_3 \gamma^2(C_1 \epsilon - C_2\epsilon_g)^2 - 2\epsilon_f- r} C_3 \gamma^2 (C_1 \epsilon - C_2\epsilon_g)^2.
    \end{aligned} \end{equation}
    The first term on the right-hand side is term that results in $\mathcal{O}(\epsilon^{-2})$ complexity, which is typical (and optimal) for first-order optimization algorithms on smooth nonconvex functions. 
    The second term represents the number of iterations the algorithm takes to adjust the TR radius to a desired level. 
    If the initial radius $\delta_0$ is too big, then $\delta_0/(C_1\epsilon-C_2\epsilon_f)$ is larger and the second term represents the number of iterations needed for the TR to shrink to a level that can achieve a solution with $\epsilon$-accuracy. 
    Alternatively, if the initial radius is too small, then $(C_1\|\nabla\phi(x_0)\|-C_2\epsilon_g) / \delta_0$ is larger and the second term represents the number of iterations needed for the TR to expand to a level so that meaningful steps can be taken. 
    Since $\epsilon$ is always assumed to be small, the second term is typically much smaller than the first term. 
    Additionally, regarding the best achievable accuracy, condition $\epsilon > \sqrt{\frac{4\epsilon_f + 2r}{C_3 \gamma^2 C_1^2 (2p_1-1)}} + \frac{C_2}{C_1}\epsilon_g$ together with $r\ge 2\epsilon_f$ gives a lower bound that can be roughly summarized as $\epsilon\geq{\cal O}(\sqrt{\epsilon_f}) + {\cal O}(\epsilon_g)$. 
\end{remark}

The above theorem has a ``moving component" in $\hat{p}_1$. 
Maximizing the right-hand side of \eqref{eq:convergence1_bounded P} over $\hat{p}_1$ subject to the constraint \eqref{eq:convergence1_bounded T} gives us the optimal value for $\hat{p}_1$, which is 
\[ \frac{1}{2} + \frac{2\epsilon_f+r}{C_3 \gamma^2 (C_1\epsilon-C_2\epsilon_g)^2} + \frac{1}{T}  \left[\frac{\phi(x_0) - \hat\phi}{C_3 \gamma^2 (C_1\epsilon-C_2\epsilon_g)^2} + \frac{1}{2}\log_\gamma \left( 
    \min\left\{ \frac{C_1 \epsilon-C_2\epsilon_g}{\delta_0}, \frac{\delta_0}{C_1\|\nabla\phi(x_0)\|-C_2\epsilon_g} \right\} 
\right) + \frac{1}{2}\right].
\]
By setting $\hat{p}_1$ to this value, we have the following corollary to Theorem~\ref{thm:convergence1_bounded}. 
\begin{corollary}
Under the settings of Theorem~\ref{thm:convergence1_bounded}, given any $\epsilon > \sqrt{\frac{4\epsilon_f+2r}{C_3 \gamma^2 C_1^2 (2p_1-1)}} + \frac{C_2}{C_1} \epsilon_g$, it follows that
{\small 
\begin{multline*} 
\mathbbm{P} \left\{ \min_{0\le k \le T-1} \|\nabla\phi(X_k)\| \le \epsilon \right\} \ge 1 - \exp \left( - \frac{1}{2p_1^2 T} \left\{ \left(p_1 - \frac{1}{2} - \frac{2\epsilon_f+r}{C_3\gamma^2 (C_1\epsilon-C_2\epsilon_g)^2} \right) T \right. \right. \\
\left. \left. - \left[ \frac{\phi(x_0) - \hat\phi}{C_3\gamma^2 (C_1\epsilon-C_2\epsilon_g)^2} + \log_\gamma \left( 
    \min\left\{ \frac{C_1 \epsilon-C_2\epsilon_g}{\delta_0}, \frac{\delta_0}{C_1\|\nabla\phi(x_0)\|-C_2\epsilon_g} \right\} 
\right) + \frac{1}{2} \right] \right\}^2 \right)
\end{multline*}
}
for any $T$ following \eqref{eq:convergence1_bounded T} with the optimal $\hat{p}_1$. 
\end{corollary}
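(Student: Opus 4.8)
The plan is to treat the conclusion of Theorem~\ref{thm:convergence1_bounded} as a family of lower bounds indexed by the free parameter $\hat{p}_1$, with $T$ held fixed, and to optimize the right-hand side of \eqref{eq:convergence1_bounded P} over $\hat{p}_1$ subject to the admissibility constraint \eqref{eq:convergence1_bounded T}. To streamline the bookkeeping I would abbreviate $\alpha_\epsilon = \frac{1}{2} + \frac{2\epsilon_f+r}{C_3 \gamma^2 (C_1\epsilon-C_2\epsilon_g)^2}$ and let $B_\epsilon$ denote the bracketed quantity appearing in \eqref{eq:convergence1_bounded T}, so that Theorem~\ref{thm:convergence1_bounded} reads: for every $\hat{p}_1 \in (\alpha_\epsilon, p_1]$ and every $T \ge (\hat{p}_1 - \alpha_\epsilon)^{-1} B_\epsilon$, the bound \eqref{eq:convergence1_bounded P} holds with failure probability $\exp\!\big(-\tfrac{(1-\hat{p}_1/p_1)^2}{2}T\big)$.

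First I would record the monotonicity of the bound in $\hat{p}_1$. On the admissible interval $\hat{p}_1 \in (\alpha_\epsilon, p_1]$ the quantity $1-\hat{p}_1/p_1$ is positive and strictly decreasing, hence $(1-\hat{p}_1/p_1)^2$ is strictly decreasing, the exponent $-\tfrac{(1-\hat{p}_1/p_1)^2}{2}T$ is increasing, and therefore the guaranteed lower bound $1-\exp(\cdot)$ in \eqref{eq:convergence1_bounded P} is strictly \emph{decreasing} in $\hat{p}_1$. Consequently, for a fixed $T$ the tightest guarantee is obtained by taking $\hat{p}_1$ as small as the constraint permits. Rearranging \eqref{eq:convergence1_bounded T} (using $\hat{p}_1 > \alpha_\epsilon$ and $B_\epsilon > 0$) gives the equivalent form $\hat{p}_1 \ge \alpha_\epsilon + B_\epsilon/T$, so the smallest admissible value is $\hat{p}_1^\star = \alpha_\epsilon + B_\epsilon/T$, which is exactly the optimizer displayed before the corollary.

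Next I would verify that $\hat{p}_1^\star$ actually lies in $(\alpha_\epsilon, p_1]$ so that Theorem~\ref{thm:convergence1_bounded} may be invoked with this choice. The hypothesis $\epsilon > \sqrt{\frac{4\epsilon_f+2r}{C_3 \gamma^2 C_1^2 (2p_1-1)}} + \frac{C_2}{C_1}\epsilon_g$ is equivalent, after squaring, to $\alpha_\epsilon < p_1$, so the interval is nonempty. The strict inequality $\hat{p}_1^\star > \alpha_\epsilon$ is immediate from $B_\epsilon, T > 0$, while $\hat{p}_1^\star \le p_1$ amounts to $T \ge B_\epsilon/(p_1-\alpha_\epsilon)$, which is precisely \eqref{eq:convergence1_bounded T} evaluated at the endpoint $\hat{p}_1=p_1$ (the meaning of ``any $T$ following \eqref{eq:convergence1_bounded T}''). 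With $\hat{p}_1=\hat{p}_1^\star$ the constraint \eqref{eq:convergence1_bounded T} holds with equality, so Theorem~\ref{thm:convergence1_bounded} applies. It then remains to substitute $\hat{p}_1^\star$ into the exponent: using $1-\hat{p}_1^\star/p_1 = (p_1-\alpha_\epsilon - B_\epsilon/T)/p_1$ and clearing the factor $1/T^2$ against the leading $T$ yields
\[ \frac{(1-\hat{p}_1^\star/p_1)^2}{2}\,T = \frac{1}{2p_1^2 T}\big((p_1-\alpha_\epsilon)T - B_\epsilon\big)^2, \]
which is exactly the exponent in the claim once $\alpha_\epsilon$ and $B_\epsilon$ are written out in full.

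I do not anticipate a serious obstacle, as the corollary is just the optimization of an already-established bound over a single scalar parameter; the two points that require care are (i) getting the direction of monotonicity right, so that one \emph{minimizes} rather than maximizes $\hat{p}_1$, and (ii) confirming that the boundary choice $\hat{p}_1^\star$ remains admissible for the $T$ under consideration, which is what ties the lower bound on $T$ to the endpoint case $\hat{p}_1=p_1$ and uses the standing hypothesis on $\epsilon$ to keep $(\alpha_\epsilon,p_1]$ nonempty. The concluding algebraic simplification of the exponent is routine.
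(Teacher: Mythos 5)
Your proposal is correct and follows essentially the same route as the paper: the paper likewise identifies the smallest admissible $\hat{p}_1=\alpha_\epsilon+B_\epsilon/T$ as the maximizer of the bound and substitutes it into the exponent, and your additional verification that this choice lies in $(\alpha_\epsilon,p_1]$ (via the hypothesis on $\epsilon$ and the endpoint case $\hat{p}_1=p_1$ of \eqref{eq:convergence1_bounded T}) only makes explicit what the paper leaves implicit. One remark: your algebra produces $\tfrac{1}{2}\log_\gamma(\cdot)$ inside the bracket, consistent with the paper's displayed optimizer for $\hat{p}_1$, whereas the corollary as printed has $\log_\gamma(\cdot)$ without the factor $\tfrac{1}{2}$ --- an apparent typo in the statement rather than a gap in your argument.
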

 
\subsection{Convergence Analysis: The Subexponential Noise Case}
\label{sec:convergence1_subexp}
We now extend the analysis for the use of Oracle~\ref{oracle.zero}.\ref{ass:subexponential noise}. 
Since Lemmas~\ref{lem:phi-m}, \ref{lem:delta accept}--\ref{lem:progress}, \ref{lem:true}, and \ref{lem:total_progress}--\ref{lem:Azuma} still hold when Oracle~\ref{oracle.zero}.\ref{ass:subexponential noise} is used instead of Oracle~\ref{oracle.zero}.\ref{ass:bounded noise}, we only need two additional lemmas before we can prove convergence. 
Lemma~\ref{lem:Azuma2} provides a guarantee on the number of iterations with favorable function evaluations ($J_k=1$). 

\begin{lemma} \label{lem:Azuma2}
Let 
\begin{equation} \label{eq:p0 1}
    p_0 = 1 - 2 \exp\left( a(\epsilon_f - r/2)\right), 
\end{equation} 
where $a$ is the positive constant from Oracle~\ref{oracle.zero}.\ref{ass:subexponential noise}.
Then, we have the submartingale condition 
\begin{equation} \label{eq:submartingale0}
	\mathbbm{P}\{J_k = 1 | \mathcal{F}_{k-1}'\} \geq p_0, \;  \text{ for all } k\in\{0,1,\dots\}, 
\end{equation}
where $\Fcal_{k-1}'$ is the $\sigma$-algebra defined in \eqref{eq:sig_alg2}. 
Furthermore, if $r > 2\epsilon_f + \frac{2}{a}\log 4$ (equivalent to $p_0 > 1/2$), then for any positive integer $T$ and any $\hat{p}_0 \in [0, p_0]$, it follows that
\begin{equation} \label{eq:Azuma2}
    \mathbbm{P} \left\{ \sum_{k=0}^{T-1} J_k > \hat{p}_0T \right\} 
    \ge 1 - \exp \left( - \frac{(1 - \hat{p}_0/p_0)^2}{2} T \right). 
\end{equation} 
\end{lemma}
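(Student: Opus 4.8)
The plan is to treat the two claims separately: first establish the one-step submartingale bound \eqref{eq:submartingale0}, and then feed it into an Azuma--Hoeffding argument whose structure is identical to that of Lemma~\ref{lem:Azuma}.

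For the submartingale condition, I would begin by noting that the failure event $\{J_k = 0\}$ is exactly $\{\mathcal{E}_k + \mathcal{E}_k^+ > r\}$, and that a sum exceeding $r$ forces at least one summand to exceed $r/2$, so
\[
    \{J_k = 0\} \subseteq \{\mathcal{E}_k > r/2\} \cup \{\mathcal{E}_k^+ > r/2\}.
\]
The crucial point is the choice of conditioning $\sigma$-algebra: given $\mathcal{F}_{k-1}'$ from \eqref{eq:sig_alg2}, both $X_k$ and $X_k^+$ are determined, since $X_k^+$ is obtained deterministically from $X_k$ and $M_k$, and $M_k$ is built from the draws $\xi_k^{(1)}, \xi_k^{(2)}$ that $\mathcal{F}_{k-1}'$ already contains. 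With these two evaluation points fixed, I would apply the subexponential tail bound of Oracle~\ref{oracle.zero}.\ref{ass:subexponential noise} with $t = r/2$ at each of $X_k$ and $X_k^+$, combined with the union bound, to obtain
\[
    \mathbbm{P}\{J_k = 0 \mid \mathcal{F}_{k-1}'\} \le 2\exp\!\big(a(\epsilon_f - r/2)\big),
\]
which rearranges into \eqref{eq:submartingale0} with $p_0$ as defined in \eqref{eq:p0 1}.

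For the tail bound, I would first record the algebraic equivalence $p_0 > 1/2 \iff r > 2\epsilon_f + \tfrac{2}{a}\log 4$, which follows by isolating the exponential in \eqref{eq:p0 1}. Under $p_0 > 1/2$, the process $\big\{\sum_{k=0}^{t-1} J_k - p_0 t\big\}_{t \ge 0}$ is a submartingale with respect to the filtration $\{\mathcal{F}_{t-1}'\}$: the partial sum $\sum_{k=0}^{t-1} J_k$ is $\mathcal{F}_{t-1}'$-measurable, each $J_t$ is $\mathcal{F}_t'$-measurable, and the one-step increment is controlled by \eqref{eq:submartingale0}, giving the bounded-difference constant $|J_t - p_0| \le \max\{p_0, 1-p_0\} = p_0$. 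Invoking Azuma--Hoeffding exactly as in the proof of Lemma~\ref{lem:Azuma} and setting the deviation parameter to $(p_0 - \hat{p}_0)T$ then yields \eqref{eq:Azuma2}.

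The only genuine obstacle is the measurability bookkeeping in the first part: the bound cannot be established by conditioning on $\mathcal{F}_{k-1}$ alone, because $X_k^+$ is not yet determined there. Conditioning instead on the finer $\mathcal{F}_{k-1}'$ fixes both evaluation points, so that the pointwise oracle tail applies at each, and the independence built into Oracle~\ref{oracle.zero}.\ref{ass:subexponential noise} ensures that conditioning on the past does not inflate either tail probability beyond $\exp(a(\epsilon_f - r/2))$. Everything after the submartingale bound is a direct transcription of the Azuma computation already carried out in Lemma~\ref{lem:Azuma}.
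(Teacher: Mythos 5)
Your proposal is correct and follows essentially the same route as the paper: the union bound over $\{\mathcal{E}_k > r/2\}$ and $\{\mathcal{E}_k^+ > r/2\}$ conditioned on $\mathcal{F}_{k-1}'$ (which fixes both evaluation points), followed by the oracle tail at $t = r/2$ to get the submartingale condition, and then the same Azuma--Hoeffding computation as in Lemma~\ref{lem:Azuma} with bounded difference $p_0$ and deviation $(p_0-\hat{p}_0)T$. Your explicit remark about why the finer $\sigma$-algebra $\mathcal{F}_{k-1}'$ is the right one to condition on is a point the paper leaves implicit, but the argument is the same.
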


\begin{proof}
By the definition of Oracle~\ref{oracle.zero}.\ref{ass:subexponential noise}, we have 
\[\begin{aligned} 
    &\mathbbm{P}_{\xi^{(0)}_k} \left\{ |e(X_k,\xi^{(0)}_k)| > t \middle| \Fcal_{k-1}' \right\} \le \exp (a(\epsilon_f - t)) \\
    \text{and } &\mathbbm{P}_{\xi^{(0+)}_k} \left\{ |e(X_k^+,\xi^{(0+)}_k)| > t \middle| \Fcal_{k-1}' \right\} \le \exp (a(\epsilon_f - t))
\end{aligned}\]
for all $k\in\{0,1,\dots\}$. 
Consequently, 
\[ \begin{aligned}
\mathbbm{P} \left\{ J_k = 0 \middle| \Fcal_{k-1}' \right\}
&= \mathbbm{P} \left\{ r < \mathcal{E}_k + \mathcal{E}_k^+ \middle| \Fcal_{k-1}' \right\} \\
&\le \mathbbm{P} \left\{ \mathcal{E}_k > r/2 \text{ or } \mathcal{E}_k^+ > r/2 \middle| \Fcal_{k-1}' \right\} \\
&\le \mathbbm{P} \left\{ \mathcal{E}_k > r/2 \middle| \Fcal_{k-1}' \right\} + \mathbbm{P} \left\{ \mathcal{E}_k^+ > r/2 \middle| \Fcal_{k-1}' \right\} \\
&\le \exp\left( a(\epsilon_f - r/2)\right) + \exp\left( a(\epsilon_f - r/2)\right) = 1 - p_0. 
\end{aligned} \]
Thus, \eqref{eq:submartingale0} holds, and the random process $\left\{ \sum_{k=0}^{t-1} J_k - p_0 t \right\}_{t=0,1,\dots}$ 
is a submartingale. 
Since 
\[ \left| \left( \sum_{k=0}^{(t+1)-1} J_k - p_0 (t+1) \right) - \left( \sum_{k=0}^{t-1} J_k - p_0 t \right) \right| = |J_t - p_0| \le \max\{|0-p_0|, |1-p_0|\} = p_0
\] 
for any $t \in \mathbb{N}$, by the Azuma-Hoeffding inequality, we have for any positive integer $T$ and any positive real $c$
\[ \mathbbm{P} \left\{ \sum_{k=0}^{T-1} J_k - Tp_0 \le -c \right\} 
    \le \exp \left( - \frac{c^2}{2Tp_0^2} \right). 
\]
Setting $c = (p_0 - \hat{p}_0) T$ and subtracting 1 from both sides yields \eqref{eq:Azuma2}.
\end{proof}

We largely rely on \cite[Chapter 2]{HDPbook} when it comes to the analysis of subexponential random variables, but the results in that book are not sufficient for our convergence analysis as the subexponential distribution defined there is controlled by one parameter, whereas the one in Oracle~\ref{oracle.zero}.\ref{ass:subexponential noise} is controlled by two parameters, $a$ and $\epsilon_f$. 
A somewhat stronger result is stated in the following proposition and its proof offered in Appendix~\ref{app:proofs}. 
\begin{proposition} \label{prop:subexponential}
Let $X$ be a random variable such that for some $a > 0$ and $b \ge 0$,
\begin{equation} \label{eq:subexponential}
    \mathbbm{P}\{|X| \ge t\} \le \exp (a(b-t)), \quad \text{for all } t>0. 
\end{equation}
Then, it follows that
\begin{equation} \label{eq:E exp lambda X}
    \mathbb{E} \exp(\lambda |X|) \le \frac{1}{1 - \lambda/a} \exp(\lambda b),  \quad \text{for all } \lambda \in \left[0, a\right).
\end{equation}
\end{proposition}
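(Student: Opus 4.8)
The plan is to compute the expectation $\E \exp(\lambda|X|)$ directly via the standard layer-cake (tail-integral) representation and then insert the subexponential tail bound \eqref{eq:subexponential}. The key identity is that for the nonnegative random variable $Y = \exp(\lambda|X|)$ with $\lambda \ge 0$, one has $\E Y = \int_0^\infty \mathbbm{P}\{Y \ge s\}\,ds$. First I would rewrite the integrand: since $\lambda \ge 0$, the event $\{\exp(\lambda|X|) \ge s\}$ is equivalent to $\{|X| \ge \frac{1}{\lambda}\log s\}$ for $s > 1$, while for $s \le 1$ the probability is simply $1$. This splits the integral into $\int_0^1 1\,ds = 1$ plus $\int_1^\infty \mathbbm{P}\{|X| \ge \frac{1}{\lambda}\log s\}\,ds$. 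A cleaner route I would actually carry out is the change of variables $s = \exp(\lambda t)$ (so $ds = \lambda \exp(\lambda t)\,dt$), which converts everything into an integral in $t$ over $[0,\infty)$ and lets me apply \eqref{eq:subexponential} pointwise.

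After the substitution, the bound becomes
\begin{equation*}
    \E \exp(\lambda|X|) \le 1 + \lambda \int_0^\infty \exp(\lambda t)\, \mathbbm{P}\{|X| \ge t\}\,dt
    \le 1 + \lambda \int_0^\infty \exp(\lambda t)\exp(a(b-t))\,dt.
\end{equation*}
The remaining integral is $\exp(ab)\int_0^\infty \exp(-(a-\lambda)t)\,dt$, which converges precisely because $\lambda < a$ and evaluates to $\exp(ab)/(a-\lambda)$. Substituting back gives $\E\exp(\lambda|X|) \le 1 + \frac{\lambda}{a-\lambda}\exp(ab)$. I would then reconcile this with the claimed bound $\frac{1}{1-\lambda/a}\exp(\lambda b)$: note $\frac{1}{1-\lambda/a} = \frac{a}{a-\lambda} = 1 + \frac{\lambda}{a-\lambda}$, so the target right-hand side equals $\left(1 + \frac{\lambda}{a-\lambda}\right)\exp(\lambda b)$. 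Hence it suffices to check that $1 + \frac{\lambda}{a-\lambda}\exp(ab) \le \left(1 + \frac{\lambda}{a-\lambda}\right)\exp(\lambda b)$.

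The main obstacle, and the one step deserving real care, is this final comparison, since it must hold for all $b \ge 0$ and all $\lambda \in [0,a)$ simultaneously. When $b = 0$ both sides equal $1 + \frac{\lambda}{a-\lambda}$, so equality holds; I would then argue the desired inequality by comparing the two sides as functions of $b$, for instance by showing the ratio (or difference) is monotone. The cleanest argument is probably to observe that $\exp(\lambda b) \ge 1$ and $\exp(ab) \le \exp(ab)$ is too crude, so instead I expect to need the convexity/ordering fact that since $0 \le \lambda < a$, the quantity $\exp(\lambda b)$ sits between $1$ and $\exp(ab)$, and a weighted-average estimate pins down the inequality. An alternative, and perhaps safer, tactic is to note that one may assume $b$ is effectively absorbed by recentering: replacing $t$ by $t$ and using $\mathbbm{P}\{|X| \ge t\} \le \min\{1, \exp(a(b-t))\}$ tightens the integral for $t < b$, where the raw bound exceeds $1$. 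Exploiting $\mathbbm{P}\{|X|\ge t\}\le 1$ on $[0,b]$ and the exponential bound only on $[b,\infty)$ should yield exactly $\frac{1}{1-\lambda/a}\exp(\lambda b)$ without any leftover slack, which is likely how the authors obtain the clean constant; I would pursue that split as the primary line of attack.
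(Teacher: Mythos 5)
Your proposal is correct, and your final line of attack is the right one, but you reach the integral by a different route than the paper. The paper expands $\exp(\lambda|X|)$ in its Taylor series, bounds each moment $\mathbb{E}|X|^p$ via the integral identity $\mathbb{E}|X|^p=\int_0^\infty \mathbbm{P}\{|X|^p\ge u\}\,{\rm d}u$ with the tail bound truncated at $\min\{1,\exp(a(b-t))\}$, and then resums the series; you instead apply the layer-cake formula directly to $Y=\exp(\lambda|X|)$, which after the substitution $s=e^{\lambda t}$ gives $\mathbb{E}\exp(\lambda|X|)=1+\lambda\int_0^\infty e^{\lambda t}\,\mathbbm{P}\{|X|\ge t\}\,{\rm d}t$ in one step. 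Both arguments funnel into the identical computation $1+\lambda\int_0^b e^{\lambda t}\,{\rm d}t+\lambda e^{ab}\int_b^\infty e^{(\lambda-a)t}\,{\rm d}t=\tfrac{a}{a-\lambda}e^{\lambda b}$; your version is the more economical of the two, since it avoids the series--integral interchange the paper performs implicitly. One point worth making explicit: the ``reconciliation'' you contemplate in your third paragraph, namely showing $1+\tfrac{\lambda}{a-\lambda}e^{ab}\le\bigl(1+\tfrac{\lambda}{a-\lambda}\bigr)e^{\lambda b}$, is \emph{false} for large $b$ (the left side grows like $e^{ab}$, the right like $e^{\lambda b}$ with $\lambda<a$), so the crude bound without truncation cannot be salvaged. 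You correctly identify the split at $t=b$, using $\mathbbm{P}\{|X|\ge t\}\le 1$ on $[0,b]$ and the exponential tail only on $[b,\infty)$, as the necessary move; carrying it out gives exactly $\tfrac{1}{1-\lambda/a}e^{\lambda b}$ with no slack, so once you commit to that split the proof closes cleanly.
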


With Proposition~\ref{prop:subexponential}, we establish in Lemma~\ref{lem:"Bernstein"} a probabilistic upper bound on the total increase in the objective function value caused by the noise in the function evaluations. 
\begin{lemma} \label{lem:"Bernstein"}
With Oracle~\ref{oracle.zero}.\ref{ass:subexponential noise}, for any $t \ge 0$, 
\begin{equation} \label{eq:"Bernstein"}
    \mathbbm{P} \left\{\sum_{k=0}^{T-1} \Theta_k' \left( \mathcal{E}_k + \mathcal{E}_k^+ \right) \ge T(4/a + 2\epsilon_f) + t \right\}
    \le \exp\left( - \frac{a}{4} t \right). 
\end{equation}
\end{lemma}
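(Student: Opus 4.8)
The plan is to bound the moment generating function (MGF) of the sum $S_T := \sum_{k=0}^{T-1}\Theta_k'\left(\mathcal{E}_k + \mathcal{E}_k^+\right)$ and then apply a Chernoff (exponential Markov) argument, exploiting the light tails guaranteed by Oracle~\ref{oracle.zero}.\ref{ass:subexponential noise} through Proposition~\ref{prop:subexponential}.

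First I would discard the indicator. Because $\Theta_k' \in \{0,1\}$ and $\mathcal{E}_k, \mathcal{E}_k^+ \ge 0$, for every $\lambda \ge 0$ we have $\exp\!\left(\lambda \Theta_k'(\mathcal{E}_k+\mathcal{E}_k^+)\right) \le \exp\!\left(\lambda(\mathcal{E}_k+\mathcal{E}_k^+)\right)$, which removes the troublesome dependence of $\Theta_k'$ (through $\rho_k$) on the function-value noise. The next step is a per-iteration conditional MGF bound. Conditioned on $\mathcal{F}_{k-1}'$ (defined in \eqref{eq:sig_alg2}) both iterates $X_k$ and $X_k^+$ are determined, and the argument in the proof of Lemma~\ref{lem:Azuma2} supplies the conditional tail bounds $\mathbbm{P}\left\{\mathcal{E}_k > t \,\middle|\, \mathcal{F}_{k-1}'\right\} \le \exp(a(\epsilon_f - t))$ and the analogue for $\mathcal{E}_k^+$. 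Applying Cauchy--Schwarz and then Proposition~\ref{prop:subexponential} (with $b=\epsilon_f$, evaluated at the argument $2\lambda$, which is legitimate provided $2\lambda < a$) yields, for $0 \le \lambda < a/2$,
\[
\mathbb{E}\left[\exp(\lambda(\mathcal{E}_k+\mathcal{E}_k^+)) \,\middle|\, \mathcal{F}_{k-1}'\right]
\le \sqrt{\mathbb{E}[\exp(2\lambda\mathcal{E}_k)\mid\mathcal{F}_{k-1}']\;\mathbb{E}[\exp(2\lambda\mathcal{E}_k^+)\mid\mathcal{F}_{k-1}']}
\le \frac{1}{1-2\lambda/a}\exp(2\lambda\epsilon_f).
\]
Using Cauchy--Schwarz here (rather than conditional independence of the two oracle draws) is what produces a single factor $1/(1-2\lambda/a)$, and this is precisely what is needed to land on the stated constants; conditional independence would give an even stronger bound but is not required.

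I would then aggregate over $k$ by iterated conditioning. Since each $Y_k := \Theta_k'(\mathcal{E}_k+\mathcal{E}_k^+)$ is $\mathcal{F}_k$-measurable, the partial sum $\sum_{j<k} Y_j$ is $\mathcal{F}_{k-1}'$-measurable; peeling off the last term one at a time and inserting the per-iteration bound gives
\[
\mathbb{E}[\exp(\lambda S_T)] \le \left(\frac{1}{1-2\lambda/a}\right)^{T}\exp(2\lambda\epsilon_f T).
\]
Finally, the Chernoff bound $\mathbbm{P}\{S_T \ge u\} \le \exp(-\lambda u)\,\mathbb{E}[\exp(\lambda S_T)]$ with $u = T(4/a+2\epsilon_f)+t$ and the choice $\lambda = a/4$ (so that $2\lambda = a/2 < a$ and $1/(1-2\lambda/a) = 2$) cancels the $\epsilon_f$-terms and produces $2^{T}\exp(-T)\exp(-at/4)$; since $2^{T}e^{-T} = e^{(\ln 2 - 1)T} \le 1$, the claimed bound $\exp(-at/4)$ follows.

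The main obstacle I anticipate is the measurability/filtration bookkeeping needed to justify the iterated conditioning: one must keep straight that the conditional tail control is available with respect to $\mathcal{F}_{k-1}'$ (which already exposes $\xi_k^{(1)},\xi_k^{(2)}$ and hence fixes $X_k^+$) while $\Theta_k'$, $\mathcal{E}_k$ and $\mathcal{E}_k^+$ are only $\mathcal{F}_k$-measurable. The one other delicate point is the calibration of $\lambda$: I must verify that $\lambda=a/4$ stays strictly below the radius $a/2$ demanded by Proposition~\ref{prop:subexponential} at argument $2\lambda$, and check the arithmetic that exactly cancels the $2\lambda\epsilon_f T$ contribution against the $2\epsilon_f$ term hidden inside $u$, leaving the clean residual $2^{T}e^{-T}\le 1$.
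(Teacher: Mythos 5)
Your proposal is correct and follows essentially the same route as the paper: drop the indicator $\Theta_k'$, bound the conditional MGF of $\mathcal{E}_k+\mathcal{E}_k^+$ via Cauchy--Schwarz and Proposition~\ref{prop:subexponential}, aggregate by iterated conditioning on $\mathcal{F}_{k-1}'$, and apply a Chernoff bound at $\lambda=a/4$. The only (cosmetic) difference is in the last step, where the paper absorbs the factor $(1-2\lambda/a)^{-T}$ using $1/(1-x)\le e^{2x}$ before setting $\lambda=a/4$, whereas you plug in $\lambda=a/4$ directly and use $2^{T}e^{-T}\le 1$; both yield the stated bound.
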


\begin{proof}
With Oracle~\ref{oracle.zero}.\ref{ass:subexponential noise}, $\mathbbm{P} \left\{ \mathcal{E}_k \ge t \middle| \Fcal_{k-1}'\right\} \le \exp (a(\epsilon_f - t))$, and by Proposition~\ref{prop:subexponential} it follows that 
\[ \mathbb{E} \left\{\exp(2\lambda \mathcal{E}_k) \middle| \Fcal_{k-1}'\right\} \le \frac{1}{1 - 2\lambda/a} \exp(2 \lambda \epsilon_f) \quad \text{for all } \lambda \in \left[0, \frac{a}{2} \right). 
\]
A similar result applies to $\mathcal{E}_k^+$. 
Then, by the Cauchy-Schwarz inequality, it follows that for all $\lambda \in [0, a/2)$
\begin{equation} \label{eq:Cauchy-Schwarz} \begin{aligned}
\mathbb{E} \left\{\exp(\lambda \mathcal{E}_k + \lambda \mathcal{E}_k^+) \middle| \Fcal_{k-1}'\right\}
&\le \sqrt{\mathbb{E} \left\{\exp(2\lambda \mathcal{E}_k)\middle| \Fcal_{k-1}'\right\} \cdot \mathbb{E} \left\{\exp(2\lambda \mathcal{E}_k^+) \middle| \Fcal_{k-1}'\right\} } \\
&\le \frac{1}{1 - 2\lambda/a} \exp(2 \lambda \epsilon_f). 
\end{aligned} \end{equation}
By Markov's inequality, for any $\lambda \in [0, a / 2)$, $t \ge 0$, and positive integer $T$, 
\begin{align*}
\mathbbm{P}\left\{ \sum_{k=0}^{T-1} \Theta_k' \left( \mathcal{E}_k + \mathcal{E}_k^+ \right)  \ge t \right\} 
&\le \mathbbm{P}\left\{ \sum_{k=0}^{T-1} \left( \mathcal{E}_k + \mathcal{E}_k^+ \right)  \ge t \right\} 
= \mathbbm{P}\left\{ \exp\left( \lambda \sum_{k=0}^{T-1} \left( \mathcal{E}_k + \mathcal{E}_k^+ \right) \right)  \ge \exp(\lambda t) \right\} \\
&\le e^{-\lambda t} \mathbb{E} \left\{ \exp\left( \lambda \sum_{k=0}^{T-1} \left( \mathcal{E}_k + \mathcal{E}_k^+ \right) \right) \right\} 
\le e^{-\lambda t} \left(\frac{1}{1 - 2\lambda/a} \exp(2 \lambda \epsilon_f)\right)^T, 
\end{align*}
where the last inequality can be proved by induction as follows. 
Firstly, this inequality holds for $T=1$ due to \eqref{eq:Cauchy-Schwarz}. 
Now if it holds for any positive integer $T$, then for $T+1$ 
\begin{align*}
& \;\;\;\; e^{-\lambda t} \mathbb{E} \left\{ \exp\left( \lambda \sum_{k=0}^{(T+1)-1} \left( \mathcal{E}_k + \mathcal{E}_k^+ \right) \right) \right\} \\
&= e^{-\lambda t} \mathbb{E} \left\{ \exp\left( \lambda \sum_{k=0}^{T-1} \left( \mathcal{E}_k + \mathcal{E}_k^+ \right) \right) \E_{\xi^{(0)}_T, \xi^{(0+)}_T} \left[ \exp\left( \lambda\mathcal{E}_{T} + \lambda\mathcal{E}_{T}^+ \right) \middle| \Fcal_{T-1}'\right] \right\} \\
&\leftstackrel{\eqref{eq:Cauchy-Schwarz}}{\le} e^{-\lambda t} \mathbb{E} \left\{ \exp\left( \lambda \sum_{k=0}^{T-1} \left( \mathcal{E}_k + \mathcal{E}_k^+ \right) \right) \frac{1}{1 - 2\lambda/a} \exp(2 \lambda \epsilon_f) \right\} \\
&\le e^{-\lambda t} \left(\frac{1}{1 - 2\lambda/a} \exp(2 \lambda \epsilon_f)\right)^T \frac{1}{1 - 2\lambda/a} \exp(2 \lambda \epsilon_f), 
\end{align*}
which shows this inequality holds for $T+1$ and thus for any positive integer $T$. 
For ease of exposition, we use the fact that $1/(1-x) \le \exp(2x)$ for all $x \in [0, 1/2]$ to simplify the above result 
\[ \mathbbm{P}\left\{ \sum_{k=0}^{T-1} \Theta_k' \left( \mathcal{E}_k + \mathcal{E}_k^+ \right)  \ge t \right\}
\le e^{-\lambda t} \left[ \exp(4\lambda/a)\exp(2 \lambda \epsilon_f) \right]^T
= \exp\left(\lambda \left[T (4/a + 2\epsilon_f) - t\right] \right)
\quad \text{for all } \lambda \in \left[0, \frac{a}{4} \right]. 
\]
Clearly the right-hand side is only less than or equal to 1 when $t \ge T(4/a + 2\epsilon_f)$, which makes the right-hand side a monotonically non-increasing function of $\lambda$. 
We choose $\lambda = a/4$ and apply a change of variable to obtain the final result. 
\end{proof}

Similar to Lemma~\ref{lem:combine1bounded}, we combine the inequalities from the established lemmas. 
\begin{lemma} \label{lem:combine1subexpo}
Under Oracle~\ref{oracle.zero}.\ref{ass:subexponential noise}, it holds for Algorithm~\ref{alg:tr} that 
\begin{equation} \label{eq:combine1subexpo} \begin{aligned} 
    \mathbbm{P} &\left\{ \left(\hat{p}_0 + \hat{p}_1 - \frac{3}{2} - \frac{2\epsilon_f + 4/a + r}{h(\gamma \bar\Delta)} \right) T < \frac{\phi(x_0) - \hat\phi + t}{h(\gamma \bar\Delta)} + \frac{1}{2}\left|\log_\gamma \frac{\bar\Delta}{\delta_0}\right| + \frac{1}{2} \right\} \\
    &\ge 1 - \exp \left( - \frac{(1 - \hat{p}_1/p_1)^2}{2} T \right) - \exp \left( - \frac{(1 - \hat{p}_0/p_0)^2}{2} T \right) - \exp\left( - \frac{a}{4} t \right)
\end{aligned} \end{equation}
for any positive integer $T$, any $\hat{p}_1 \in [0,p_1]$, and any $t \ge 0$. 
\end{lemma}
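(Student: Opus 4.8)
The plan is to establish the event inside \eqref{eq:combine1subexpo} deterministically on the intersection of three high-probability events and then close with a union bound. The three events I would use are the model-accuracy count $\{\sum_{k=0}^{T-1} I_k > \hat{p}_1 T\}$ from Lemma~\ref{lem:Azuma}, the function-evaluation count $\{\sum_{k=0}^{T-1} J_k > \hat{p}_0 T\}$ from Lemma~\ref{lem:Azuma2}, and the noise-aggregate event $\{\sum_{k=0}^{T-1}\Theta_k'(\mathcal{E}_k+\mathcal{E}_k^+) < T(4/a+2\epsilon_f)+t\}$ from Lemma~\ref{lem:"Bernstein"}, whose complements have probabilities bounded by the three exponential terms appearing on the right-hand side of \eqref{eq:combine1subexpo}. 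Since everything downstream will be purely deterministic, subadditivity of probability will give the claim.

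For the deterministic backbone I would partition the iterations into the four counts $a_1=\sum\Theta_k(1-\Lambda_k')$, $a_2=\sum(1-\Theta_k)(1-\Lambda_k)$, $a_3=\sum(1-\Theta_k)\Lambda_k$, and $a_4=\sum\Theta_k\Lambda_k'$, so that $a_1+a_2+a_3+a_4=T$. Subtracting the inequality \eqref{eq:total_succ_unsucc} from this identity eliminates $a_1$ and $a_3$ and yields $a_2+a_4>\tfrac12(T-L-1)$, where $L:=|\log_\gamma(\bar\Delta/\delta_0)|$. The crucial new step relative to the bounded-noise Lemma~\ref{lem:combine1bounded} is the treatment of $a_2$: Corollary~\ref{cor.1st} gives $(1-\Theta_k)(1-\Lambda_k)\le 1-I_kJ_k\le(1-I_k)+(1-J_k)$, so on the first two events $a_2\le\sum(1-I_k)+\sum(1-J_k)<(2-\hat{p}_0-\hat{p}_1)T$. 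Combining this with $a_2+a_4>\tfrac12(T-L-1)$ produces the lower bound $a_4>(\hat{p}_0+\hat{p}_1-\tfrac32)T-\tfrac{L}{2}-\tfrac12$.

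For the matching upper bound I would turn to the progress inequality \eqref{eq:total_progress}, split $\sum\Theta_k'(\mathcal{E}_k+\mathcal{E}_k^++r)=\sum\Theta_k'(\mathcal{E}_k+\mathcal{E}_k^+)+r\sum\Theta_k'$, bound the first sum using the Bernstein event and the second by $r\sum\Theta_k'\le rT$, obtaining $h(\gamma\bar\Delta)\,a_4<\phi(x_0)-\hat\phi+t+(4/a+2\epsilon_f+r)T$. Chaining the two bounds on $a_4$, using $h(\gamma\bar\Delta)=C_3\gamma^2\bar\Delta^2>0$ to divide through, and rearranging then yields exactly the inequality in \eqref{eq:combine1subexpo}; the claimed probability follows from the union bound over the three complementary events.

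The main obstacle, and the genuine departure from the bounded-noise argument, is that $J_k$ is now random, so the clean identity $\sum I_k(1-\Theta_k)(1-\Lambda_k)=0$ used in Lemma~\ref{lem:combine1bounded} is unavailable and must be replaced by the two-term slack $1-I_kJ_k\le(1-I_k)+(1-J_k)$; this is precisely what converts the single deficit $\hat{p}_1-\tfrac12$ into $\hat{p}_0+\hat{p}_1-\tfrac32$ and introduces the second Azuma failure term. A secondary point needing care is that the per-iteration noise is no longer bounded by $2\epsilon_f$: the Bernstein bound \eqref{eq:"Bernstein"} supplies the $4/a$ term together with the slack $t$, and I would verify that folding $r\sum\Theta_k'\le rT$ into the coefficient of $T$ leaves the final noise coefficient exactly $2\epsilon_f+4/a+r$.
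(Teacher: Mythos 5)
Your proposal is correct and follows essentially the same route as the paper's proof: it combines Lemmas~\ref{lem:total_progress}, \ref{lem:total_succ_unsucc}, \ref{lem:Azuma}, \ref{lem:Azuma2} and \ref{lem:"Bernstein"} with Corollary~\ref{cor.1st} and closes with a union bound over the same three failure events. Your reorganization --- isolating the deterministic counting argument (bounding $a_2$ via $(1-\Theta_k)(1-\Lambda_k)\le(1-I_k)+(1-J_k)$ and then extracting the lower bound on $a_4$) on the intersection of the good events --- is just a cleaner presentation of the paper's nested chain of probability inequalities, not a different argument.
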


\begin{proof}
Multiply 
\[ \sum_{k=0}^{T-1} \Theta_k(1 - \Lambda_k') + (1-\Theta_k)(1-\Lambda_k) + (1-\Theta_k)\Lambda_k + \Theta_k \Lambda_k' = \sum_{k=0}^{T-1} 1 = T
\]
with $(2\epsilon_f + 4/a + r) / h(\gamma \bar\Delta) + 0.5$ and \eqref{eq:total_succ_unsucc}  with 0.5 and add the two expressions to obtain 
\begin{equation} \label{eq:temp2}
    \frac{2\epsilon_f + 4/a + r}{h(\gamma \bar\Delta)} T + \left[ \sum_{k=0}^{T-1} \Theta_k(1 - \Lambda_k') + (1-\Theta_k)\Lambda_k \right] 
    < \left( \frac{2\epsilon_f + 4/a + r}{h(\gamma \bar\Delta)} + \frac{1}{2} \right) T + \frac{1}{2} \left|\log_\gamma \frac{\bar\Delta}{\delta_0}\right| + \frac{1}{2}. 
\end{equation}
Then, 
\begin{align*}
&\mathbbm{P}\left\{ \frac{2\epsilon_f + 4/a + r}{h(\gamma \bar\Delta)}T - \sum_{k=0}^{T-1} \Theta_k \Lambda_k' < \left(\frac{2\epsilon_f + 4/a + r}{h(\gamma \bar\Delta)} + \frac{3}{2} - \hat{p}_0 - \hat{p}_1 \right)T + \frac{1}{2} \left|\log_\gamma \frac{\bar\Delta}{\delta_0}\right| + \frac{1}{2} \right\} \\
&\ge \mathbbm{P}\left\{ \sum_{k=0}^{T-1} - \Theta_k - (1 - \Theta_k)\Lambda_k < (1 - \hat{p}_0 - \hat{p}_1) T \text{ and \eqref{eq:temp2} holds. } \right\} \\
&= \mathbbm{P}\left\{ \sum_{k=0}^{T-1} - \Theta_k - (1 - \Theta_k)\Lambda_k < (1 - \hat{p}_0 - \hat{p}_1) T \right\} \\
&\ge \mathbbm{P}\left\{ \sum_{k=0}^{T-1} \left[ - \Theta_k - (1 - \Theta_k) \Lambda_k \right] I_k J_k + (1-I_k)(1-J_k) < (1 - \hat{p}_0 - \hat{p}_1) T \right\} \\
&= \mathbbm{P}\left\{ \sum_{k=0}^{T-1} - I_k J_k + (1-I_k)(1-J_k) < (1 - \hat{p}_0 - \hat{p}_1) T \right\} \\
&\ge \mathbbm{P}\left\{ \sum_{k=0}^{T-1} I_k > \hat{p}_1 T \text{ and } \sum_{k=0}^{T-1} J_k > \hat{p}_0 T \right\} \\
&= \mathbbm{P}\left\{ \sum_{k=0}^{T-1} I_k > \hat{p}_1 T \right\} + \mathbbm{P}\left\{ \sum_{k=0}^{T-1} J_k > \hat{p}_0 T \right\} - \mathbbm{P}\left\{ \sum_{k=0}^{T-1} I_k > \hat{p}_1 T \text{ or } \sum_{k=0}^{T-1} J_k > \hat{p}_0 T \right\} \\
&\ge \left[ 1 - \exp \left( - \frac{(1 - \hat{p}_1/p_1)^2}{2} T \right) \right] + \left[ 1 - \exp \left( - \frac{(1 - \hat{p}_0/p_0)^2}{2} T \right) \right] - 1 \\
&= 1 - \exp \left( - \frac{(1 - \hat{p}_1/p_1)^2}{2} T \right) - \exp \left( - \frac{(1 - \hat{p}_0/p_0)^2}{2} T \right), 
\end{align*}
where the second equality holds due to Corollary~\ref{cor.1st}, and the last inequality holds due to Lemmas~\ref{lem:Azuma} and \ref{lem:Azuma2}. 
Unlike the bounded noise case, Lemmas~\ref{lem:total_progress} and \ref{lem:"Bernstein"} imply
\begin{align*} 
h(\gamma \bar\Delta) \sum_{k=0}^{T-1} \Theta_k \Lambda_k' 
&\le \phi(x_0) - \hat\phi + \sum_{k=0}^{T-1} \Theta_k' \left( \mathcal{E}_k + \mathcal{E}_k^+ + r \right)\\
&\le \phi(x_0) - \hat\phi + (2\epsilon_f + 4/a + r)T + t \\
\text{or equivalently } \quad\quad
- \frac{\phi(x_0) - \hat{\phi} + t}{h(\gamma \bar\Delta)} 
&\le \frac{2\epsilon_f + 4/a + r}{h(\gamma \bar\Delta)}T - \sum_{k=0}^{T-1} \Theta_k \Lambda_k',
\end{align*}
with probability at least $1 - \exp(-at/4)$ for any $t \ge 0$. 
Thus, we can combine the two previous results to obtain \eqref{eq:combine1subexpo}. 
\end{proof}

We present our high probability complexity bound for Algorithm~\ref{alg:tr} with Oracle~\ref{oracle.zero}.\ref{ass:subexponential noise} in the following theorem. 
The proof is analogous to that of Theorem~\ref{thm:convergence1_bounded} and hence omitted. 
\begin{theorem} \label{thm:convergence1_subexponential}
Let Assumptions~\ref{assum:lip_cont} and \ref{assum:low_bound}  hold for the objective function $\phi$. 
Let Assumption~\ref{assum:bhm} and $r\ge 2\epsilon_f$ hold for Algorithm~\ref{alg:tr}.
Let $p_0$ be defined as in \eqref{eq:p0 1}. 
Given any $\epsilon > \sqrt{\frac{4\epsilon_f + 8/a + 2r}{C_3 \gamma^2 C_1^2 (2p_0+2p_1-3)}} + \frac{C_2}{C_1}\epsilon_g$, where $C_1$, $C_2$ and  $C_3$ are defined in \eqref{eq:c1c2} and \eqref{eq.h}, the sequence of iterates generated by Algorithm~\ref{alg:tr} with Oracle \ref{oracle.zero}.\ref{ass:subexponential noise} satisfy  
\begin{equation} \label{eq:convergence1 subexponential P}
    \mathbbm{P} \left\{ \min_{0\le k \le T-1} \|\nabla\phi(X_k)\| \le \epsilon \right\}  \ge 1 - \exp \left( - \frac{(1 - \hat{p}_1/p_1)^2}{2} T \right) - \exp \left( - \frac{(1 - \hat{p}_0/p_0)^2}{2} T \right) - \exp\left( - \frac{a}{4} t \right)
\end{equation}
for any $\hat{p}_0$ and $\hat{p}_1$ such that $\hat{p}_0 + \hat{p}_1 \in \left(\frac{3}{2} + \frac{2\epsilon_f + 4/a + r}{C_3\gamma^2 (C_1\epsilon-C_2\epsilon_g)^2}, p_0 + p_1 \right]$, any $t \ge 0$, and any 
\begin{equation} \label{eq:convergence1 subexponential T} \begin{aligned} 
    T \ge& \left(\hat{p}_0 + \hat{p}_1 - \frac{3}{2} - \frac{2\epsilon_f + 4/a + r}{C_3 \gamma^2 (C_1\epsilon-C_2\epsilon_g)^2} \right)^{-1} \\
    &\left[\frac{\phi(x_0) - \hat\phi + t}{C_3 \gamma^2 (C_1\epsilon-C_2\epsilon_g)^2} + \frac{1}{2}\log_\gamma \left( 
        \min\left\{ \frac{C_1 \epsilon-C_2\epsilon_g}{\delta_0}, \frac{\delta_0}{C_1\|\nabla\phi(x_0)\|-C_2\epsilon_g} \right\} 
    \right) + \frac{1}{2}\right]. 
\end{aligned} \end{equation}
\end{theorem}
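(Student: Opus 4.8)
The plan is to mirror the proof of Theorem~\ref{thm:convergence1_bounded} line for line, the only structural change being that the single Azuma estimate on $\{I_k\}$ is replaced by the combined bound of Lemma~\ref{lem:combine1subexpo}, which already absorbs the $\{J_k\}$ submartingale (through $\hat{p}_0$) and the subexponential accumulation of the function-evaluation errors (through the free parameter $t$ and the $4/a$ offset). I would first recall that $\bar\Delta = C_1\min_{0\le k\le T-1}\|\nabla\phi(X_k)\| - C_2\epsilon_g$ from \eqref{eq:deltabar} and dispose of the trivial case $\bar\Delta\le 0$: there $\min_{0\le k\le T-1}\|\nabla\phi(X_k)\|\le C_2\epsilon_g/C_1\le\epsilon$, so \eqref{eq:convergence1 subexponential P} holds vacuously. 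For $\bar\Delta>0$ I would substitute $h(\gamma\bar\Delta)=C_3\gamma^2(C_1 y-C_2\epsilon_g)^2$ (from \eqref{eq.h}) with $y=\min_{0\le k\le T-1}\|\nabla\phi(X_k)\|$ into \eqref{eq:combine1subexpo}.

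The next step is to define the univariate complexity function
\[
Q(y)=\left(\hat{p}_0+\hat{p}_1-\frac{3}{2}-\frac{2\epsilon_f+4/a+r}{C_3\gamma^2(C_1 y-C_2\epsilon_g)^2}\right)^{-1}\left[\frac{\phi(x_0)-\hat\phi+t}{C_3\gamma^2(C_1 y-C_2\epsilon_g)^2}+\frac{1}{2}\left|\log_\gamma\frac{C_1 y-C_2\epsilon_g}{\delta_0}\right|+\frac{1}{2}\right],
\]
which is exactly the subexponential analogue of the $Q$ used for Theorem~\ref{thm:convergence1_bounded}, carrying the extra $\hat{p}_0$, $4/a$ and $t$ terms inherited from Lemma~\ref{lem:combine1subexpo}. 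Rewriting \eqref{eq:combine1subexpo} in terms of $Q$ gives $\mathbbm{P}\{T<Q(\min_{0\le k\le T-1}\|\nabla\phi(X_k)\|)\}$ bounded below by the three-term right-hand side of \eqref{eq:convergence1 subexponential P}. It then remains to show that the event $\{T<Q(\min_k\|\nabla\phi(X_k)\|)\}$ is contained in $\{\min_k\|\nabla\phi(X_k)\|\le\epsilon\}$, which I would argue by contradiction: assuming $\min_k\|\nabla\phi(X_k)\|>\epsilon$ and using the monotonicity of $Q$ together with the hypothesis $T\ge Q(\epsilon)$ supplied by \eqref{eq:convergence1 subexponential T}.

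The monotonicity step splits into the same two subcases as in the bounded-noise proof, dictated solely by the absolute value in the $\log_\gamma$ term. When $\bar\Delta\le\delta_0$, both the leading inverse factor and the bracketed term are decreasing in $y$ over the relevant range, so $Q(\epsilon)>Q(\min_k\|\nabla\phi(X_k)\|)$ contradicts the chain $Q(\epsilon)\le T<Q(\min_k\|\nabla\phi(X_k)\|)$; when $\bar\Delta>\delta_0$, I would bound $\log_\gamma(\delta_0/\bar\Delta)$ above by the constant $\log_\gamma(\delta_0/(C_1\|\nabla\phi(x_0)\|-C_2\epsilon_g))$ using $\bar\Delta\le C_1\|\nabla\phi(x_0)\|-C_2\epsilon_g$, then invoke monotonicity of the resulting $Q'$ on $[\epsilon,+\infty)$, exactly as before.

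The main obstacle, and the one point where the subexponential data genuinely enters, is guaranteeing that the leading factor $(\hat{p}_0+\hat{p}_1-\frac{3}{2}-\cdots)^{-1}$ stays finite and positive for all $y\ge\epsilon$, so that $Q$ is well defined and the monotonicity argument is valid. This is precisely what the stated lower bound on $\epsilon$ secures: squaring $\epsilon>\sqrt{(4\epsilon_f+8/a+2r)/(C_3\gamma^2 C_1^2(2p_0+2p_1-3))}+\frac{C_2}{C_1}\epsilon_g$ yields $C_3\gamma^2(C_1\epsilon-C_2\epsilon_g)^2(p_0+p_1-\frac{3}{2})>2\epsilon_f+4/a+r$, which leaves room to choose $\hat{p}_0+\hat{p}_1$ in the interval $(\frac{3}{2}+\frac{2\epsilon_f+4/a+r}{C_3\gamma^2(C_1\epsilon-C_2\epsilon_g)^2},\,p_0+p_1]$, and monotonicity in $y$ then propagates positivity from $y=\epsilon$ to all larger $y$. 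Beyond this bookkeeping with the three failure probabilities and the $4/a$ and $t$ offsets, the argument is identical to Theorem~\ref{thm:convergence1_bounded}, which is exactly why it can be omitted.
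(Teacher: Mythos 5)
Your proposal is correct and follows exactly the route the paper intends: the paper omits this proof as "analogous to that of Theorem~\ref{thm:convergence1_bounded}," and you reproduce that argument faithfully, substituting Lemma~\ref{lem:combine1subexpo} for Lemma~\ref{lem:combine1bounded} and carrying the extra $\hat{p}_0$, $4/a$, and $t$ terms through the definition of $Q$, the two monotonicity subcases, and the positivity check supplied by the lower bound on $\epsilon$. No gaps.
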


Similar to Theorem~\ref{thm:convergence1_bounded}, one can optimize the bounds in Theorem~\ref{thm:convergence1_subexponential} by maximizing the right-hand side of \eqref{eq:convergence1 subexponential P} while satisfying \eqref{eq:convergence1 subexponential T}. However, since there is no closed form solution, we omit this corollary.

\section{Second-order stochastic convergence analysis}
\label{sec:analysis2}

The convergence analysis of Algorithm~\ref{alg:tr2} is analogous to that of Algorithm~\ref{alg:tr} with the difference that the goal is for the algorithm to find an approximate second-order critical point, i.e., a point $x$ such that $\max\{\|\nabla\phi(x)\|, -\lambda_{\min}(\nabla^2\phi(x))\} \le \epsilon$ for some sufficiently large $\epsilon$. 
However, to keep our results concise, we define an optimality measure
\begin{equation*}  
    \beta(x) \stackrel{\rm def}{=}  \max\left\{ 
        C_4\|\nabla\phi(x)\| - C_5\epsilon_g, 
        -C_6 \lambda_{\rm min}(\nabla^2\phi(x)) - C_7 \epsilon_H
    \right\}, 
\end{equation*}
where $C_4,C_5,C_6$, and $C_7$ are positive constants defined in \eqref{eq.constants}. 
The main goal of this section is to derive a probabilistic result of the form
\[ \mathbbm{P} \left\{ \min_{0 \le k \le T -1} \beta(X_k) < \epsilon' \right\} \ge \text{a function of $T$ that converges to 1 as $T$ increases}
\]
for any sufficiently large $\epsilon'$. 
It should be clear that $\epsilon$ is simply a scaled version of $\epsilon'$ plus the errors coming from $\epsilon_g$ and $\epsilon_H$. 

In what follows, we are mainly concerned with iterations where $\delta_k\leq \beta(x_k)$. 
To simplify some of the analysis we find that it is useful to have $\delta_k\leq 1$ whenever $\delta_k\leq \beta(x_k)$. To this end,  
we introduce the  following simple and technical assumption. 
It plays a similar role as \cite[Assumption~6b]{blanchet2019convergence}. 
Such an assumption can be avoided, but at the expense of a significant increase in the complexity of the analysis. 
\begin{assumption}[\textbf{Upper bound on convergence accuracy}] \label{assum:upper_bound_e} 
The optimization problem is properly scaled so that the desired accuracy satisfies $\epsilon' \le 1$ and the irreducible noise constants satisfy $\epsilon_g, \epsilon_H \ll 1$. 
\end{assumption}	
Under this assumption, we can see that we are mainly interested in iterations where $\beta(x_k)\leq 1$, thus, assuming $\delta_k\leq 1$ whenever   $\delta_k\leq \beta(x_k)$ is without loss of generality.

Analogous to Section~\ref{sec:analysis1}, we begin by stating and proving three key lemmas about the behavior of Algorithm~\ref{alg:tr2} under Assumptions~\ref{ass:lip_cont2} and \ref{assum:bhm} and assuming \eqref{eq:sufficiently accurate H g} holds. The first lemma provides a sufficient condition for accepting a step.
\begin{lemma}[\bf Sufficient condition for accepting step] \label{lem:delta accept2}
Under Assumptions~\ref{ass:lip_cont2} and \ref{assum:bhm}, if \eqref{eq:sufficiently accurate H g} holds, $r \ge e_k^+ - e_k  + \epsilon_g^{3/2}$, $\delta_k\leq 1$, and 
\begin{equation} \label{eq:delta accept2} 
    \delta_k \le \max \left\{ \min\left\{ \frac{1}{\kappa_{\rm bhm}}, 
        \frac{(1-\eta_1)\kappa_{\rm fod}}{L_2/3+2\kappa_{eg}+2+\kappa_{\rm eh} + \epsilon_H} 
        \right\}\|g_k\|, 
        \frac{-(1-\eta_1)\kappa_{\rm fod}\lambda_{\rm min}(H_k) - \epsilon_H}{L_2/3 + 2\kappa_{\rm eg} + 2 + \kappa_{\rm eh}} 
    \right\}, 
\end{equation}
then $\rho_k \geq \eta_1$ in Algorithm~\ref{alg:tr2}. 
\end{lemma}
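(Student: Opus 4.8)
The plan is to mirror the first-order argument of Lemma~\ref{lem:delta accept}, but now driven by the eigen-decrease condition \eqref{eq:Eigen decrease} and the second-order model-error bound \eqref{eq:phi-m2} of Lemma~\ref{lem:phi-m2}. First I would rewrite $\rho_k$ using $m_k(x_k)=\phi(x_k)$, which holds by the definition \eqref{eq:model_def}, together with the hypothesis $r \ge e_k^+ - e_k + \epsilon_g^{3/2}$, i.e. $e_k - e_k^+ + r \ge \epsilon_g^{3/2}$. Since the denominator $m_k(x_k)-m_k(x_k+s_k)$ is positive, this gives
\[ \rho_k \ge 1 - \frac{|\phi(x_k+s_k)-m_k(x_k+s_k)| - \epsilon_g^{3/2}}{m_k(x_k)-m_k(x_k+s_k)}. \]
The key observation is that the free $\epsilon_g^{3/2}$ supplied by $r$ exactly cancels the additive $\epsilon_g^{3/2}$ term in \eqref{eq:phi-m2}, so that after also using $\|s_k\|\le\delta_k$ the numerator is controlled by $(L_2/6+\kappa_{\rm eg}+1+\kappa_{\rm eh}/2)\,\delta_k^3 + \epsilon_H\delta_k^2/2$, whose coefficient doubled is precisely $L_2/3+2\kappa_{\rm eg}+2+\kappa_{\rm eh}$.

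Next I would split into the two cases corresponding to which argument attains the maximum on the right-hand side of \eqref{eq:delta accept2}. In the gradient case, $\delta_k \le \min\{1/\kappa_{\rm bhm},\,\cdot\,\}\,\|g_k\|$; the factor $1/\kappa_{\rm bhm}$ together with Assumption~\ref{assum:bhm} guarantees $\delta_k \le \|g_k\|/\|H_k\|$, so \eqref{eq:Eigen decrease} yields $m_k(x_k)-m_k(x_k+s_k)\ge \tfrac{\kappa_{\rm fod}}{2}\|g_k\|\delta_k$. Substituting the two bounds and cancelling the common factor $\delta_k$ reduces the target $\rho_k\ge\eta_1$ to the inequality $(L_2/3+2\kappa_{\rm eg}+2+\kappa_{\rm eh})\,\delta_k^2 + \epsilon_H\delta_k \le (1-\eta_1)\kappa_{\rm fod}\|g_k\|$. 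I would then linearize the quadratic term via $\delta_k\le\epsilon_{\max}$ (so $\delta_k^2 \le \epsilon_{\max}\delta_k$), after which the second entry of the inner minimum in \eqref{eq:delta accept2} is exactly what closes the inequality.

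In the negative-curvature case, $\delta_k \le \big(-(1-\eta_1)\kappa_{\rm fod}\lambda_{\rm min}(H_k)-\epsilon_H\big)/(L_2/3+2\kappa_{\rm eg}+2+\kappa_{\rm eh})$, a positive quantity only when $\lambda_{\rm min}(H_k)<0$, and \eqref{eq:Eigen decrease} gives $m_k(x_k)-m_k(x_k+s_k)\ge \tfrac{\kappa_{\rm fod}}{2}(-\lambda_{\rm min}(H_k))\,\delta_k^2$. Dividing the numerator bound by this quantity cancels a full factor of $\delta_k^2$, so $\rho_k\ge\eta_1$ becomes $(L_2/3+2\kappa_{\rm eg}+2+\kappa_{\rm eh})\,\delta_k + \epsilon_H \le (1-\eta_1)\kappa_{\rm fod}\,(-\lambda_{\rm min}(H_k))$, which rearranges to precisely the assumed bound on $\delta_k$.

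I expect the main obstacle to be bookkeeping rather than anything conceptual: keeping the constant $L_2/3+2\kappa_{\rm eg}+2+\kappa_{\rm eh}$ consistent across both cases, using the $\epsilon_g^{3/2}$ cancellation \emph{before} any further bounding, and invoking $\delta_k\le\epsilon_{\max}$ (which follows from the radius bound $\delta_k\le 1$ of Assumption~\ref{assum:upper_bound_e}) only in the gradient case, where the decrease is merely linear in $\delta_k$ and a quadratic error term must be absorbed — in the negative-curvature case the decrease is already quadratic in $\delta_k$, so no such linearization is needed.
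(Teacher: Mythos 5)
Your proposal is correct and follows essentially the same route as the paper's proof: absorb $e_k - e_k^+ + r \ge \epsilon_g^{3/2}$ into the numerator so that it cancels the additive $\epsilon_g^{3/2}$ in Lemma~\ref{lem:phi-m2}, lower-bound the denominator via \eqref{eq:Eigen decrease}, and split on which argument attains the maximum in \eqref{eq:delta accept2}. Your explicit linearization $\delta_k^2 \le \epsilon_{\max}\delta_k$ in the gradient case is exactly what the paper does implicitly by invoking $\delta_k \le 1$, so the two arguments coincide.
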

\begin{proof}
First, 
\[ \begin{aligned} 
    \rho_k &= \frac{\phi(x_k) + e_k - \phi(x_k + s_k) - e_k^+ + r}{m_k(x_k) - m_k(x_k + s_k)} \\
    &\ge \frac{\phi(x_k) - \phi(x_k + s_k) + \epsilon_g^{3/2}}{m_k(x_k) - m_k(x_k + s_k)} \\
    &\leftstackrel{\eqref{eq:phi-m2}}{\ge} \frac{\phi(x_k) - m_k(x_k + s_k) - (L_2/6+\kappa_{\rm eg}+1+\kappa_{\rm eh}/2) \delta_k^3 - \epsilon_H \delta_k^2/2}{m_k(x_k) - m_k(x_k + s_k)} \\
&\leftstackrel{\eqref{eq:model_def}}{=} 1 - \frac{(L_2/6+\kappa_{\rm eg}+1+\kappa_{\rm eh}/2) \delta_k^3 + \epsilon_H \delta_k^2/2}{m_k(x_k) - m_k(x_k + s_k)} \\
    &\leftstackrel{\eqref{eq:Eigen decrease}}{\ge} 1 - \frac{(L_2/3+2\kappa_{\rm eg}+2+\kappa_{\rm eh}) \delta_k^3 + \epsilon_H \delta_k^2}{\kappa_{\rm fod} \max\{\|g_k\| \min\{\|g_k\|/\|H_k\|, \delta_k\}, -\lambda_{\rm min}(H_k)\delta_k^2\}}.  
\end{aligned} \]
We consider two cases. If the first term in the maximization operation in \eqref{eq:delta accept2} is larger, it follows that $\delta_k \le \|g_k\| / \kappa_{\rm bhm} \le \|g_k\| / \|H_k\|$ and 
\[ \rho_k 
\ge 1 - \frac{(L_2/3+2\kappa_{\rm eg}+2+\kappa_{\rm eh}) \delta_k^3 + \epsilon_H \delta_k^2}{\kappa_{\rm fod} \|g_k\|\delta_k}
\ge 1 - \frac{(L_2/3+2\kappa_{\rm eg}+2+\kappa_{\rm eh})\delta_k^2 + \epsilon_H \delta_k^2}{\kappa_{\rm fod} \|g_k\|\delta_k}
\stackrel{\eqref{eq:delta accept2}}{\ge} 1 - (1-\eta_1) = \eta_1. 
\]

If the second term in the maximization operation in \eqref{eq:delta accept2} is larger, then 
\[ \rho_k 
\ge 1 - \frac{(L_2/3+2\kappa_{\rm eg}+2+\kappa_{\rm eh}) \delta_k^3 + \epsilon_H \delta_k^2}{- \kappa_{\rm fod} \lambda_{\rm min}(H_k)\delta_k^2} 
\stackrel{\eqref{eq:delta accept2}}{\ge} 1 - (1-\eta_1) = \eta_1. 
\]
\end{proof}

The next result provides a sufficient condition for a successful step. 
\begin{lemma}[\bf Sufficient condition for successful step] \label{lem:delta success2}
Under Assumptions~\ref{ass:lip_cont2} and \ref{assum:bhm}, if \eqref{eq:sufficiently accurate H g} holds, $r \ge e_k^+ - e_k + \epsilon_g^{3/2}$, $\delta_k\leq 1$, and 
\begin{equation} \label{eq:delta success2} 
    \delta_k \le \beta(x_k) \stackrel{\rm def}{=} 
    \max\left\{ 
    C_4\|\nabla\phi(x_k)\| - C_5\epsilon_g, 
    -C_6 \lambda_{\rm min}(\nabla^2\phi(x_k)) - C_7 \epsilon_H
    \right\}, 
\end{equation} 
where 
\begin{equation} \begin{aligned} \label{eq.constants}
    C_4 &\stackrel{\rm def}{=} \min\left\{ 
    \frac{1}{\kappa_{\rm bhm}+\kappa_{\rm eg}}, 
    \frac{(1-\eta_1)\kappa_{\rm fod}}{(L_2/3+2\kappa_{\rm eg}+3+\kappa_{\rm eh}) + (1-\eta_1)\kappa_{\rm fod}\kappa_{\rm eg}}, 
    \frac{1}{\kappa_{\rm eg} + \eta_2}
    \right\}\\ 
    C_5 &\stackrel{\rm def}{=} \max \left\{ 
    \frac{1}{\kappa_{\rm bhm}+\kappa_{\rm eg}}, 
    \frac{(1-\eta_1)\kappa_{\rm fod}}{(L_2/3+2\kappa_{\rm eg}+3+\kappa_{\rm eh}) + (1-\eta_1)\kappa_{\rm fod}\kappa_{\rm eg}}, 
    \frac{1}{\kappa_{\rm eg} + \eta_2}
    \right\} \\
    C_6 &\stackrel{\rm def}{=} \min\left\{
    \frac{(1-\eta_1)\kappa_{\rm fod}}{L_2/3+2\kappa_{\rm eg}+2+\kappa_{\rm eh} + (1-\eta_1)\kappa_{\rm fod}\kappa_{\rm eh}}, 
    \frac{1}{\kappa_{\rm eh} + \eta_2}
    \right\}\\
    C_7 &\stackrel{\rm def}{=} \max\left\{ 
    \frac{(1-\eta_1)\kappa_{\rm fod} + 1}{L_2/3+2\kappa_{\rm eg}+2+\kappa_{\rm eh} + (1-\eta_1)\kappa_{\rm fod}\kappa_{\rm eh}}, 
    \frac{1}{\kappa_{\rm eh} + \eta_2}
    \right\}
\end{aligned} 
\end{equation}
then, $\rho_k \geq \eta_1$ and $\beta^m_k \ge \eta_2 \delta_k$ in Algorithm~\ref{alg:tr2}. 
\end{lemma}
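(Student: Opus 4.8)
The plan is to mimic the first-order argument of Lemma~\ref{lem:delta success}, now splitting on which of the two terms attains the maximum defining $\beta(x_k)$ and invoking the second-order accept criterion of Lemma~\ref{lem:delta accept2} in place of Lemma~\ref{lem:delta accept}. First I would translate the hypotheses on the \emph{true} derivatives into lower bounds on the \emph{model} quantities $\|g_k\|$ and $-\lambda_{\rm min}(H_k)$. From \eqref{eq:sufficiently accurate H g: g} together with $\delta_k \le 1$ (so that $\delta_k^2 \le \delta_k$) one gets $\|g_k\| \ge \|\nabla\phi(x_k)\| - \kappa_{\rm eg}\delta_k - \epsilon_g$, while Weyl's eigenvalue perturbation inequality applied to \eqref{eq:sufficiently accurate H g: H} yields $-\lambda_{\rm min}(H_k) \ge -\lambda_{\rm min}(\nabla^2\phi(x_k)) - \kappa_{\rm eh}\delta_k - \epsilon_H$. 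These are the second-order analogues of the single bound on $\|g_k\|$ used in the first-order proof.

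Since $\delta_k \le \beta(x_k) = \max\{C_4\|\nabla\phi(x_k)\| - C_5\epsilon_g,\, -C_6\lambda_{\rm min}(\nabla^2\phi(x_k)) - C_7\epsilon_H\}$, at least one argument of the maximum is $\ge \delta_k$, and I would treat the two cases separately. In the gradient case $\delta_k \le C_4\|\nabla\phi(x_k)\| - C_5\epsilon_g$, I substitute the lower bound on $\|g_k\|$ and rearrange: the first two branches of the definitions \eqref{eq.constants} of $C_4,C_5$ are tuned so the self-referential term $\kappa_{\rm eg}\delta_k$ can be absorbed, producing $\delta_k \le \min\{1/\kappa_{\rm bhm}, \ldots\}\|g_k\|$, i.e.\ the first branch of \eqref{eq:delta accept2}, so Lemma~\ref{lem:delta accept2} delivers $\rho_k \ge \eta_1$. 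The third branch $1/(\kappa_{\rm eg}+\eta_2)$ of $C_4$ simultaneously forces $\|g_k\| \ge \|\nabla\phi(x_k)\| - \kappa_{\rm eg}\delta_k - \epsilon_g \ge \eta_2\delta_k$, whence $\beta^m_k \ge \|g_k\| \ge \eta_2\delta_k$.

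The negative-curvature case $\delta_k \le -C_6\lambda_{\rm min}(\nabla^2\phi(x_k)) - C_7\epsilon_H$ is symmetric: feeding the lower bound on $-\lambda_{\rm min}(H_k)$ into the hypothesis and rearranging (the first branch of $C_6,C_7$ is exactly engineered so that $\kappa_{\rm eh}\delta_k$ is absorbed) verifies the second branch of \eqref{eq:delta accept2}, namely $\delta_k \le \frac{-(1-\eta_1)\kappa_{\rm fod}\lambda_{\rm min}(H_k) - \epsilon_H}{L_2/3+2\kappa_{\rm eg}+2+\kappa_{\rm eh}}$, so again $\rho_k \ge \eta_1$ by Lemma~\ref{lem:delta accept2}; the second branch $1/(\kappa_{\rm eh}+\eta_2)$ of $C_6$ forces $-\lambda_{\rm min}(H_k) \ge \eta_2\delta_k$, giving $\beta^m_k \ge -\lambda_{\rm min}(H_k) \ge \eta_2\delta_k$. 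In both cases both conclusions hold, which is the claim. The hypothesis $r \ge e_k^+ - e_k + \epsilon_g^{3/2}$ of Lemma~\ref{lem:delta accept2} is inherited verbatim and needs no extra work.

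The main obstacle is purely the constant bookkeeping. One must check that each branch of the min/max defining $C_4$--$C_7$ is precisely what is needed to absorb the $\kappa_{\rm eg}\delta_k$ and $\kappa_{\rm eh}\delta_k$ terms incurred when passing from true to model quantities, and that the reduction $\delta_k^2 \le \delta_k$ (legitimate only because $\delta_k \le 1$, supplied by Assumption~\ref{assum:upper_bound_e}) reconciles the constant ``$+2$'' appearing inside \eqref{eq:delta accept2} with the ``$+3$'' appearing in the denominator of the $C_4$ branch, the extra unit of slack accounting for the gradient accuracy $\epsilon_g$ being measured in $\delta_k^2$ rather than $\delta_k$. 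Once these alignments are verified, the case analysis itself is routine.
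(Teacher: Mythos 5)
Your proposal follows the paper's proof essentially verbatim: the same lower bounds on $\|g_k\|$ and $-\lambda_{\rm min}(H_k)$ derived from \eqref{eq:sufficiently accurate H g}, the same case split on which term attains the maximum in \eqref{eq:delta success2}, the same reduction to the two branches of \eqref{eq:delta accept2} via Lemma~\ref{lem:delta accept2}, and the same use of the $1/(\kappa_{\rm eg}+\eta_2)$ and $1/(\kappa_{\rm eh}+\eta_2)$ branches to get $\beta^m_k \ge \eta_2\delta_k$. One small correction to your bookkeeping narrative: the ``$+3$'' in the denominator of the second branch of $C_4$ does not come from measuring the gradient error in $\delta_k^2$ rather than $\delta_k$ (that relaxation is what turns $\kappa_{\rm eg}\delta_k^2$ into $\kappa_{\rm eg}\delta_k$ and is then absorbed into the $(1-\eta_1)\kappa_{\rm fod}\kappa_{\rm eg}$ term); it comes from bounding the $\epsilon_H$ appearing in the denominator of \eqref{eq:delta accept2} by $1$ via Assumption~\ref{assum:upper_bound_e}, so that $(L_2/3+2\kappa_{\rm eg}+2+\kappa_{\rm eh})+\epsilon_H \le L_2/3+2\kappa_{\rm eg}+3+\kappa_{\rm eh}$.
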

\begin{proof}
By \eqref{eq:sufficiently accurate H g} we have
\begin{align*}
        \|g_k\| &\ge \|\nabla\phi(x_k)\| - \|g_k - \nabla\phi(x_k)\| \qquad &&\text{and} &-\lambda_{\rm min}(H_k) &\ge -\lambda_{\rm min}(\nabla^2\phi(x_k)) - \|H_k - \nabla^2\phi(x_k)\|  \\
        &\ge \|\nabla\phi(x_k)\| - \kappa_{\rm eg} \delta_k^2 - \epsilon_g, &&~ &&\ge -\lambda_{\rm min}(\nabla^2\phi(x_k)) - \kappa_{\rm eh} \delta_k - \epsilon_H.
\end{align*}
We first show $\rho_k \ge \eta_1$. We consider two cases. If the first term in the maximization operation in \eqref{eq:delta success2} is larger, then
\[ \frac{\|g_k\|}{\kappa_{\rm bhm}} 
\ge \frac{\|\nabla\phi(x_k)\| - \kappa_{\rm eg}\delta_k^2 - \epsilon_g}{\kappa_{\rm bhm}}
\ge \frac{\|\nabla\phi(x_k)\| - \kappa_{\rm eg}\delta_k - \epsilon_g}{\kappa_{\rm bhm}}
\stackrel{\eqref{eq:delta success2}}{\ge} \delta_k, 
\]
and 
\[ \frac{(1-\eta_1)\kappa_{\rm fod}}{(L_2/3+2\kappa_{eg}+2+\kappa_{\rm eh})+\epsilon_H} \|g_k\|
\ge \frac{(1-\eta_1)\kappa_{\rm fod}}{(L_2/3+2\kappa_{eg}+2+\kappa_{\rm eh})+1} (\|\nabla\phi(x_k)\| - \kappa_{\rm eg}\delta_k - \epsilon_g)
\stackrel{\eqref{eq:delta success2}}{\ge} \delta_k. 
\]

Alternatively, if the second term in the maximization operation in \eqref{eq:delta success2} is larger, then 
\[  \frac{-(1-\eta_1)\kappa_{\rm fod}\lambda_{\rm min}(H_k) - \epsilon_H}{L_2/3 + 2\kappa_{\rm eg} + 2 + \kappa_{\rm eh}} 
    \ge \frac{(1-\eta_1) \kappa_{\rm fod}(-\lambda_{\rm min}(\nabla^2\phi(x_k)) - \kappa_{\rm eh}\delta_k - \epsilon_H) - \epsilon_H}{L_2/3 + 2\kappa_{\rm eg} + 2 + \kappa_{\rm eh}} 
    \stackrel{\eqref{eq:delta success2}}{\ge} \delta_k. 
\]
Thus, $\rho_k \ge \eta_1$ according to Lemma~\ref{lem:delta accept2}. 

For the condition $\beta^m_k \ge \eta_2 \delta_k$, first consider the case where the first term in the maximization operation in \eqref{eq:delta success2} is larger, then  
\[  \|g_k\| \ge \|\nabla\phi(x_k)\| - \kappa_{\rm eg}\delta_k^2 - \epsilon_g 
\ge \|\nabla\phi(x_k)\| - \kappa_{\rm eg}\delta_k - \epsilon_g
\stackrel{\eqref{eq:delta success2}}{\ge} \eta_2 \delta_k. 
\]
If the second term in the maximization operation in \eqref{eq:delta success2} is larger, then 
\[ -\lambda_{\rm min}(H_k) 
\ge -\lambda_{\rm min}(\nabla^2\phi(x_k)) - \kappa_{\rm eh} \delta_k - \epsilon_H 
\stackrel{\eqref{eq:delta success2}}{\ge} \eta_2 \delta_k. 
\]
\end{proof}

The last result provides a bound on the progress made at each iteration. The proof uses similar arguments as in Lemma~\ref{lem:progress} and \cite[Lemma~3.7]{gratton2018complexity}. 
\begin{lemma}[\bf Progress made in each iteration] \label{lem:progress2}
In Algorithm~\ref{alg:tr2}, if $\rho_k \geq \eta_1$, $\delta_k\leq 1$ and $\beta^m_k \ge \eta_2 \delta_k$, then 
\begin{equation*}
    \phi(x_k) - \phi(x_{k+1}) \ge h(\delta_k) - e_k + e_k^+ - r, 
\end{equation*}
where 
\begin{equation}\label{eq.c_8}
	h(\delta) = C_8 \min\{1, \delta^{3}\} \text{, where }
    C_8 \stackrel{\rm def}{=} \frac{\eta_1 \eta_2 \kappa_{\rm fod}}{2}  \min\left\{ \frac{\eta_2}{\kappa_{\rm bhm}}, 1 \right\}. 
\end{equation}
If $\rho_k \ge \eta_1$ but $\beta^m_k < \eta_2 \delta_k$, then 
\begin{equation}
    \phi(x_k) - \phi(x_{k+1}) \ge - e_k + e_k^+ - r. 
\end{equation}
If $\rho_k < \eta_1$, then $\phi(x_{k+1}) = \phi(x_k)$. 
\end{lemma}

\begin{proof}
Similar to Lemma~\ref{lem:progress}, let $\rho_k \ge \eta_1$ and we have $\phi(x_k) - \phi(x_{k+1}) \ge \eta_1[m_k(x_k) - m_k(x_k + s_k)] - e_k + e_k^+ - r$. 
If $\beta^m_k = -\lambda_{\min}(H_k) \ge \eta_2 \delta_k$, the first term of this expression satisfies 
\[ \begin{aligned} 
     \eta_1[m_k(x_k) - m_k(x_k + s_k)] 
    \stackrel{\eqref{eq:Eigen decrease}}{\ge} \frac{\eta_1 \kappa_{\rm fod}}{2} \left( -\lambda_{\rm min}(H_k) \delta_k^2 \right) 
    \ge \frac{\eta_1 \kappa_{\rm fod}}{2} \eta_2 \delta_k^3; 
\end{aligned} \] 
If $\beta^m_k = \|g_k\| \ge \eta_2 \delta_k$, it satisfies 
\[ \begin{aligned} 
     \eta_1[m_k(x_k) - m_k(x_k + s_k)] ~
    &\leftstackrel{\eqref{eq:Eigen decrease}}{\ge} \frac{\eta_1 \kappa_{\rm fod}}{2} \|g_k\| \min\left\{ \frac{\|g_k\|}{\|H_k\|}, \delta_k \right\} 
    \ge \frac{\eta_1 \kappa_{\rm fod}}{2} \eta_2\delta_k \min\left\{ \frac{\eta_2\delta_k}{\kappa_{\rm bhm}}, \delta_k \right\} \\
         &\ge         \frac{\eta_1 \eta_2 \kappa_{\rm fod}}{2} \min\left\{ \frac{\eta_2}{\kappa_{\rm bhm}}, 1 \right\} \delta_k^3          
\end{aligned} \] 

If $\beta^m_k < \eta_2 \delta_k$, then 
\[  \eta_1[m_k(x_k) - m_k(x_k + s_k)] 
    \stackrel{\eqref{eq:Eigen decrease}}{\ge} \frac{\eta_1 \kappa_{\rm fod}}{2} \max\left\{ \|g_k\| \min\left\{ \frac{\|g_k\|}{\|H_k\|}, \delta_k\right\}, -\lambda_{\rm min}(H_k) \delta_k^2 \right\} 
    \ge 0. 
\]
If $\rho_k < \eta_1$, we have $x_{k+1} = x_k$, so $\phi(x_{k+1}) = \phi(x_k)$. 
\end{proof}

Next, to categorize the iterations $k=0,1,\dots, T-1$ into different types, we redefine the random indicator variables as follows: 
\begin{equation} \label{eq:indicator2} \begin{aligned}
    &I_k = \mathbbm{1}\left\{ \begin{array}{l}
        \|\nabla^2 M_k(X_k) - \nabla^2\phi(X_k)\| \le \kappa_{\rm eh} \Delta_k + \epsilon_H \\
        \text{ and } \|\nabla M_k(X_k) - \nabla\phi(X_k)\| \le \kappa_{\rm eg} \Delta_k^2 + \epsilon_g 
    \end{array} \right\},  \\
    &J_k = \mathbbm{1} \{r \ge \mathcal{E}^+_k + \mathcal{E}_k + \epsilon_g^{3/2} \} \\
    &\Theta_k = \mathbbm{1}\{\rho_k \ge \eta_1 \text{ and } \beta^m_k \ge \eta_2 \Delta_k \}, 
\end{aligned} \end{equation}
where $\beta^m_k$ is defined in \eqref{eq:beta_k_m} and $\bar\Delta$ is now defined as 
\begin{equation} 
    \bar\Delta = \min_{ 0 \le k \le T-1}\beta(X_k). 
\end{equation}
The interpretations of the indicators variables $I_k$, $J_k$ and $\Theta_k$ remain the same as in Section~\ref{sec:analysis1}, as does the definition of $\Theta_k'$. 
The definitions of $\Lambda_k, \Lambda_k'$, and $\bar\Delta'$ also remain the same but with the newly defined $\bar\Delta$. 

Similar to the first-order case, with the redefined random indicator variables, it follows from Lemma~\ref{lem:delta success2} that if $I_k J_k =1$ and $\Lambda_k=0$, then $\Theta_k = 1$ (and the step is successful). We formalize this in the corollary below.
\begin{corollary}[\bf Corollary to Lemma~\ref{lem:delta success2}]\label{cor.2nd} If $I_k J_k =1$ and $\Lambda_k=0$, then $\Theta_k = 1$. 
\end{corollary}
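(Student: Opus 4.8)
The plan is to obtain Corollary~\ref{cor.2nd} as an immediate consequence of Lemma~\ref{lem:delta success2}, the real work being only to check that the three indicator hypotheses $I_k=1$, $J_k=1$, and $\Lambda_k=0$ translate, on any given realization, into the analytic hypotheses required by that lemma. Since the asserted implication is pointwise on the sample space, I would fix an outcome on which $I_k J_k = 1$ and $\Lambda_k = 0$ and verify the hypotheses there. First, by the definition of $I_k$ in \eqref{eq:indicator2}, the event $I_k = 1$ is exactly the statement that $M_k$ is second-order sufficiently accurate at $X_k$, i.e.\ that \eqref{eq:sufficiently accurate H g} holds with $\delta_k = \Delta_k$; this supplies the accuracy hypothesis of Lemma~\ref{lem:delta success2}. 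Second, from $J_k = 1$, i.e.\ $r \ge \mathcal{E}_k^+ + \mathcal{E}_k + \epsilon_g^{3/2}$, together with the triangle inequality $e_k^+ - e_k \le |e_k^+| + |e_k| = \mathcal{E}_k^+ + \mathcal{E}_k$, I would deduce $r \ge e_k^+ - e_k + \epsilon_g^{3/2}$, which is precisely the noise-compensation hypothesis. Third, $\Lambda_k = 0$ means $\Delta_k \le \bar\Delta$, and since $\bar\Delta = \min_{0\le k\le T-1}\beta(X_k) \le \beta(X_k)$, this yields $\Delta_k \le \beta(X_k)$, which is exactly condition \eqref{eq:delta success2}.

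The only hypothesis of Lemma~\ref{lem:delta success2} not directly encoded by the indicators is $\delta_k \le 1$, and checking it is the one point that requires genuine care rather than bookkeeping. I would handle it by appealing to Assumption~\ref{assum:upper_bound_e} and, if necessary, restricting attention to the regime $\bar\Delta \le 1$ (equivalently, reading $\bar\Delta$ as capped at $1$), so that $\Delta_k \le \bar\Delta \le 1$ whenever $\Lambda_k=0$. This is the second-order counterpart of the fact that $h$ in \eqref{eq.c_8} is defined through $\min\{1,\delta^3\}$, and it is the main—indeed essentially the only—obstacle, since in the first-order Corollary~\ref{cor.1st} the corresponding Lemma~\ref{lem:delta success} carried no $\delta_k \le 1$ requirement and the matching was purely definitional there.

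With all four hypotheses in hand, Lemma~\ref{lem:delta success2} yields $\rho_k \ge \eta_1$ and $\beta^m_k \ge \eta_2 \Delta_k$ on the chosen outcome, which by the definition of $\Theta_k$ in \eqref{eq:indicator2} is exactly $\Theta_k = 1$. As the outcome was arbitrary among those with $I_k J_k = 1$ and $\Lambda_k = 0$, the stated implication follows.
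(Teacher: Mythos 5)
Your proof is correct and follows exactly the route the paper intends: the paper states this corollary without proof, as an immediate translation of the indicator events $I_k=1$, $J_k=1$, $\Lambda_k=0$ into the three hypotheses of Lemma~\ref{lem:delta success2}, which is precisely your bookkeeping. Your one substantive addition—flagging that the hypothesis $\delta_k\le 1$ is not encoded by any of the indicators and must be supplied separately via Assumption~\ref{assum:upper_bound_e} (or by capping $\bar\Delta$ at $1$, which is effectively what the paper does only later, in the proof of Theorem~\ref{thm:convergence2_bounded}, where $\bar\Delta$ is replaced by $\min\{\gamma^{-1},\min_k\beta(X_k)\}$)—is a point on which you are more careful than the paper, which leaves this step implicit.
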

Corollary \ref{cor.2nd} shows that if the gradient and Hessian approximations are accurate and the relaxation parameter is sufficiently large (relative to the noise in the function evaluations), and the TR radius is larger than a threshold, then the iteration is successful.

The following lemma is a direct consequence of the definitions of Oracles~\ref{oracle.first} and \ref{oracle.second} and the new definition of $I_k$ in \eqref{eq:indicator2}. 
 
\begin{lemma} \label{lem:true2} 
The random sequence $\{ M_k\}$ satisfies the submartingale-type condition
\begin{align}	\label{prob-delta-def2}
	\mathbbm{P}\{I_k = 1 | \mathcal{F}_{k-1}\} \geq p_1p_2 \stackrel{\rm def}{=} p_{12},
\end{align}
where $\mathcal{F}_{k-1}$ is defined in \eqref{eq:sig_alg1}. 
\end{lemma}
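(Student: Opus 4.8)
The plan is to condition on $\mathcal{F}_{k-1}$ and exploit the measurability structure recorded in Section~\ref{sec:stoch_proc}: given $\mathcal{F}_{k-1}$, both the iterate $X_k$ and the radius $\Delta_k$ are fixed, whereas the model gradient $\nabla M_k(X_k)=g_k$ and the model Hessian $\nabla^2 M_k(X_k)=H_k$ are produced by the fresh oracle draws $\xi^{(1)}_k$ and $\xi^{(2)}_k$. First I would rewrite the event $\{I_k=1\}$ from \eqref{eq:indicator2} as the intersection $A_k\cap B_k$, where
\[
A_k=\left\{\|\nabla^2 M_k(X_k)-\nabla^2\phi(X_k)\|\le\kappa_{\rm eh}\Delta_k+\epsilon_H\right\},\quad
B_k=\left\{\|\nabla M_k(X_k)-\nabla\phi(X_k)\|\le\kappa_{\rm eg}\Delta_k^2+\epsilon_g\right\}.
\]

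Next I would apply the two oracle guarantees directly. Since Algorithm~\ref{alg:tr2} calls Oracle~\ref{oracle.first} with input $(\Delta_k^2,p_1,X_k)$, its defining inequality \eqref{eq:first_order} yields $\mathbb{P}\{B_k\mid\mathcal{F}_{k-1}\}\ge p_1$, exactly as in the first-order case of Lemma~\ref{lem:true}; and since it calls Oracle~\ref{oracle.second} with input $(\Delta_k,p_2,X_k)$, inequality \eqref{eq:second_order} yields $\mathbb{P}\{A_k\mid\mathcal{F}_{k-1}\}\ge p_2$. Note that here the input $\Delta_k^2$ to the first-order oracle is precisely what makes the accuracy threshold $\kappa_{\rm eg}\Delta_k^2+\epsilon_g$ match the definition of $I_k$.

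The crux is passing from these two marginal bounds to the product $p_1p_2$. Because at iteration $k$ the gradient oracle samples $\xi^{(1)}_k$ and the Hessian oracle samples $\xi^{(2)}_k$ independently given the history encoded in $\mathcal{F}_{k-1}$, the events $A_k$ and $B_k$ are conditionally independent, so that
\[
\mathbb{P}\{I_k=1\mid\mathcal{F}_{k-1}\}=\mathbb{P}\{A_k\cap B_k\mid\mathcal{F}_{k-1}\}=\mathbb{P}\{A_k\mid\mathcal{F}_{k-1}\}\,\mathbb{P}\{B_k\mid\mathcal{F}_{k-1}\}\ge p_2p_1=p_{12}.
\]
The one point that must be argued with care is precisely this conditional independence: one has to confirm that the randomness consumed by the two oracles within a single iteration is drawn independently and does not overlap, which is built into the product form of the filtration \eqref{eq:sig_alg1}. (Absent independence one could only appeal to a union bound, giving the weaker $p_1+p_2-1$ in place of $p_1p_2$.) Everything else reduces to substituting the oracle definitions, so I expect this independence bookkeeping to be the only nontrivial obstacle.
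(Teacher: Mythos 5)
Your proposal is correct and matches the argument the paper intends: the paper states Lemma~\ref{lem:true2} without proof, calling it ``a direct consequence'' of the oracle definitions and the definition of $I_k$ in \eqref{eq:indicator2}, and the intended reasoning is exactly your decomposition into the two accuracy events followed by the oracle guarantees \eqref{eq:first_order} (with input $\Delta_k^2$) and \eqref{eq:second_order} (with input $\Delta_k$). One small caveat on the point you flag as the crux: the conditional independence of $\xi^{(1)}_k$ and $\xi^{(2)}_k$ given $\mathcal{F}_{k-1}$ is not actually ``built into'' the definition \eqref{eq:sig_alg1} --- a $\sigma$-algebra generated by tuples records measurability, not independence --- so it is an implicit modeling assumption about how the two oracles are invoked rather than something derivable from the filtration; the paper leaves it unstated, and your proof makes it explicit, which is an improvement. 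You are also right that without it one only gets $p_1+p_2-1$, which is weaker than $p_1p_2$ since $p_1p_2-(p_1+p_2-1)=(1-p_1)(1-p_2)\ge 0$.
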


With the redefined random variables, Lemmas~\ref{lem:total_succ_unsucc} and \ref{lem:Azuma} hold for Algorithm~\ref{alg:tr2}. 
Lemma~\ref{lem:total_progress} also holds for Algorithm~\ref{alg:tr2} with the function $h$ defined in \eqref{eq.c_8}, and 
Lemma~\ref{lem:Azuma2} holds with 
\begin{equation} \label{eq:p0 2}
    p_0 = 1 - 2 \exp\left( \frac{a}{2} (2\epsilon_f + \epsilon_g^{3/2} - r)\right). 
\end{equation}
Furthermore, Lemma~\ref{lem:"Bernstein"} holds without any changes. 
Therefore, with the newly redefined random variables, $h$ and $p_0$, the two main lemmas that combine inequalities, Lemmas~\ref{lem:combine1bounded} and \ref{lem:combine1subexpo}, still hold.  
We can arrive at the following two theorems for both bounded and subexponential noise cases. 
Their proofs are analogous to those of Theorems~\ref{thm:convergence1_bounded} (with Lemma~\ref{lem:combine1bounded}) and \ref{thm:convergence1_subexponential} (with Lemma~\ref{lem:combine1subexpo}), respectively, and, thus, for brevity we provide a sketch of the proof for Theorem~\ref{thm:convergence2_bounded} and omit the others. 

\begin{theorem} \label{thm:convergence2_bounded}
Let Assumptions~\ref{ass:lip_cont2} and \ref{assum:low_bound} hold for the objective function $\phi$. 
Let Assumption~\ref{assum:bhm} and $r \ge 2\epsilon_f+\epsilon_g^{3/2}$ hold for Algorithm~\ref{alg:tr2}. 
Under Assumption~\ref{assum:upper_bound_e}, given any 
$\epsilon' > \sqrt[3]{\frac{4\epsilon_f+2r}{C_8 \gamma^3 (2p_{12}-1)}}$, where $C_8$ is defined in \eqref{eq.c_8}, 
the sequence of iterates generated by  Algorithm~\ref{alg:tr2} with Oracle \ref{oracle.zero}.\ref{ass:bounded noise} satisfies
\begin{equation} \label{eq:convergence2_bounded P}
    \mathbbm{P} \left\{ \min_{0\le k \le T-1}\beta(X_k) \le \epsilon' \right\} \ge 1 - \exp \left( - \frac{(1 - \hat{p}_{12}/p_{12})^2}{2} T \right) 
\end{equation}
for any $\hat{p}_{12} \in \left(\frac{1}{2} + \frac{2\epsilon_f+r}{C_{8}(\gamma \epsilon')^3}, p_{12} \right]$ and any 
\begin{equation} \label{eq:convergence2_bounded T}
T \ge \left(\hat{p}_{12} - \frac{1}{2} - \frac{2\epsilon_f+r}{C_8(\gamma \epsilon')^3} \right)^{-1} \left[\frac{\phi(x_0) - \hat\phi}{C_8 (\gamma \epsilon')^3} + \frac{1}{2}\log_\gamma \left( 
    \min\left\{ \frac{\epsilon'}{\delta_0}, \frac{\delta_0}{\beta(x_0)} \right\} 
\right) + \frac{1}{2}\right]. 
\end{equation}
\end{theorem}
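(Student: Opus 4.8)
The plan is to replay the proof of Theorem~\ref{thm:convergence1_bounded} almost verbatim, after the three substitutions that the discussion preceding the statement already puts in place: the first-order threshold $C_1\min_{0\le k\le T-1}\|\nabla\phi(X_k)\|-C_2\epsilon_g$ is replaced by $\bar\Delta=\min_{0\le k\le T-1}\beta(X_k)$; the success probability $p_1$ is replaced by $p_{12}=p_1p_2$ (through Lemma~\ref{lem:true2} and Corollary~\ref{cor.2nd} in place of Lemma~\ref{lem:true} and Corollary~\ref{cor.1st}); and the quadratic progress function is replaced by $h(\delta)=C_8\min\{1,\delta^3\}$ from \eqref{eq.c_8}. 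With these replacements, Lemma~\ref{lem:combine1bounded} holds verbatim for Algorithm~\ref{alg:tr2}, since its proof invokes only Corollary~\ref{cor.1st}, Lemma~\ref{lem:total_progress}, Lemma~\ref{lem:total_succ_unsucc} and Lemma~\ref{lem:Azuma}, all of which carry over.

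First I would dispose of the trivial case $\bar\Delta\le 0$: then $\min_{0\le k\le T-1}\beta(X_k)\le 0<\epsilon$, so \eqref{eq:convergence2_bounded P} holds with probability one, and I may assume $\bar\Delta>0$. From Lemma~\ref{lem:combine1bounded} one then extracts
\[ \mathbbm{P}\left\{\left(\hat{p}_{12}-\frac12-\frac{2\epsilon_f+r}{h(\gamma\bar\Delta)}\right)T<\frac{\phi(x_0)-\hat\phi}{h(\gamma\bar\Delta)}+\frac12\left|\log_\gamma\frac{\bar\Delta}{\delta_0}\right|+\frac12\right\}\ge 1-\exp\left(-\frac{(1-\hat{p}_{12}/p_{12})^2}{2}T\right). \]
The hypothesis \eqref{eq:convergence2_bounded T} is exactly $T\ge Q(\epsilon)$, where
\[ Q(y)=\left(\hat{p}_{12}-\frac12-\frac{2\epsilon_f+r}{C_8(\gamma y)^3}\right)^{-1}\left[\frac{\phi(x_0)-\hat\phi}{C_8(\gamma y)^3}+\frac12\left|\log_\gamma\frac{y}{\delta_0}\right|+\frac12\right]. \]
I would then argue by contradiction precisely as in the first-order proof: assuming the event above holds while $\bar\Delta>\epsilon$, the key estimate $h(\gamma\bar\Delta)>C_8(\gamma\epsilon)^3$ lets me replace $h(\gamma\bar\Delta)$ by its cubic lower bound, and splitting into the cases $\bar\Delta\le\delta_0$ and $\bar\Delta>\delta_0$ (the latter handled through the auxiliary function in which the logarithmic term is frozen at $\log_\gamma(\delta_0/\beta(x_0))$, which decreases on $[\epsilon,\infty)$) contradicts $T\ge Q(\epsilon)$. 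Hence the event forces $\bar\Delta\le\epsilon$, which is \eqref{eq:convergence2_bounded P}.

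The one genuinely new point, and the step I expect to be the main obstacle, is the inequality $h(\gamma\bar\Delta)>C_8(\gamma\epsilon)^3$ for $\bar\Delta>\epsilon$, which is where the capped progress function $h(\delta)=C_8\min\{1,\delta^3\}$ must be handled with care. If $\gamma\bar\Delta\le 1$ then $h(\gamma\bar\Delta)=C_8(\gamma\bar\Delta)^3>C_8(\gamma\epsilon)^3$ immediately; if $\gamma\bar\Delta>1$ then $h(\gamma\bar\Delta)=C_8$, and one needs $C_8>C_8(\gamma\epsilon)^3$, i.e.\ $\gamma\epsilon<1$, which is exactly what Assumption~\ref{assum:upper_bound_e} ($\epsilon\le 1$, $\gamma\in(0,1)$) supplies. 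This inequality both justifies replacing the true $h(\gamma\bar\Delta)$ by the clean cubic $C_8(\gamma y)^3$ in $Q$ and guarantees that the coefficient $\hat{p}_{12}-\frac12-(2\epsilon_f+r)/(C_8(\gamma\epsilon)^3)$ is positive for the admissible range of $\hat{p}_{12}$; this is precisely the role played by the lower bound $\epsilon>\sqrt[3]{(4\epsilon_f+2r)/(C_8\gamma^3(2p_{12}-1))}$ in the statement, the cubic analogue of the first-order threshold.
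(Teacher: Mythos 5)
Your proposal is correct and follows essentially the same route as the paper's own (sketched) proof: carry over Lemma~\ref{lem:combine1bounded} with $p_{12}$, the new $\bar\Delta$, and $h(\delta)=C_8\min\{1,\delta^3\}$, then run the contradiction argument with the cases $\bar\Delta\le\delta_0$ and $\bar\Delta>\delta_0$. Your handling of the cap via $h(\gamma\bar\Delta)>C_8(\gamma\epsilon)^3$ using $\gamma\epsilon<1$ is exactly the paper's step ``$\epsilon\le 1<\gamma^{-1}$ implies $\min\{\gamma^{-1},\min_k\beta(X_k)\}<\epsilon$ forces $\min_k\beta(X_k)<\epsilon$,'' just phrased on the other side of the inequality.
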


\begin{proof}
Analogous to Lemma~\ref{lem:combine1bounded}, one can prove that for any positive integer $T$ and any $\hat{p}_1 \in [0,p_1]$  
\begin{equation} \label{eq:combine2bounded}  
    \mathbbm{P}\left\{ \left( \hat{p}_{12} - \frac{1}{2} - \frac{2\epsilon_f+r}{h(\gamma\bar\Delta)}\right)T < \frac{\phi(x_0)-\hat\phi}{h(\gamma\bar\Delta)} + \frac{1}{2} \left| \log_\gamma \frac{\bar\Delta}{\delta_0} \right| + \frac{1}{2} \right\} \ge 1 - \exp\left(-\frac{(1-\hat{p}_{12}/p_{12})}{2} T \right). 
\end{equation}
Now recall $h(\gamma\bar\Delta) = C_8 \min\{1,\gamma^3 \bar\Delta^3\} = C_8 \min\{1,\gamma^3 \min_{0\ge k \le T-1} \beta(X_k)\}^3$ and $0<\gamma<1$. Analogous to Theorem~\ref{thm:convergence1_bounded}, when $\bar\Delta \le \delta_0$, since the function $Q$ defined below is a decreasing function of $y$, 
\[  Q(y) = \left( \hat{p}_{12} - \frac{1}{2} - \frac{2\epsilon_f+r}{C_8\gamma^3y^3}\right)^{-1} 
\left[ \frac{\phi(x_0)-\hat\phi}{C_8\gamma^3y^3} + \frac{1}{2} \log_\gamma \frac{y}{\delta_0} + \frac{1}{2} \right], 
\] 
condition \eqref{eq:convergence2_bounded T} implies $T \ge Q(\epsilon')$, and the event in \eqref{eq:combine2bounded} is $T < Q(\min\{\gamma^{-1}, \min_{0\ge k \le T-1} \beta(X_k)\})$, we have $\min\{\gamma^{-1}, \min_{0\ge k \le T-1} \beta(X_k)\} < \epsilon'$. 
Considering $\epsilon'\leq 1 < \gamma^{-1}$, this means $\min_{0\ge k \le T_1} \beta(X_k) < \epsilon'$. 
If $\bar\Delta > \delta_0$, we replace the $0.5\log_\gamma(y/\delta_0)$ in function $Q$ with the constant $0.5\log_\gamma(\delta_0/\beta(x_0))$ and apply the same argument with the decreasing function. 
\end{proof}

\begin{remark}
    Condition $\epsilon' > \sqrt[3]{\frac{4\epsilon_f + 2r}{C_8 \gamma^3 (2p_0+2p_{12}-3)}}$ together with $r \ge 2\epsilon_f+\epsilon_g^{3/2}$
    implies a lower bound on $\epsilon'$ of ${\cal O}(\sqrt[3]{\epsilon_f}) + {\cal O}(\sqrt{\epsilon_g})$. 
    Then, Theorem~\ref{thm:convergence2_bounded} implies the best achievable accuracy $\epsilon$ for an approximate second-order critical point is of the form $\epsilon \ge {\cal O}(\sqrt[3]{\epsilon_f}) + {\cal O}(\sqrt{\epsilon_g})+ {\cal O}(\epsilon_H)$. 
\end{remark}

Additionally, the optimal value for $\hat{p}_{12}$ in Theorem~\ref{thm:convergence2_bounded} is 
\[ \frac{1}{2} + \frac{2\epsilon_f+r}{C_8(\gamma\epsilon')^3} + \frac{1}{T}  \left[\frac{\phi(x_0) - \hat\phi}{C_8 (\gamma\epsilon')^3} + \frac{1}{2}\log_\gamma \left( 
    \min\left\{ \frac{\epsilon'}{\delta_0}, \frac{\delta_0}{\beta(x_0)}\right\} 
\right)  + \frac{1}{2}\right], 
\]
and by setting $\hat{p}_{12}$ to this value, we derive the following corollary to Theorem~\ref{thm:convergence2_bounded}. 

\begin{corollary}
Under the setting of Theorem~\ref{thm:convergence2_bounded}, given any 
$\epsilon' > \sqrt[3]{\frac{4\epsilon_f+2r}{C_8 \gamma^3 (2p_{12}-1)}}$, where $C_8$ is defined in \eqref{eq.c_8}, it follows that 
\begin{align*} 
&\mathbbm{P} \left\{ \min_{0\le k \le T-1}\beta(X_k) \le \epsilon' \right\}  \\
\ge & 1 - \exp \left( - \frac{1}{2p_{12}^2 T} \left\{ \left(p_{12} - \frac{1}{2} - \frac{2\epsilon_f+r}{C_8(\gamma\epsilon')^3} \right) T - \left[ \frac{\phi(x_0) - \hat\phi}{C_8(\gamma\epsilon')^3} + \frac{1}{2}\log_\gamma \left( 
    \min\left\{ \frac{\epsilon'}{\delta_0}, \frac{\delta_0}{\beta(x_0)}\right\} 
\right)  + \frac{1}{2} \right] \right\}^2 \right)
\end{align*}
for any $T$ satisfying \eqref{eq:convergence2_bounded T} with the optimal $\hat{p}_{12}$. 
\end{corollary}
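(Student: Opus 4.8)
The plan is to reproduce the first-order argument of Theorem~\ref{thm:convergence1_bounded} in cubic form, since by construction every structural ingredient carries over to Algorithm~\ref{alg:tr2}. First I would record what transfers: with the indicators redefined in \eqref{eq:indicator2}, Lemma~\ref{lem:total_progress} (with $h$ from \eqref{eq.c_8}), Lemma~\ref{lem:total_succ_unsucc}, and Lemma~\ref{lem:Azuma} (now with $p_{12}=p_1p_2$ in place of $p_1$, via Lemma~\ref{lem:true2}) all hold verbatim, and Corollary~\ref{cor.2nd} replaces Corollary~\ref{cor.1st}. Under Oracle~\ref{oracle.zero}.\ref{ass:bounded noise} together with $r\ge 2\epsilon_f+\epsilon_g^{3/2}$ the indicator $J_k$ is identically $1$, because $\mathcal{E}_k+\mathcal{E}_k^++\epsilon_g^{3/2}\le 2\epsilon_f+\epsilon_g^{3/2}\le r$; this is exactly what lets the noise term in Lemma~\ref{lem:total_progress} be bounded by $2\epsilon_f+r$ and is the source of that quantity in the statement.

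Second, I would assemble these pieces into the combined inequality \eqref{eq:combine2bounded}, repeating the bookkeeping of Lemma~\ref{lem:combine1bounded} with $h(\gamma\bar\Delta)=C_8\min\{1,(\gamma\bar\Delta)^3\}$ and $p_{12}$. Concretely: multiply the trivial identity $\sum_{k=0}^{T-1}1=T$ by $(2\epsilon_f+r)/h(\gamma\bar\Delta)-\frac12$, add one half of \eqref{eq:total_succ_unsucc}, use Corollary~\ref{cor.2nd} to discard the $I_kJ_k(1-\Theta_k)(1-\Lambda_k)$ contributions, and close the chain of inequalities with the Azuma bound of Lemma~\ref{lem:Azuma}; the progress bound of Lemma~\ref{lem:total_progress} supplies the $\phi(x_0)-\hat\phi$ term. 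This produces a statement of precisely the form \eqref{eq:combine2bounded}.

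Third, I would convert \eqref{eq:combine2bounded} into the complexity claim by the contradiction/monotonicity scheme of Theorem~\ref{thm:convergence1_bounded}. If $\bar\Delta=\min_{0\le k\le T-1}\beta(X_k)\le 0$ then, since $\epsilon>0$, the conclusion is immediate. Otherwise I would define
\[
Q(y)=\left(\hat{p}_{12}-\frac12-\frac{2\epsilon_f+r}{C_8\gamma^3 y^3}\right)^{-1}\left[\frac{\phi(x_0)-\hat\phi}{C_8\gamma^3 y^3}+\frac12\log_\gamma\frac{y}{\delta_0}+\frac12\right],
\]
which decreases on the relevant range; then \eqref{eq:convergence2_bounded T} gives $T\ge Q(\epsilon)$ while the event in \eqref{eq:combine2bounded} reads $T<Q(\min\{\gamma^{-1},\bar\Delta\})$, and together they force $\min\{\gamma^{-1},\bar\Delta\}<\epsilon$. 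The two cases $\bar\Delta\le\delta_0$ and $\bar\Delta>\delta_0$ are handled exactly as before, the latter freezing the $\log_\gamma$ term to the constant $\frac12\log_\gamma(\delta_0/\beta(x_0))$.

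The only genuine departure from the first-order proof — and the step needing care — is the capped progress function $h(\delta)=C_8\min\{1,\delta^3\}$. Because of the cap, $Q$ is constant once $\gamma^3 y^3\ge 1$, so it is not globally monotone. The device that resolves this is the identity $h(\gamma\bar\Delta)=C_8\gamma^3\bigl(\min\{\gamma^{-1},\bar\Delta\}\bigr)^3$, which lets me track $y=\min\{\gamma^{-1},\bar\Delta\}$ so that the argument of the cube never exceeds $1$ and $h$ behaves as a clean cubic; this is precisely where Assumption~\ref{assum:upper_bound_e} enters. Since $\gamma\in(0,1)$, the bound $\min\{\gamma^{-1},\bar\Delta\}<\epsilon$ combined with $\epsilon\le 1<\gamma^{-1}$ yields $\bar\Delta<\epsilon$, i.e.\ $\min_{0\le k\le T-1}\beta(X_k)<\epsilon$, as required. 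Everything else is the direct cubic analogue of the quadratic accounting in Theorem~\ref{thm:convergence1_bounded}.
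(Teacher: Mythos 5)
There is a genuine gap: what you have written is a (re)proof of Theorem~\ref{thm:convergence2_bounded} itself, not of the corollary. The corollary's entire content is the \emph{specific} form of the tail bound
\[
\exp \left( - \tfrac{1}{2p_{12}^2 T} \left\{ \left(p_{12} - \tfrac{1}{2} - \tfrac{2\epsilon_f+r}{C_8(\gamma\epsilon)^3} \right) T - \left[ \tfrac{\phi(x_0) - \hat\phi}{C_8(\gamma\epsilon)^3} + \tfrac{1}{2}\log_\gamma \left( \min\left\{ \tfrac{\epsilon}{\delta_0}, \tfrac{\delta_0}{\beta(x_0)}\right\} \right)  + \tfrac{1}{2} \right] \right\}^2 \right),
\]
obtained by eliminating the free parameter $\hat{p}_{12}$ from the theorem. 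The paper's proof is exactly this elimination: the constraint \eqref{eq:convergence2_bounded T} is equivalent (since the prefactor $\hat{p}_{12}-\tfrac12-\tfrac{2\epsilon_f+r}{C_8(\gamma\epsilon)^3}$ is positive) to $\hat{p}_{12} \ge \tfrac{1}{2} + \tfrac{2\epsilon_f+r}{C_8(\gamma\epsilon)^3} + \tfrac{1}{T}[\cdots]$, the right-hand side of \eqref{eq:convergence2_bounded P} is decreasing in $\hat{p}_{12}$, so one sets $\hat{p}_{12}$ equal to that lower limit and substitutes into $(1-\hat{p}_{12}/p_{12})^2T/2$, which after writing $1-\hat{p}_{12}/p_{12} = (p_{12}-\hat{p}_{12})/p_{12}$ collapses to the displayed exponent. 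Your proposal never introduces this choice of $\hat{p}_{12}$, never performs the substitution, and never produces the $\tfrac{1}{2p_{12}^2T}\{\cdots\}^2$ form; it terminates at the conclusion of the theorem (``$\min_k\beta(X_k)<\epsilon$'' with the generic $\exp(-(1-\hat p_{12}/p_{12})^2T/2)$ tail), which is not the statement to be proved.

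The material you do present — the transfer of Lemmas~\ref{lem:total_progress}, \ref{lem:total_succ_unsucc}, \ref{lem:Azuma} with $p_{12}$, the observation that $J_k\equiv 1$ when $r\ge 2\epsilon_f+\epsilon_g^{3/2}$ under bounded noise, and the handling of the capped cubic $h$ via $h(\gamma\bar\Delta)=C_8\gamma^3(\min\{\gamma^{-1},\bar\Delta\})^3$ together with Assumption~\ref{assum:upper_bound_e} — is consistent with the paper's sketch of the theorem's proof and is not wrong. But it is prerequisite machinery, already available as Theorem~\ref{thm:convergence2_bounded}; to prove the corollary you must add the one-step optimization over $\hat{p}_{12}$ and verify the algebra of the resulting exponent.
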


The final theorem is for the setting of Oracle \ref{oracle.zero}.\ref{ass:subexponential noise}. 

\begin{theorem} \label{thm:convergence2_subexponential}
Let Assumptions~\ref{ass:lip_cont2} and \ref{assum:low_bound} hold for the objective function $\phi$. 
Let Assumptions~\ref{assum:bhm} and $r \ge 2\epsilon_f+\epsilon_g^{3/2}$ holds for Algorithm~\ref{alg:tr2}. 
Let $p_0$ be defined as \eqref{eq:p0 2}. Given 
$\epsilon' > \sqrt[3]{\frac{4\epsilon_f + 8/a + 2r}{C_8 \gamma^3 (2p_0+2p_{12}-3)}}$, where $C_8$ is defined in \eqref{eq.c_8},
the sequence of iterates generated by Algorithm~\ref{alg:tr2} with Oracle \ref{oracle.zero}.\ref{ass:subexponential noise} satisfies 
\begin{equation} \label{eq:convergence2 subexponential P}
    \mathbbm{P} \left\{ \min_{0\le k \le T-1}\beta(X_k) \le \epsilon' \right\}  \ge 1 - \exp \left( - \frac{(1 - \hat{p}_{12}/p_{12})^2}{2} T \right) - \exp \left( - \frac{(1 - \hat{p}_0/p_0)^2}{2} T \right)  - \exp\left( - \frac{a}{4} t \right)
\end{equation}
for any $\hat{p}_0$ and $\hat{p}_{12}$ such that $\hat{p}_0 + \hat{p}_{12} \in \left(\frac{3}{2} + \frac{2\epsilon_f + 4/a + r}{C_8(\gamma\epsilon')^3}, p_0 + p_{12} \right]$, any $t \ge 0$, and any 
\begin{equation} \label{eq:convergence2 subexponential T}
T \ge \left(\hat{p}_0 + \hat{p}_{12} - \frac{3}{2} - \frac{2\epsilon_f + 4/a + r}{C_8(\gamma\epsilon')^3} \right)^{-1} \left[\frac{\phi(x_0) - \hat\phi + t}{C_8(\gamma\epsilon')^3} + \frac{1}{2}\log_\gamma \left( 
    \min\left\{ \frac{\epsilon'}{\delta_0}, \frac{\delta_0}{\beta(x_0)}\right\} 
\right)  + \frac{1}{2}\right]. 
\end{equation}
\end{theorem}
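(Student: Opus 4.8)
The plan is to follow the proof of Theorem~\ref{thm:convergence1_subexponential} almost verbatim, with every first-order object replaced by its second-order counterpart. The paper has already done the heavy lifting: it states that Lemma~\ref{lem:combine1subexpo} continues to hold once $I_k,J_k,\Theta_k$ are taken from \eqref{eq:indicator2}, $h$ from \eqref{eq.c_8}, $p_0$ from \eqref{eq:p0 2}, and $p_{12}$ replaces $p_1$, with $\bar\Delta=\min_{0\le k\le T-1}\beta(X_k)$. So the starting point is the combine bound: for any positive integer $T$, any $\hat p_{12}\in[0,p_{12}]$, any $\hat p_0\in[0,p_0]$, and any $t\ge0$,
\[ \mathbbm{P}\left\{\left(\hat p_0+\hat p_{12}-\tfrac{3}{2}-\tfrac{2\epsilon_f+4/a+r}{h(\gamma\bar\Delta)}\right)T < \tfrac{\phi(x_0)-\hat\phi+t}{h(\gamma\bar\Delta)}+\tfrac{1}{2}\left|\log_\gamma\tfrac{\bar\Delta}{\delta_0}\right|+\tfrac{1}{2}\right\}\ge 1-\exp\!\left(-\tfrac{(1-\hat p_{12}/p_{12})^2}{2}T\right)-\exp\!\left(-\tfrac{(1-\hat p_0/p_0)^2}{2}T\right)-\exp\!\left(-\tfrac{a}{4}t\right). \]

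First I would dispose of the degenerate case: since $\bar\Delta=\min_{0\le k\le T-1}\beta(X_k)$, if $\bar\Delta\le0$ then the event $\{\min_{0\le k\le T-1}\beta(X_k)\le\epsilon\}$ holds deterministically and \eqref{eq:convergence2 subexponential P} is trivial, so I may assume $\bar\Delta>0$. Next I introduce the univariate function
\[ Q(y)=\left(\hat p_0+\hat p_{12}-\tfrac{3}{2}-\tfrac{2\epsilon_f+4/a+r}{C_8\gamma^3 y^3}\right)^{-1}\left[\tfrac{\phi(x_0)-\hat\phi+t}{C_8\gamma^3 y^3}+\tfrac{1}{2}\log_\gamma\tfrac{y}{\delta_0}+\tfrac{1}{2}\right], \]
obtained by substituting $h(\gamma\bar\Delta)=C_8\min\{1,(\gamma y)^3\}$ with $y=\bar\Delta$ and taking the $\bar\Delta\le\delta_0$ branch of the absolute value. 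On $y\in(0,\gamma^{-1}]$ the cap in $h$ is inactive, so $h(\gamma y)=C_8(\gamma y)^3$, and $Q$ is strictly decreasing in $y$ because as $y$ shrinks both the reciprocal prefactor and the bracketed term grow.

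The key step is the monotonicity/contradiction argument. I would verify that \eqref{eq:convergence2 subexponential T} is precisely the statement $T\ge Q(\epsilon)$, while the event inside the combine bound rewrites as $T<Q\!\left(\min\{\gamma^{-1},\min_{0\le k\le T-1}\beta(X_k)\}\right)$, the truncation at $\gamma^{-1}$ arising from the saturation cap in $h$. On that event, since $Q$ is decreasing, $Q(\epsilon)\le T<Q\!\left(\min\{\gamma^{-1},\min_k\beta(X_k)\}\right)$ forces $\min\{\gamma^{-1},\min_k\beta(X_k)\}<\epsilon$; because Assumption~\ref{assum:upper_bound_e} gives $\epsilon\le1<\gamma^{-1}$, this is equivalent to $\min_{0\le k\le T-1}\beta(X_k)<\epsilon$. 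Transferring the probability through the combine bound then yields \eqref{eq:convergence2 subexponential P}. The branch $\bar\Delta>\delta_0$ is handled exactly as in Theorem~\ref{thm:convergence1_bounded}: I replace $\tfrac{1}{2}\log_\gamma(y/\delta_0)$ in $Q$ by the constant $\tfrac{1}{2}\log_\gamma(\delta_0/\beta(x_0))$ (reflecting that a too-small initial radius only needs a bounded number of expansion steps) and rerun the same decreasing-function argument.

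Because Lemmas~\ref{lem:total_progress}--\ref{lem:Azuma}, \ref{lem:Azuma2}, and \ref{lem:"Bernstein"} have all been shown to carry over unchanged, the substance here is bookkeeping rather than new estimates. The one genuine subtlety — and the step I expect to demand the most care — is the cap $\min\{1,(\gamma\bar\Delta)^3\}$ in $h$ from \eqref{eq.c_8}, which has no analogue in the first-order $h(\delta)=C_3\delta^2$; it is exactly what produces the truncated quantity $\min\{\gamma^{-1},\min_k\beta(X_k)\}$ and renders $\epsilon\le1$ (Assumption~\ref{assum:upper_bound_e}) indispensable, since without $\epsilon\le1<\gamma^{-1}$ the contradiction would only deliver the vacuous $\min_k\beta(X_k)<\gamma^{-1}$. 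Beyond this I would track the three independent failure modes (inaccurate second-order models, uncompensated function noise, and the subexponential deviation governed by $t$), each contributing one exponential tail term, and confirm that the stated lower bound $\epsilon>\sqrt[3]{(4\epsilon_f+8/a+2r)/(C_8\gamma^3(2p_0+2p_{12}-3))}$ is precisely what keeps the prefactor in $Q$ positive at $y=\epsilon$, so that the admissible window for $\hat p_0+\hat p_{12}$ is nonempty.
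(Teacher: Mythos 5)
Your proposal is correct and follows exactly the route the paper intends: the paper omits this proof, stating only that it is analogous to Theorems~\ref{thm:convergence1_bounded}/\ref{thm:convergence1_subexponential} via the second-order version of Lemma~\ref{lem:combine1subexpo} and the sketch of Theorem~\ref{thm:convergence2_bounded}, and you have filled in precisely that argument, including the two subtleties that matter (the cap $\min\{1,(\gamma\bar\Delta)^3\}$ in $h$ and the resulting need for $\epsilon\le 1<\gamma^{-1}$ from Assumption~\ref{assum:upper_bound_e}, which the theorem statement arguably should list explicitly as Theorem~\ref{thm:convergence2_bounded} does).
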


\section{Numerical experiments: Adversarial example} 
\label{sec:dual}
In this section, we explore the tightness of our theoretical results through numerical experiments. The goal is to investigate whether over the course of optimization the minimum gradient norm $\|\nabla\phi(x_k)\|$ encountered by Algorithm~\ref{alg:tr} in the presence of noise  is consistent with our theoretical lower bounds on $\epsilon$. Since our analysis is for the worst-case scenario,  we consider synthetic experiments where we injected noise  in an adversarial manner  to make the algorithm perform as poorly as possible at each step. 

First, we choose a simple function, $\phi(x) = L_1 \|x\|^2 /2$, that satisfies Assumptions~\ref{assum:lip_cont} and \ref{assum:low_bound}. We apply Algorithm~\ref{alg:tr} using linear models, thus,  Assumption~\ref{assum:bhm} is satisfied with $\kappa_{\rm bhm} = 0$. 
We do not consider quadratic models in this experiment because developing the adversarial numerical example becomes significantly more complex. 
The solution of the trust-region subproblem for linear models can be expressed as 
\begin{equation}
    s_k= \bcases
    0 &\text{if } g_k=0 \\
    -\frac{\delta_k}{\|g_k\|} g_k &\text{otherwise. }
    \ecases
\end{equation} 
When $s_k=g_k=0$, the step is rejected. In most of the discussion below, $g_k$ is assumed to be nonzero unless stated otherwise. 
Given the trial step $s_k$, the noise in the function value is set as follows to ``encourage'' the algorithm to reject good  steps (those that decrease $\phi$) 
and to accept bad steps (those that increase $\phi$). Specifically, the noise was set as follows, 
\begin{equation} \label{eq:adversarial e}
    (e_k, e_k^+) = \bcases
    (-\epsilon_f, +\epsilon_f) &\text{if } \phi(x_k+s_k) \le \phi(x_k) \\
    (+\epsilon_f, -\epsilon_f) &\text{otherwise.} 
    \ecases
\end{equation} 
The noise setting \eqref{eq:adversarial e} does not ensure the worst-case behavior over all iterations of the algorithm, since accepting a good step may increase the trust-region radius and result in worse performance later on. However, this setting reflects our theoretical analysis, which considers only the worse outcome of each iteration separately. 

To generate gradient approximations in an adversarial manner, we again aim for the algorithm to accept steps that make $\phi$ increase as much as possible and avoid taking  steps when increasing $\phi$ is not possible. 
Since our Oracle~\ref{oracle.first} only offers sufficiently accurate gradient with probability $p_1$, at the beginning of each iteration, we set a random variable $I_k$ to 1 with probability $p_1$ and 0 with probability $1-p_1$. 
During iterations where the gradient is not sufficiently accurate ($I_k=0$), $g_k$ is chosen so that the increase in function value $\phi(x_k+s_k) - \phi(x_k)= -L_1 \delta_k \langle x_k, g_k / \|g_k\| \rangle + L_1\delta_k^2/2$ is maximized under condition that the step is accepted, i.e., $\rho_k \geq \eta_1$. Thus, the following problem is solved.  
\begin{equation} \label{prob:most loss I=0} 
 \begin{aligned}
    &\max_{g_k} &&\phi(x_k+s_k) - \phi(x_k)= -L_1\delta_k \langle x_k, g_k / \|g_k\| \rangle + L_1\delta_k^2/2 \;\; &&\text{Maximize loss.}\\
    &\text{s.t.} &&\rho_k = \frac{L_1\|x_k\|^2/2 - L_1\|x_k-\delta_k g_k / \|g_k\|\|^2/2 + 2\epsilon_f+r}{\|g_k\| \delta_k} \ge \eta_1 \;\; &&\text{Step accepted.} \\
    &~ &&\|g_k\| > 0.  && 
\end{aligned} 
\end{equation}

If \eqref{prob:most loss I=0} is infeasible or its optimal value is less than zero, then $g_k$ is set to zero so that the step is rejected, otherwise $g_k$ is set to its optimal solution. 
We note that expressions for $\rho_k$ depends on how we use \eqref{eq:adversarial e}. Since the goal here is to accept a step with  possible increase in $\phi$, we then set $(e_k, e_k^+) =  (+\epsilon_f, -\epsilon_f) $ and expression for $\rho_k$ is 
\begin{equation} \label{prob:rho} 
\rho_k = \frac{\phi(x_k) - \phi(x_k+s_k) + 2\epsilon_f+r}{m(x_k) - m(x_k+s_k)} 
= \frac{L_1\|x_k\|^2/2 - L_1\|x_k-\delta_k g_k / \|g_k\|\|^2/2 + 2\epsilon_f+r}{\|g_k\| \delta_k}. 
\end{equation}

On the other hand, if $I_k = 1$, problem \eqref{prob:most loss I=0} needs to be solved with the additional constraint that the gradient is sufficiently accurate, i.e.,
\begin{equation}\label{eq:add_constr}
 \begin{aligned}
    &~ &&\|g_k - \nabla\phi(x_k)\| = \|g_k - L_1x_k\| \le \kappa_{\rm eg} \delta_k + \epsilon_g \;\; &&\text{Gradient sufficiently accurate.}
\end{aligned} 
\end{equation}

Either with the additional constraint \eqref{eq:add_constr} or not, problem \eqref{prob:most loss I=0} is not solved as  is, but with the change of variables $y_1 = \langle x_k, g_k/\|g_k\| \rangle$ and $y_2 = \|g_k\|$. 
The reformulation is 
\begin{equation} \label{prob:most loss} 
 \begin{aligned}
    &\min_{y_1,y_2} &&y_1 \;\; &&\text{Maximize loss.}\\
    &\text{s.t.} &&\eta_1y_2 - L_1 y_1 \le (2\epsilon_f + r)/\delta_k - L_1\delta_k/2 \;\; &&\text{Step accepted.} \\
    &~ &&|y_1| \le \|x_k\| \text{ and } y_2 \ge \min\{10^{-6}, 10^{-2}\|L_1x_k\|\},  &&
\end{aligned} 
\end{equation}
where $y_2 = \|g_k\| > 0$ is replaced by $y_2 \ge \min\{10^{-6}, 10^{-2}\|L_1x_k\|\}$ so that the ``minimization'' is well-defined. 
Constraint \eqref{eq:add_constr} is then written as 
\begin{equation} \label{eq:add_constr_y} 
 \begin{aligned}
  &~ &&y_2^2 - 2L_1y_1y_2 + (L_1\|x_k\|)^2 \le (\kappa_{\rm eg} \delta_k + \epsilon_g)^2 \;\; &&\text{Gradient sufficiently accurate.} \\
\end{aligned} 
\end{equation}

Problem \eqref{prob:most loss} (with or without \eqref{eq:add_constr_y}) can be solved analytically, and the  corresponding optimal $g_k$  can be recovered from the optimal $y_1$ and $y_2$. 
The procedures are detailed in Appendix~\ref{app:dual details}. 

When $I_k=1$, if the optimal value of \eqref{prob:most loss}-\eqref{eq:add_constr_y} is less than $\delta_k/2$ (meaning the loss is greater than 0), $g_k$ is set using the optimal values of $y_1$ and $y_2$; 
if this problem is infeasible, then $g_k$ is simply set to $\nabla\phi(x_k)=L_1 x_k$, since an acceptable step does not exist anyway; and, 
if the optimal value of this problem is greater than or equal to $\delta_k/2$, the algorithm cannot be tricked into taking a step that increases $\phi$, so the worst-case scenario is when no step is taken at all. 
In the third case, we try to find a value for $g_k$ to prevent any step from being taken by solving the two optimization problems \eqref{prob:reject1} and \eqref{prob:reject2} described below:
\begin{equation} \label{prob:reject1}  \begin{aligned}
    &\max_{y_1,y_2} &&\eta_1y_2 - L_1 y_1 \;\; &&\text{Try to get the step rejected.} \\
    &\text{s.t.} &&y_2^2 - 2L_1y_1y_2 + (L_1\|x_k\|)^2 \le (\kappa_{\rm eg} \delta_k + \epsilon_g)^2 \;\; &&\text{Gradient  sufficiently accurate.} \\
    &~ &&y_1 < \delta_k/2  \;\; &&\text{Step leads to loss of progress. }
     \\
    &~ &&|y_1| \le \|x_k\| \text{ and } y_2 \ge \min\{10^{-6}, 10^{-2}\|L_1x_k\|\}  &&
\end{aligned} 
\end{equation}
\begin{equation} \label{prob:reject2}  \begin{aligned}
    &\max_{y_1,y_2} &&\eta_1y_2 - L_1 y_1 \;\; &&\text{Try to get the step rejected} \\
    &\text{s.t.} &&y_2^2 - 2L_1y_1y_2 + (L_1\|x_k\|)^2 \le (\kappa_{\rm eg} \delta_k + \epsilon_g)^2 \;\; &&\text{Gradient sufficiently accurate.} \\
    &~ &&y_1 \ge \delta_k/2  \;\;&&\text{Step leads to progress.} 
       \\
    &~ &&|y_1| \le \|x_k\| \text{ and } y_2 \ge \min\{10^{-6}, 10^{-2}\|L_1x_k\|\}.  &&
\end{aligned} 
\end{equation}
There are two problems because of \eqref{eq:adversarial e}. 
In the first problem where the second constraint is $\phi(x_k+s_k) > \phi(x_k)$, $\rho_k$ \eqref{prob:rho} is calculated with a $+2\epsilon_f$ term in the numerator; and, in the second problem where the second constraint is $\phi(x_k+s_k) \le \phi(x_k)$, $\rho_k$ \eqref{prob:rho} is calculated with a $-2\epsilon_f$ term in the numerator. 
If either one of the two optimal values is greater than $(\pm2\epsilon_f+r)/\delta_k - L_1 \delta_k/2$, then we set $g_k$ to the corresponding optimal solution, which will lead to a rejection of the step;
otherwise, a step that decreases $\phi$ would be unavoidable, so we try to inject  noise that can minimize the decrease in $\phi$ by solving the following problem:
\begin{equation} \label{prob:least gain} 
 \begin{aligned}
    &\min_{y_1,y_2} &&y_1 \;\; &&\text{Minimize gain.}\\
    &\text{s.t.} &&y_2^2 - 2L_1y_1y_2 + (L_1\|x_k\|)^2 \le (\kappa_{\rm eg} \delta_k + \epsilon_g)^2 \;\; &&\text{Gradient  sufficiently accurate.} \\
    &~ &&|y_1| \le \|x_k\| \text{ and } y_2 \ge \min\{10^{-6}, 10^{-2}\|L_1x_k\|\}.  &&
\end{aligned} 
\end{equation}

Similar to \eqref{prob:most loss}, the optimization problems  \eqref{prob:reject1},  \eqref{prob:reject2}, \eqref{prob:least gain}, can be solved analytically. 
See  Appendix~\ref{app:dual details} for details. 

For our experiments, we set the parameters of the objective function to $n=20, L_1=1$;
the parameters for the approximation models to $p_1=0.8, \kappa_{\rm eg}=1$;
and the parameters for the algorithm to $\eta_1=0.25, \eta_2=1$,  $\gamma=0.8$, and $r=2\epsilon_f$. 
Then, the theoretical lower bound on $\epsilon$ in Theorem~\ref{thm:convergence1_bounded} is $5\sqrt{30\epsilon_f} + 7/3\epsilon_g \approx 27.39\sqrt{\epsilon_f} + 2.33\epsilon_g$. 
A minor detail here in calculating this theoretical value is that $\kappa_{\rm fcd}$ is set to 2 because the model decrease is $\|g_k\|\delta_k$, even though we assumed $\kappa_{\rm fcd}\in(0,1]$. This is not an issue because the property $\kappa_{\rm fcd} \le 1$ was never used in the analysis. 

We experiment with four noise settings where $(\epsilon_f, \epsilon_g)$ is set to  $(0.2,4), (0,4), (0.2,0)$ and $(0,0)$, respectively. 
We initialize Algorithm~\ref{alg:tr} at $x_0=1.4 \cdot \mathbf{1}$ and with $\delta_0 = 0.5$, and inject adversarial noise as described above at each iteration. 
Figure~\ref{fig:adversarial} shows how $\|\nabla\phi(x_k)\|$ and $\delta_k$ change over the first 250 iterations. 
We note that $\|\nabla\phi(x_k)\|$ stabilizes around 4.8, 4, 1.2, and 0, respectively, for the four noise settings, and executing the same experiment multiple times yields similar results. 
In comparison, their theoretical lower bounds on $\epsilon$ are $21.58(=12.25+9.33 \approx 5\sqrt{30\epsilon_f} + 7\epsilon_g/3)$, $9.33$, $12.25$, and 0, respectively. 
This indicates in the lower bound on $\epsilon$ in Theorem~\ref{thm:convergence1_bounded} the coefficient of $\epsilon_g$ is at most 7/3 times its optimal value, but the coefficient of $\sqrt{\epsilon_f}$ can be up to 10 times as big. 
This indicates the theoretical lower bound on $\epsilon$ is not unreasonably large. 

\begin{figure}[htp]
    \centering
\begin{subfigure}[b]{0.5\linewidth}
    \includegraphics[width=\linewidth]{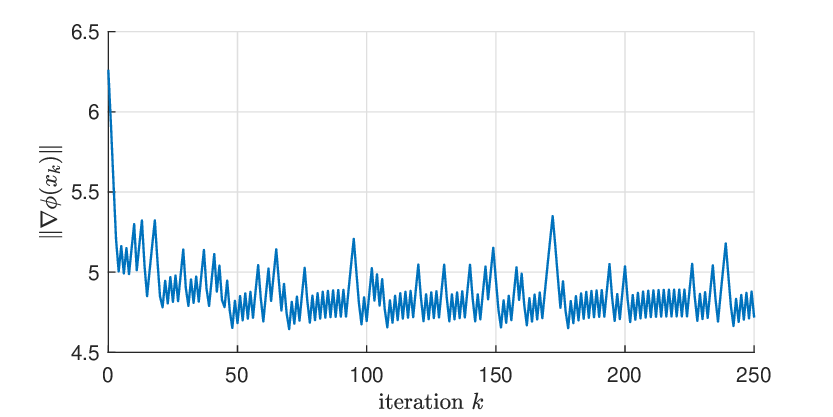}
    \caption{Change of $\|\nabla\phi(x_k)\|$ over $k$ when $\epsilon_f = 0.2$ and $\epsilon_g = 4$}
\end{subfigure}%
\begin{subfigure}[b]{0.5\linewidth}
    \centering
    \includegraphics[width=\linewidth]{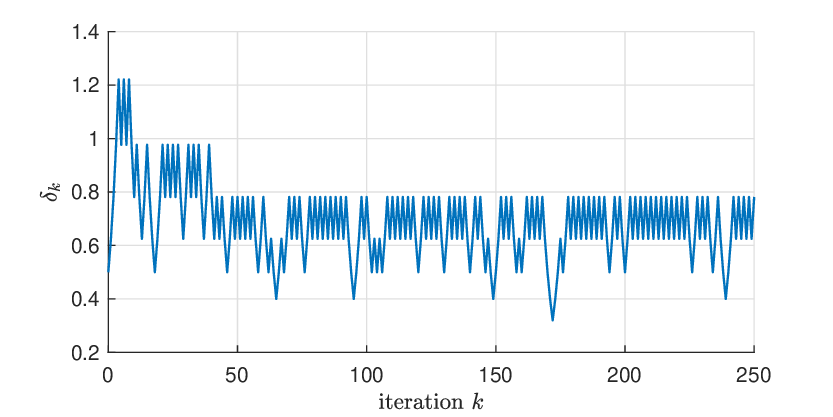}
    \caption{Change of $\delta_k$ over $k$ when $\epsilon_f =0.2$ and $\epsilon_g = 4$}
\end{subfigure}\\
\begin{subfigure}[b]{0.5\linewidth}
    \centering
    \includegraphics[width=\linewidth]{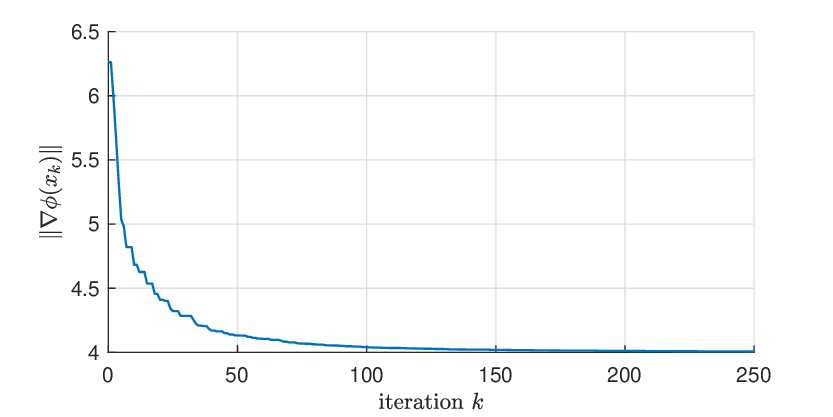}
    \caption{Change of $\|\nabla\phi(x_k)\|$ over $k$ when $\epsilon_f = 0$ and $\epsilon_g = 4$}
\end{subfigure}%
\begin{subfigure}[b]{0.5\linewidth}
    \centering
    \includegraphics[width=\linewidth]{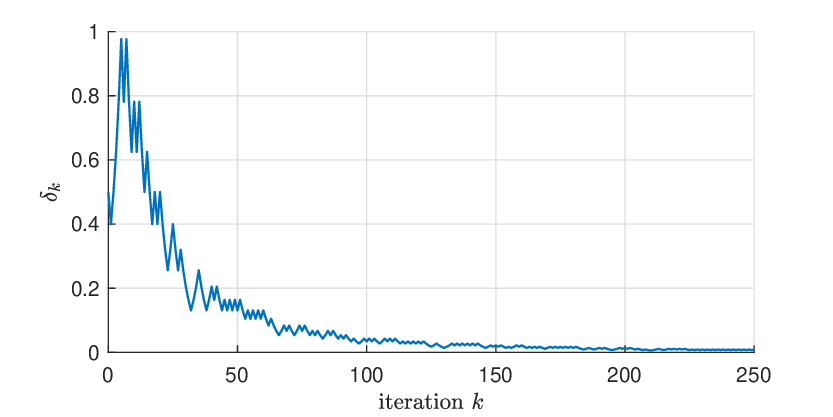}
    \caption{Change of $\delta_k$ over $k$ when $\epsilon_f = 0$ and $\epsilon_g = 4$}
\end{subfigure}\\
\begin{subfigure}[b]{0.5\linewidth}
    \centering
    \includegraphics[width=\linewidth]{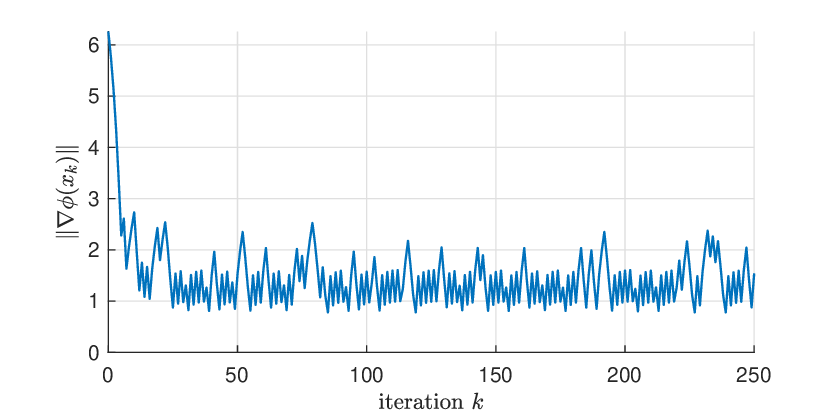}
    \caption{Change of $\|\nabla\phi(x_k)\|$ over $k$ when $\epsilon_f = 0.2$ and $\epsilon_g = 0$}
\end{subfigure}%
\begin{subfigure}[b]{0.5\linewidth}
    \centering
    \includegraphics[width=\linewidth]{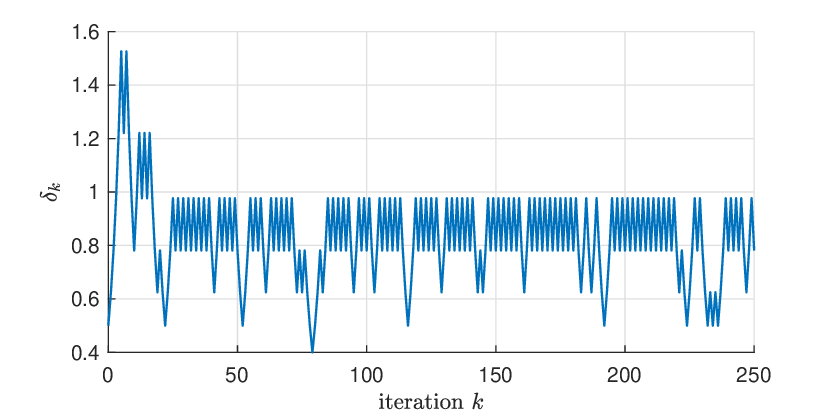}
    \caption{Change of $\delta_k$ over $k$ when $\epsilon_f = 0.2$ and $\epsilon_g = 0$}
\end{subfigure}\\
\begin{subfigure}[b]{0.5\linewidth}
    \centering
    \includegraphics[width=\linewidth]{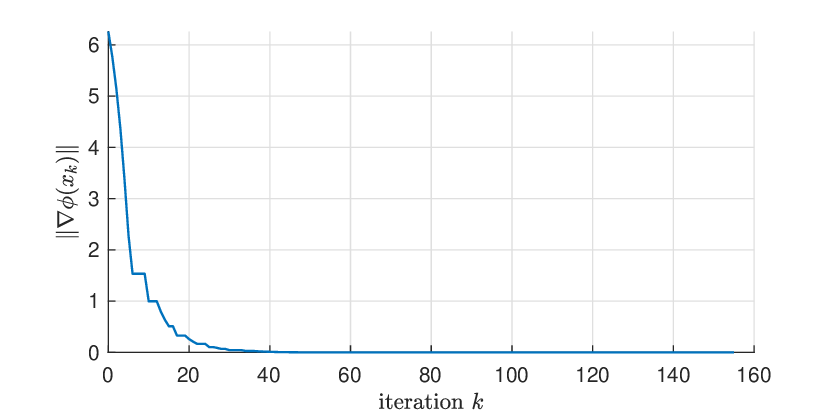}
    \caption{Change of $\|\nabla\phi(x_k)\|$ over $k$ when $\epsilon_f = \epsilon_g = 0$}
\end{subfigure}%
\begin{subfigure}[b]{0.5\linewidth}
    \centering
    \includegraphics[width=\linewidth]{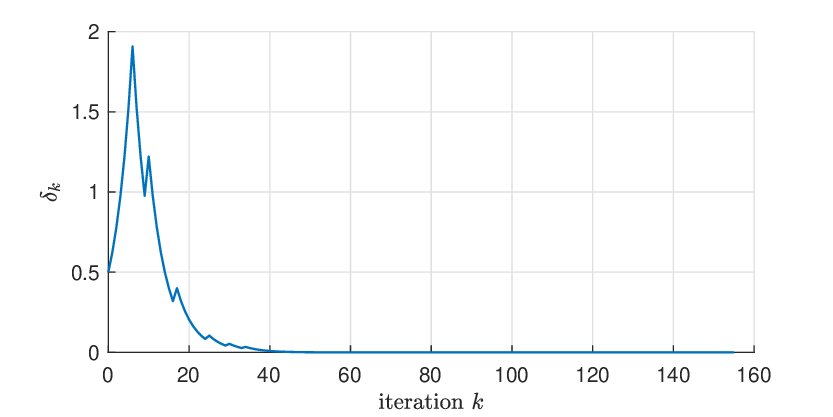}
    \caption{Change of $\delta_k$ over $k$ when $\epsilon_f = \epsilon_g = 0$}
\end{subfigure}
\caption{Performance of Algorithm~\ref{alg:tr} with linear approximation models on $\phi(x) = L_1\|x\|^2/2$ under adversarial noise when $r=2\epsilon_f$.}
    \label{fig:adversarial}
\end{figure}

\section{Numerical experiments: Investigating the effect of $r$}
\label{sec:r}
In this section, we explore numerically the effect of the choice of the hyperparameter $r$ on the performance of the algorithm. Our theory requires that $r \ge 2\epsilon_f$ in Algorithm~\ref{alg:tr} and $r \ge 2\epsilon_f + \epsilon_g^{3/2}$ in Algorithm~\ref{alg:tr2} to offset the function evaluation errors at $x_k$ and $x_k+s_k$. 
If $r$ is set smaller than these values, it is possible, under our particular assumptions on the zeroth-order oracle,  that 
the algorithm will fail to make successful steps due to noise in the function evaluations, even if the gradient  $\|\nabla \phi(\cdot)\|$ is not small. Setting $r$ to be larger than $2\epsilon_f$ or $2\epsilon_f + \epsilon_g^{3/2}$ allows the algorithm to progress until $\|\nabla \phi(\cdot)\|$ reaches the lower bound $\epsilon$, whose value is monotonically increasing with $r$, as can be seen in both Theorem~\ref{thm:convergence1_bounded} and Theorem~\ref{thm:convergence2_bounded}. In other words, the larger the value of $r$ the larger is the best achievable accuracy  $\epsilon$ and the complexity bound on $T$. 
Thus, when Oracle~\ref{oracle.zero}.\ref{ass:bounded noise} is used, it is clearly optimal to set $r=2\epsilon_f$ in Algorithm~\ref{alg:tr} and $r = 2\epsilon_f + \epsilon_g^{3/2}$ in Algorithm~\ref{alg:tr2}.  
However, as $\epsilon_f$ (and $\epsilon_g$) may not be known in practice, here we explore the effect of setting $r$ to a variety of different values with respect to $\epsilon_f$.

In the first set of experiments, we used the same setting as  described in Section~\ref{sec:dual}, with $\epsilon_f=0.2$ and $\epsilon_g=4$ but with $r$ set to different values. 
Figure~\ref{fig:adversarial r0148}, along with  the first line of Figure~\ref{fig:adversarial}, shows the change of $\|\nabla\phi(x_k)\|$ and $\delta_k$ over the iterations when $r = 0, \epsilon_f, 2\epsilon_f, 4\epsilon_f$, and $8\epsilon_f$. 
For experiments with $r\ge 2\epsilon_f$, the level at which $\|\nabla\phi(x_k)\|$ stabilizes get larger with larger values of $r$. 
This phenomenon is already suggested by the theory and is expected. 
However, while the theory suggests that the  algorithm may get ``stuck" if $r < 2\epsilon_f$, this  did not occur in our experiments. 
Instead, we observe the following behavior when $r=0$: in the initial stages of the optimization the algorithm  makes consistent progress because both $\|\nabla\phi(x_k)\|$ and $\delta_k$ are large enough to overcome the noise. 
Moreover, setting $r=0$ prevents increases in $\phi$. 
However, as $\|\nabla\phi(x_k)\|$ decreases, the gradient estimate and the decrease in function value $f(x_k) - f(x_k+s_k)$ become more dominated by noise. 
Without any relaxation in the step acceptance criterion, the noise may cause many successive rejected steps, which would also shrink the trust-region. 
As a result, as $\delta_k$ decreases, function evaluation noise becomes more dominated in $f(x_k) - f(x_k+s_k)$, while the predicted decrease $\|g_k\|\delta_k$ becomes smaller. 
The resulting effect is that it becomes easier for the adversarial noise to be set so that steps for which $\phi$ actually increases get accepted, which explains the monotonic increase of $\phi$ (and $\|\nabla\phi\|$) in the later stage of the experiment. 
As $r$ increases, this effect becomes less prominent and did not appear in the case where $r=\epsilon_f$.

\begin{figure}[htp]
    \centering
\begin{subfigure}[b]{0.5\linewidth}
    \includegraphics[width=\linewidth]{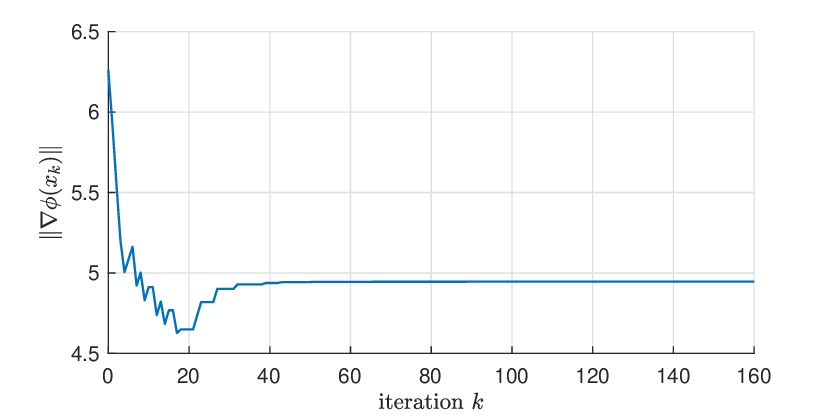}
    \caption{Change of $\|\nabla\phi(x_k)\|$ over $k$ when $r=0$}
\end{subfigure}%
\begin{subfigure}[b]{0.5\linewidth}
    \centering
    \includegraphics[width=\linewidth]{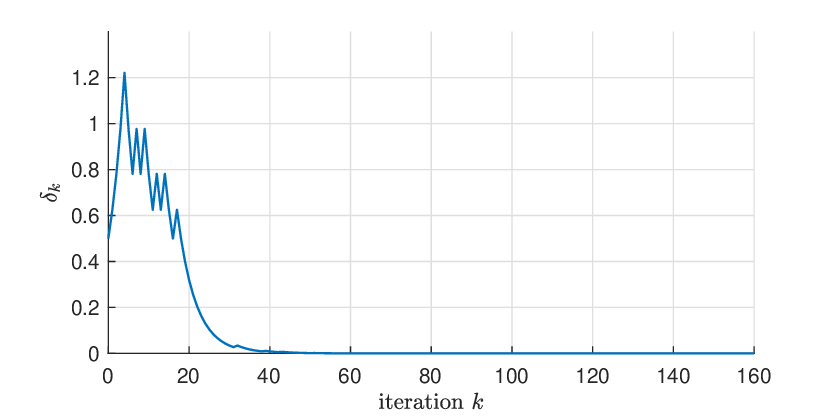}
    \caption{Change of $\delta_k$ over $k$ when $r=0$}
\end{subfigure} \\
\begin{subfigure}[b]{0.5\linewidth}
    \centering
    \includegraphics[width=\linewidth]{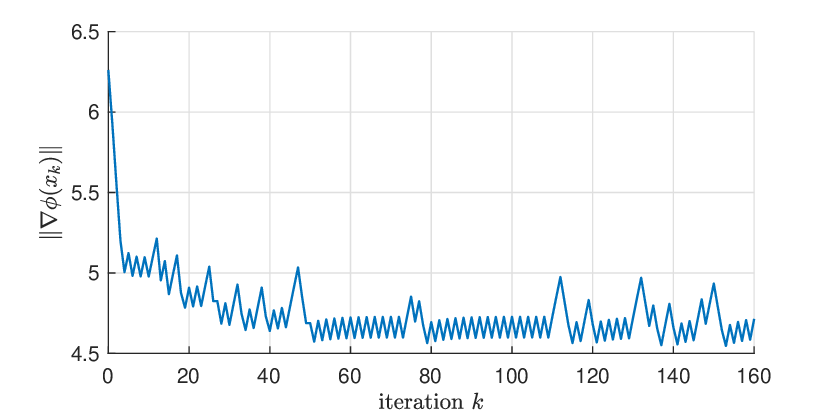}
    \caption{Change of $\|\nabla\phi(x_k)\|$ over $k$ when $r=\epsilon_f$}
\end{subfigure}%
\begin{subfigure}[b]{0.5\linewidth}
    \centering
    \includegraphics[width=\linewidth]{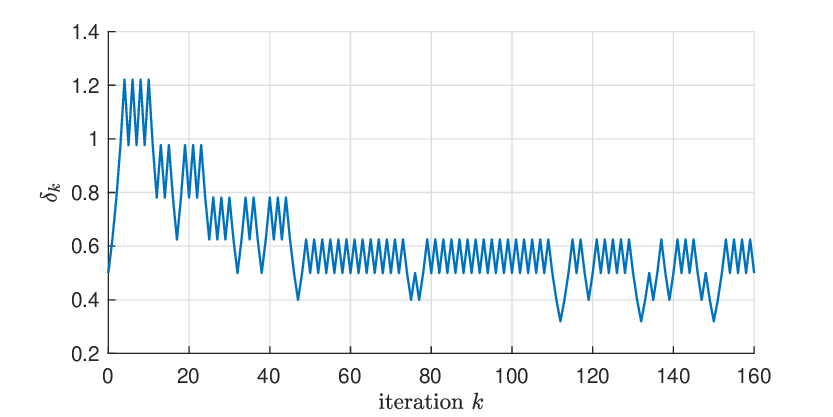}
    \caption{Change of $\delta_k$ over $k$ when $r=\epsilon_f$}
\end{subfigure}\\
\begin{subfigure}[b]{0.5\linewidth}
    \centering
    \includegraphics[width=\linewidth]{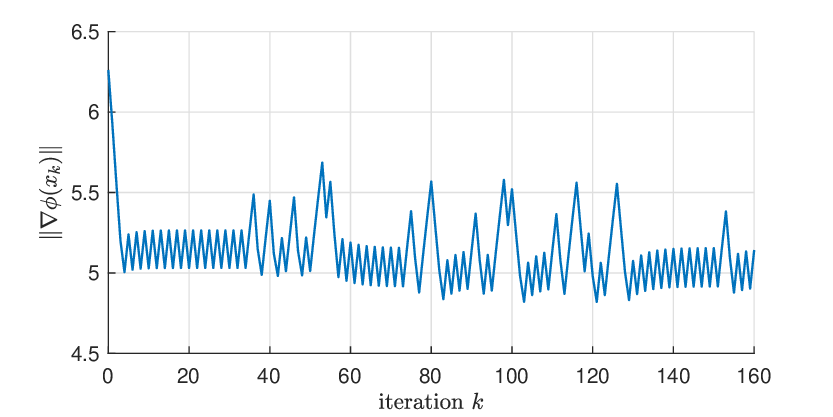}
    \caption{Change of $\|\nabla\phi(x_k)\|$ over $k$ when $r=4\epsilon_f$}
\end{subfigure}%
\begin{subfigure}[b]{0.5\linewidth}
    \centering
    \includegraphics[width=\linewidth]{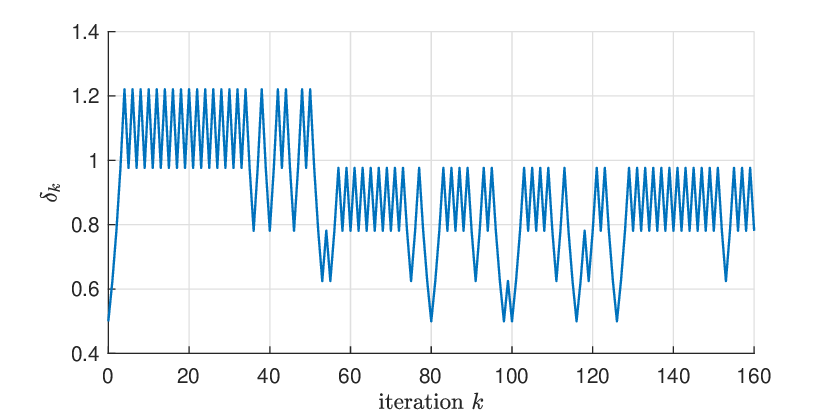}
    \caption{Change of $\delta_k$ over $k$ when $r=4\epsilon_f$}
\end{subfigure} \\ 
\begin{subfigure}[b]{0.5\linewidth}
    \centering
    \includegraphics[width=\linewidth]{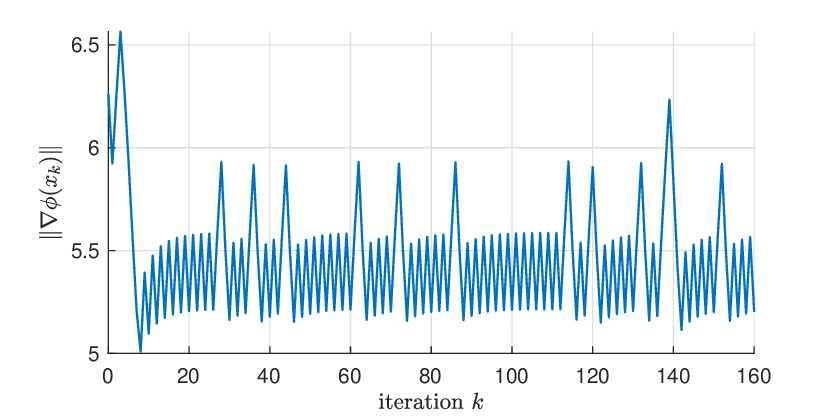}
    \caption{Change of $\|\nabla\phi(x_k)\|$ over $k$ when $r=8\epsilon_f$}
\end{subfigure}%
\begin{subfigure}[b]{0.5\linewidth}
    \centering
    \includegraphics[width=\linewidth]{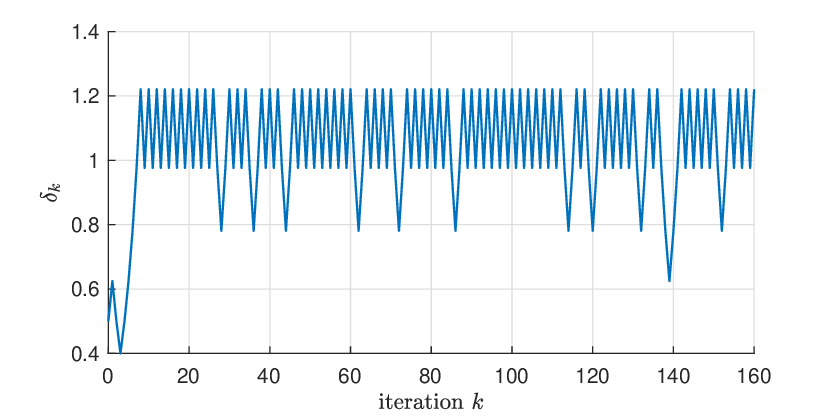}
    \caption{Change of $\delta_k$ over $k$ when $r=8\epsilon_f$}
\end{subfigure}
    \caption{Performance of Algorithm~\ref{alg:tr} with linear approximation models on $\phi(x) = L_1\|x\|^2/2$ under adversarial noise when $r$ is set to various values and $(\epsilon_f, \epsilon_g) = (0.2,4)$.}
    \label{fig:adversarial r0148}
\end{figure}

In the second set of experiments, we examine how the relaxation $r$ affects a practical derivative-free algorithm known as DFO-TR \cite{DFOTRpaper}, which does not always abide by the theory in this paper.  
As a practical algorithm, DFO-TR is different from Algorithms~\ref{alg:tr} and \ref{alg:tr2} and contains many small practical enhancements to improve the numerical performance, but in its essence, employs quadratic interpolation approximation and trust-region method. 
Most importantly, it calculates $\rho_k$ just like Algorithms~\ref{alg:tr} and \ref{alg:tr2} except without the relaxation. 
We add $r$ to the numerator of $\rho_k$ and see how DFO-TR performs with different values for $r$. 

The experiment is conducted on the Mor\'{e} \& Wild benchmarking problem set \cite{more2009benchmarking}. 
We first give DFO-TR infinite budget to solve all the problems and record the best solution for each problem as $\hat\phi$. 
Then the output of the 53 problems are scaled linearly so that $\phi(x_0)=100$ and $\hat\phi=0$ for every problem. 
We replicate each problem 4 times (leading to a total of $53\times5=265$ problems) and then artificially inject noise uniformly distributed on the interval $[-\epsilon_f,+\epsilon_f]$ with $\epsilon_f=0.2$ to function evaluations. 
With the function outputs scaled and the noise added, the problems are solved by DFO-TR subsequently with $r = 0, \epsilon_f, 2\epsilon_f, 4\epsilon_f$, and $8\epsilon_f$. 
Each variant of the algorithm is given a 2000 function evaluation budget for each problem. 
The hyperparameters were tuned to make sure the best of the solutions found by all variants of DFO-TR has a scaled function value close enough to 0. 
The results are compared and presented in performance and data profiles with the (relative) accuracy level $\tau$ set to $10^{-3}$ and $10^{-5}$ (see  \cite{more2009benchmarking} for the detail on how these profiles are created). 
As Figure~\ref{fig:MW uniform} shows, DFO-TR encounters difficulty if $r < 2\epsilon_f$, especially when trying to solve the problems to higher accuracy. 
Having an $r$ too large also affects the performance adversely. 
The best performance comes from the variant with $r=4\epsilon_f$. Considering that DFO-TR uses Hessian approximation and thus resembles Algorithm~\ref{alg:tr2}, whose theory suggests that the optimal $r$ needs to be larger than $2\epsilon_f$, this result is consistent with our theory. 

\begin{figure}[H]
\centering
\begin{subfigure}[b]{0.42\linewidth}
    \includegraphics[width=\linewidth]{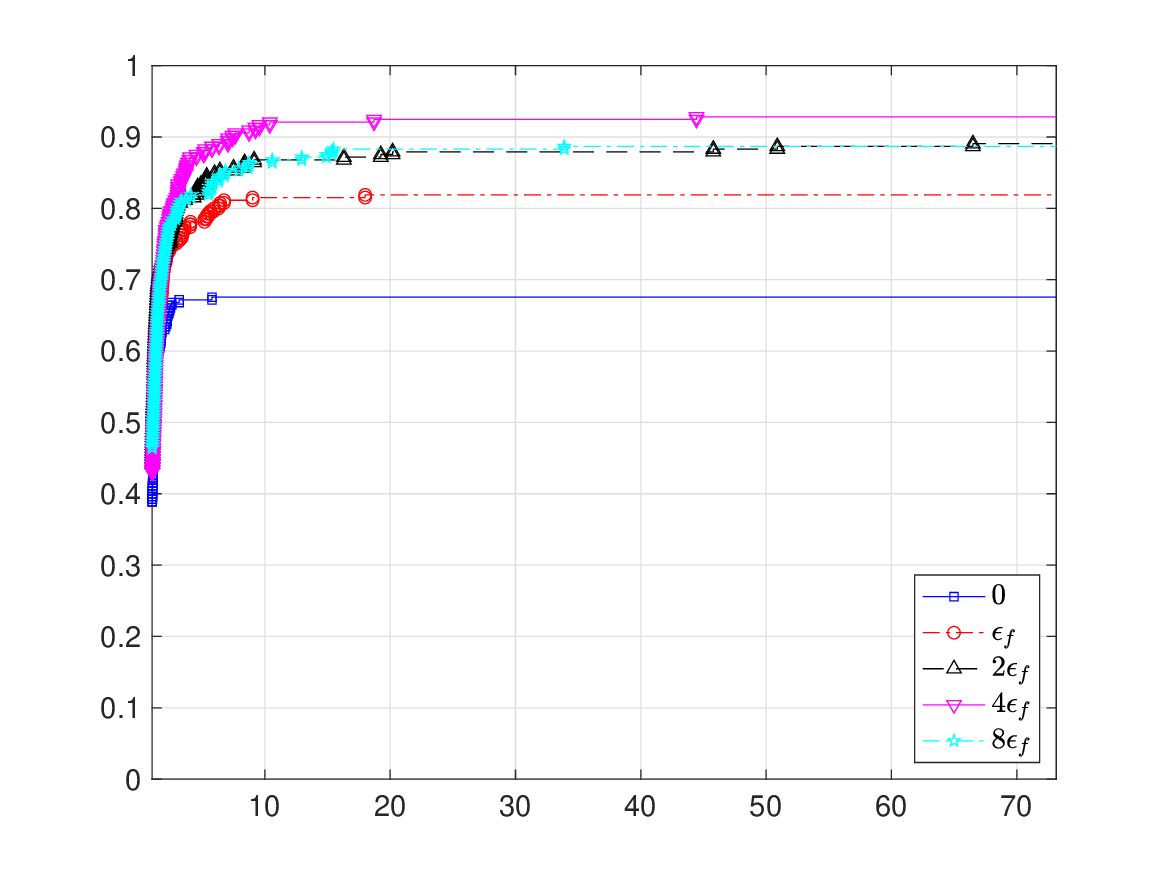}
    \caption{Performance profile with $\tau=10^{-3}$}
\end{subfigure}%
\begin{subfigure}[b]{0.42\linewidth}
    \centering
    \includegraphics[width=\linewidth]{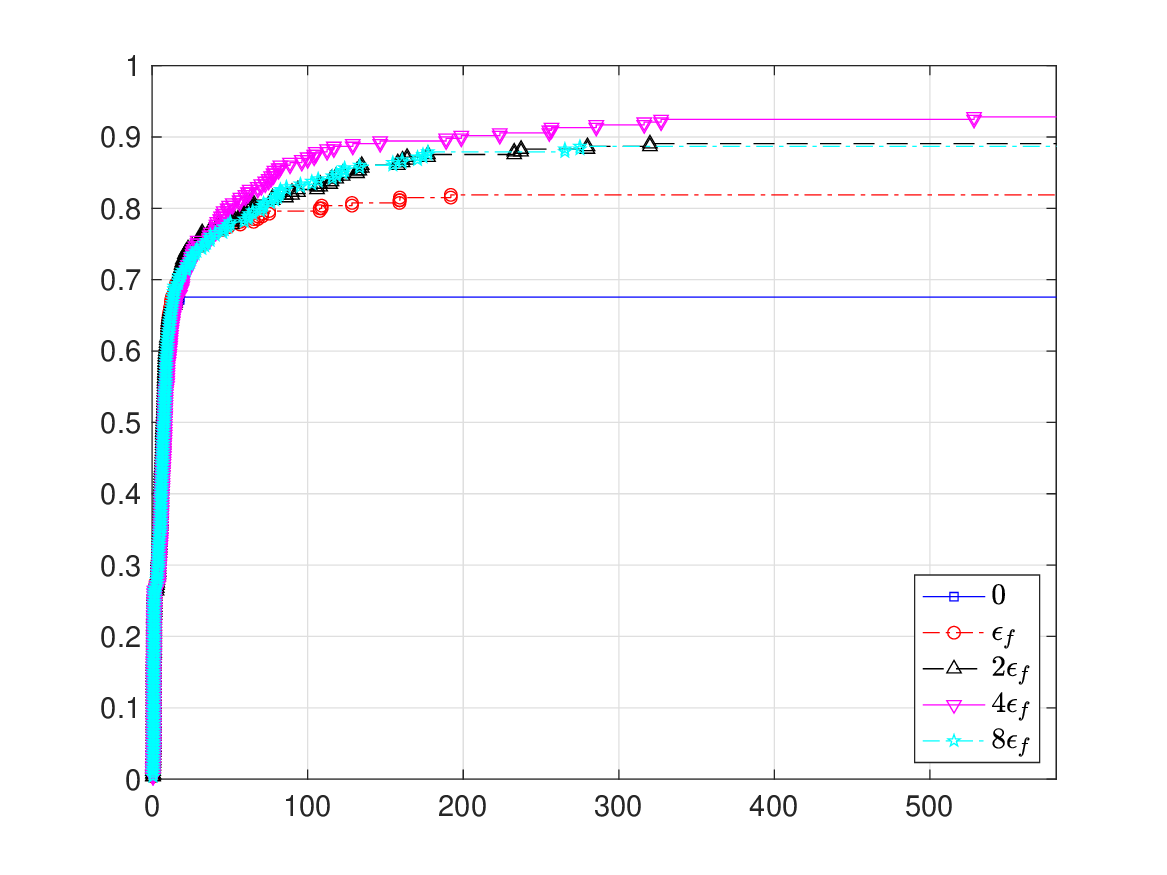}
    \caption{Data profile with $\tau=10^{-3}$}
\end{subfigure}\\
\begin{subfigure}[b]{0.42\linewidth}
    \centering
    \includegraphics[width=\linewidth]{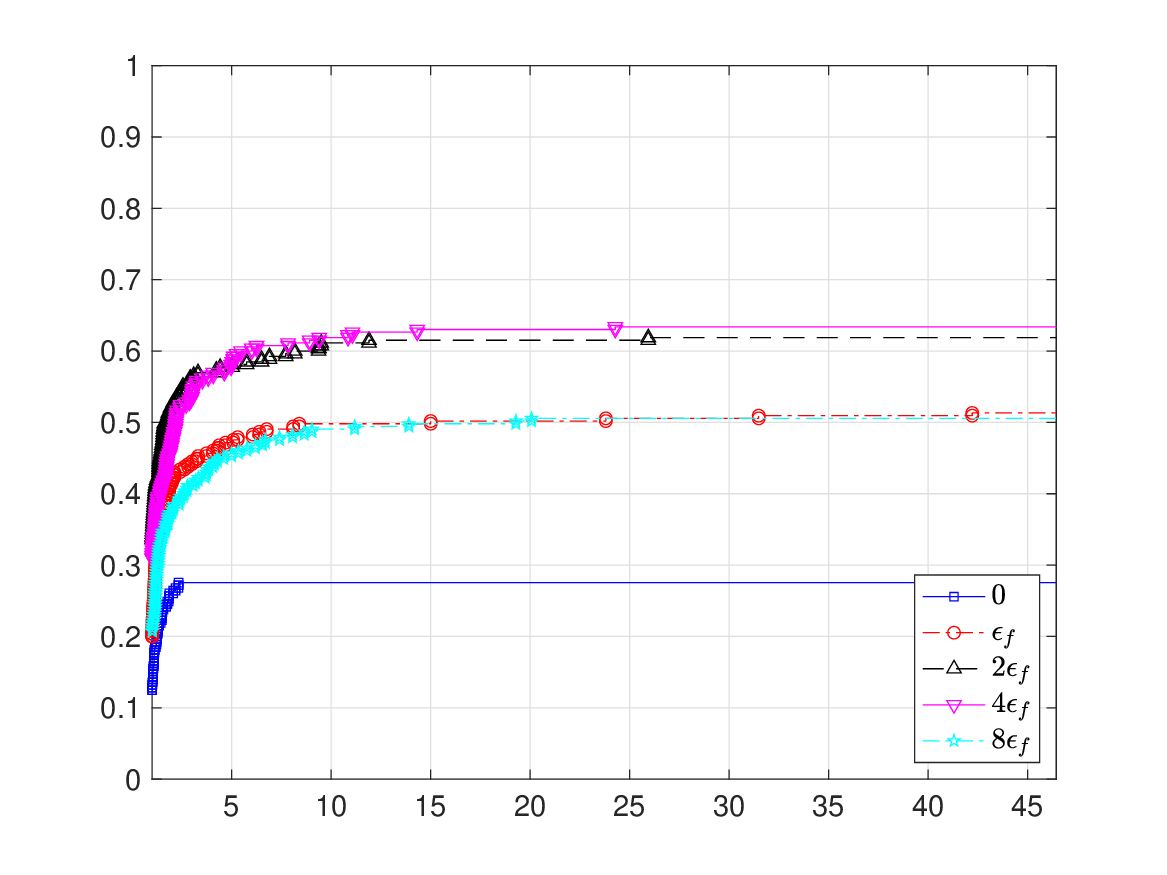}
    \caption{Performance profile with $\tau=10^{-5}$}
\end{subfigure}%
\begin{subfigure}[b]{0.42\linewidth}
    \centering
    \includegraphics[width=\linewidth]{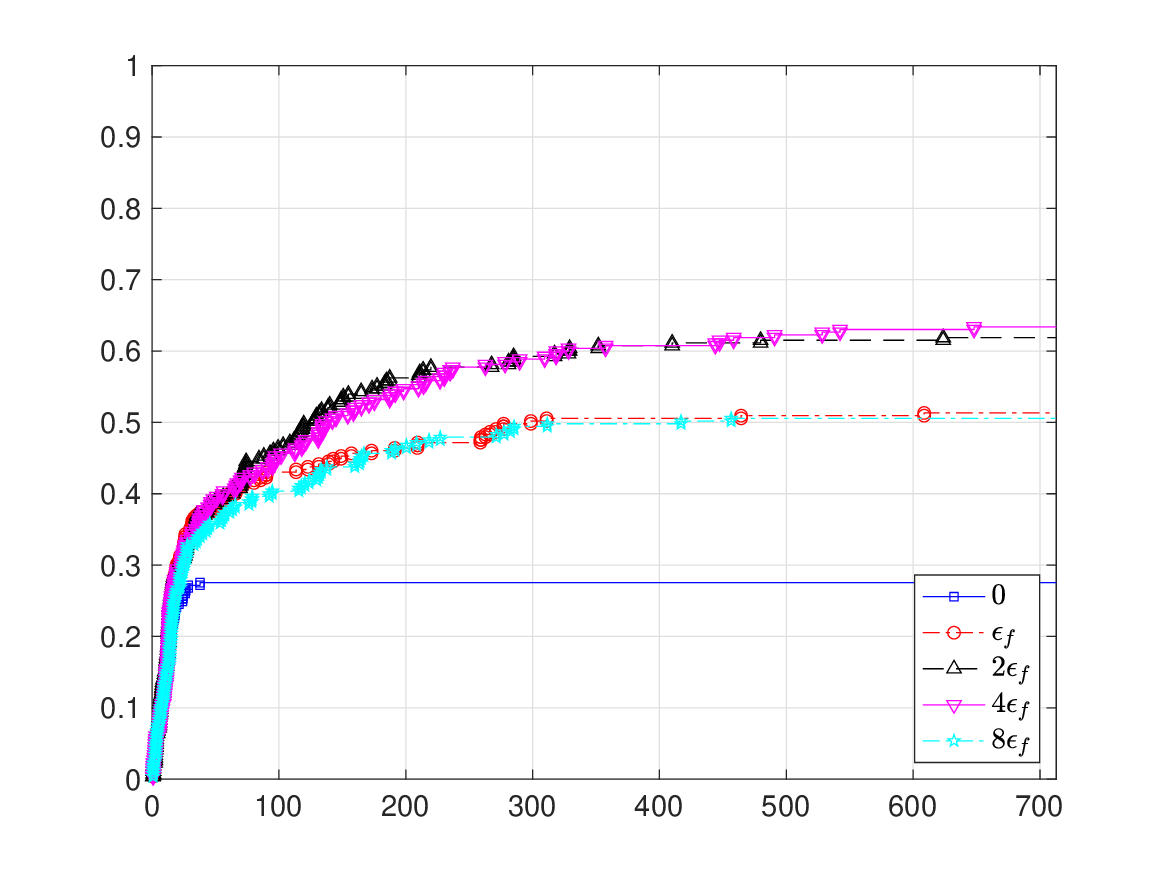}
    \caption{Data profile with $\tau=10^{-5}$}
\end{subfigure}\\
    \caption{Performance of DFO-TR with different relaxed step acceptance criteria ($r = 0, \epsilon_f, 2\epsilon_f, 4\epsilon_f$ and $8\epsilon_f$) on the Mor\'{e} \& Wild problem set with uniformly distributed function evaluation noise.}
    \label{fig:MW uniform}
\end{figure}

Finally, we repeated the  experiment with the uniformly distributed noise replaced by unbounded subexponential noise (Oracle~\ref{oracle.zero}.\ref{ass:subexponential noise}). 
Specifically, each time an objective function is evaluated, two random variables are generated - one uniformly distributed on $[0, \epsilon_f]$ and the other  exponentially distributed with parameter $a$. 
The sum of the two random variables is then multiplied by $-1$ with probability $0.5$ and then added to the true function value. 
We chose  $\epsilon_f=0.1$ and $a=20$ so that the expected magnitude of the noise is $0.1$. 
We test five levels for $r=$ 0, 0.2, 0.5, 1.25, and 3.125. 
The results are presented in Figure~\ref{fig:MW subexponential}. We see that in this case setting $r$ to $0.5$ or $1.25$ is advantageous for the algorithm, which is also consistent with our theory.

\begin{figure}[ht]
    \centering
\begin{subfigure}[b]{0.42\linewidth}
    \includegraphics[width=\linewidth]{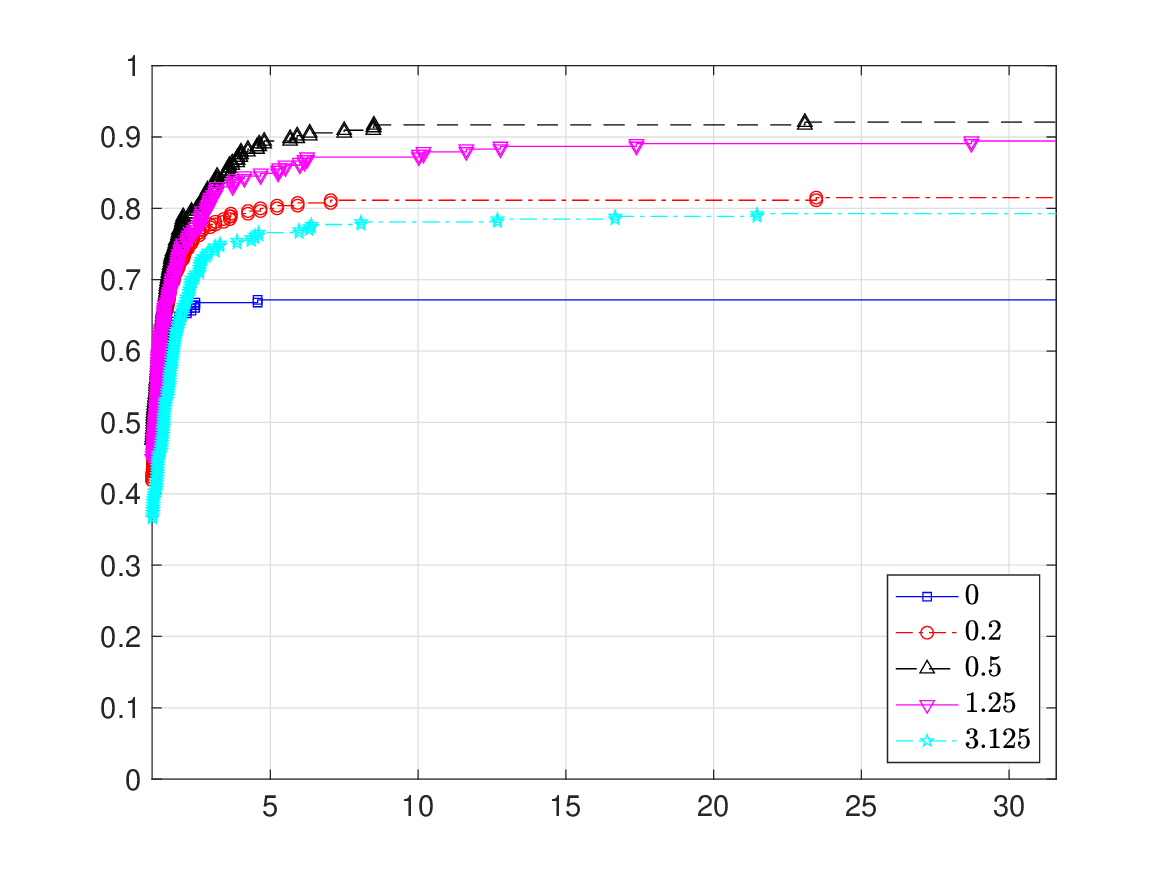}
    \caption{Performance profile with $\tau=10^{-3}$}
\end{subfigure}%
\begin{subfigure}[b]{0.42\linewidth}
    \centering
    \includegraphics[width=\linewidth]{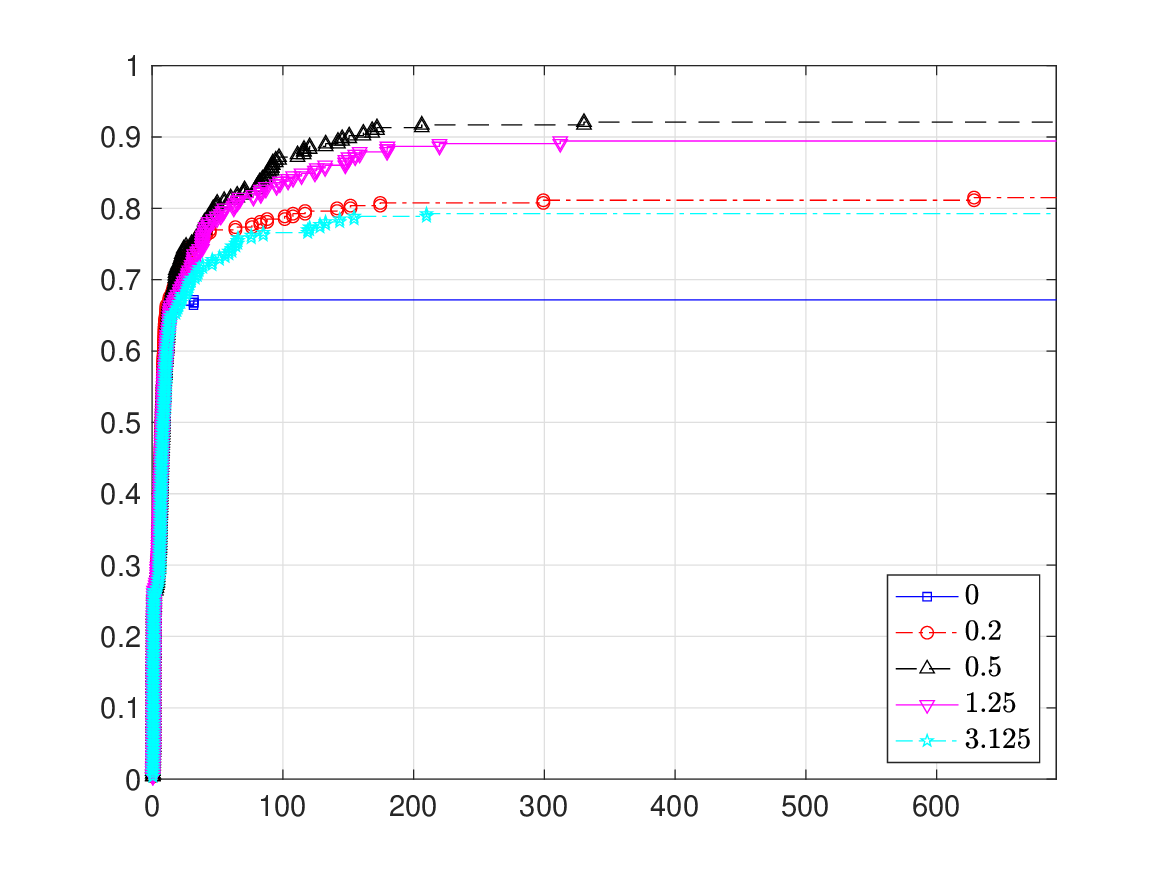}
    \caption{Data profile with $\tau=10^{-3}$}
\end{subfigure}\\
\begin{subfigure}[b]{0.42\linewidth}
    \centering
    \includegraphics[width=\linewidth]{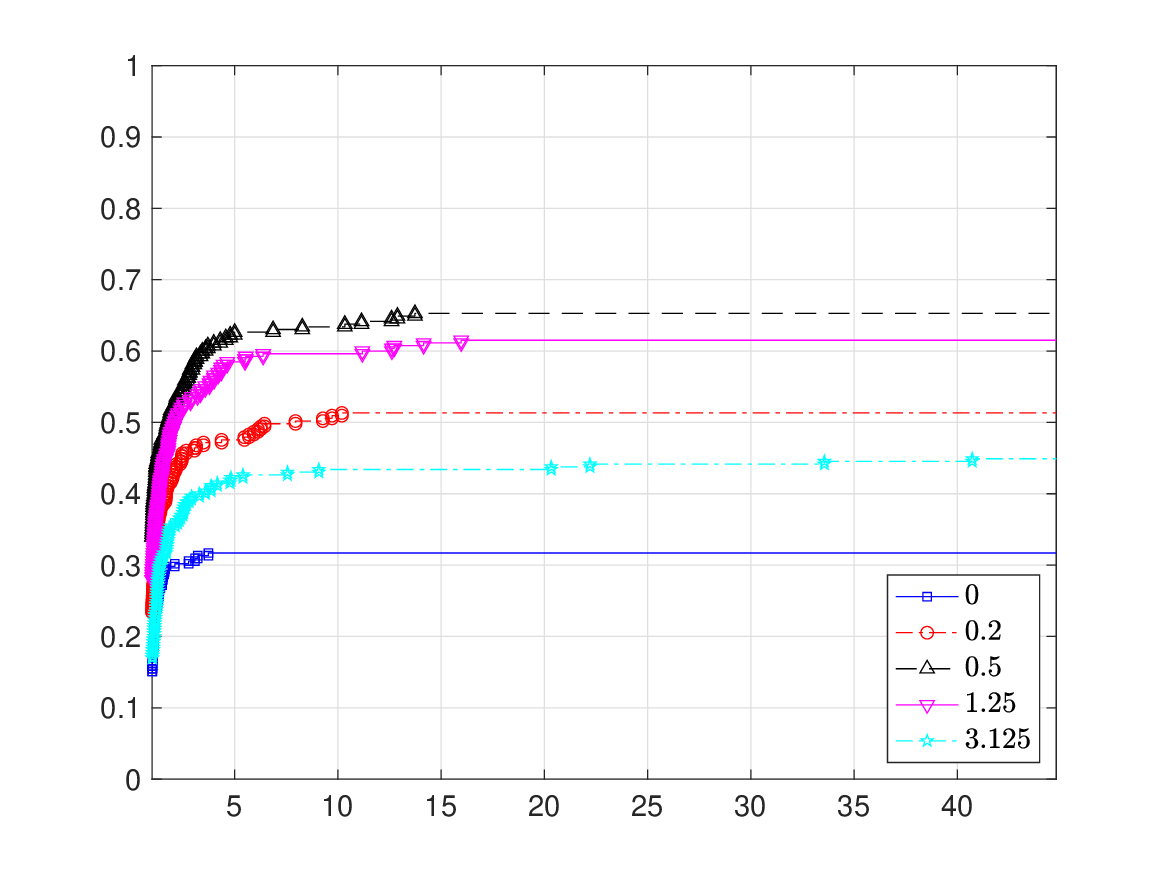}
    \caption{Performance profile with $\tau=10^{-5}$}
\end{subfigure}%
\begin{subfigure}[b]{0.42\linewidth}
    \centering
    \includegraphics[width=\linewidth]{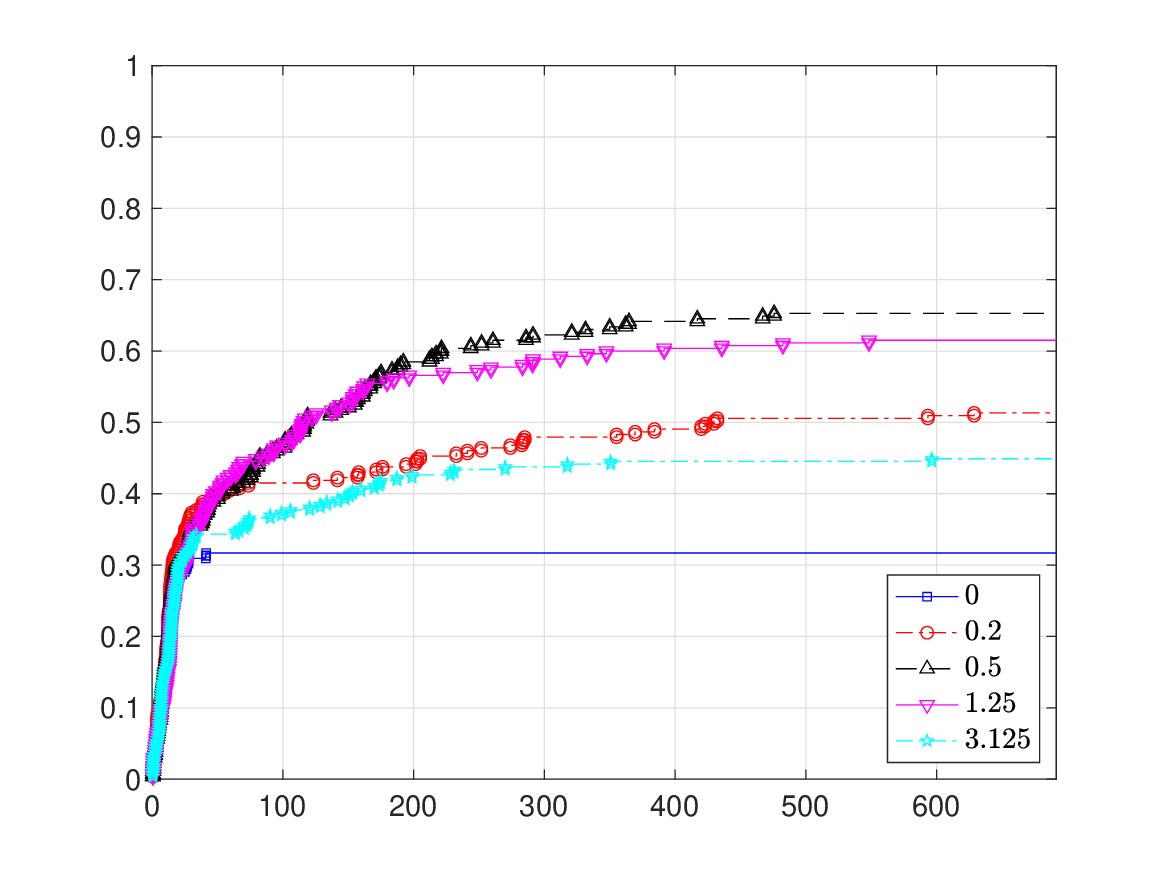}
    \caption{Data profile with $\tau=10^{-5}$}
\end{subfigure}\\
    \caption{Performance of DFO-TR with different relaxed step acceptance criteria ($r = 0, 0.2, 0.5, 1.25,$ and $3.125$) on the Mor\'{e} \& Wild problem set with subexponential function evaluation noise. }
    \label{fig:MW subexponential}
\end{figure}

\section{Conclusions}
\label{sec:conclusion}
We have proposed first- and second-order modified trust-region algorithms for solving noisy (possibly stochastic) unconstrained nonconvex continuous optimization problems. The algorithms utilize estimates of function and derivative information computed via noisy probabilistic zeroth-, first- and second-order oracles. 
The noise in these oracles is not assumed to be smaller than constants $\epsilon_f, \epsilon_g$ and $\epsilon_H$, respectively. 
We show that the first-order method (Algorithm~\ref{alg:tr}) can find an $\epsilon$-first-order stationary point with high probability after ${\cal O}(\epsilon^{-2})$ iterations for any  $\epsilon\geq[{\cal O}(\sqrt{\epsilon_f}) + {\cal O}(\epsilon_g)]$, 
and that the second-order method (Algorithm~\ref{alg:tr2}) can find an $\epsilon$-second-order critical point for any  $\epsilon\geq [{\cal O}(\sqrt[3]{\epsilon_f}) + {\cal O}(\sqrt{\epsilon_g})+ {\cal O}(\epsilon_H)]$ after ${\cal O}(\epsilon^{-3})$ iterations. 
Numerical experiments on standard derivative-free optimization problems and problems with adversarial noise illustrate the performance of the modified trust-region algorithms.

\subsection*{Acknowledgements}
\label{sec:Acknowledgements}
The authors are grateful to the Associate Editor and two anonymous referees for their valuable comments and suggestions. 

\bibliographystyle{plain}
\bibliography{references}

\begin{thebibliography}{10}

\bibitem{DFOTRpaper}
Afonso~S Bandeira, Katya Scheinberg, and Lu\'{i}s~N Vicente.
\newblock Computation of sparse low degree interpolating polynomials and their
  application to derivative-free optimization.
\newblock {\em Mathematical programming}, 134(1):223--257, 2012.

\bibitem{bandeira2014convergence}
Afonso~S Bandeira, Katya Scheinberg, and Lu\'{i}s~N Vicente.
\newblock Convergence of trust-region methods based on probabilistic models.
\newblock {\em SIAM Journal on Optimization}, 24(3):1238--1264, 2014.

\bibitem{berahas2019derivative}
Albert~S Berahas, Richard~H Byrd, and Jorge Nocedal.
\newblock Derivative-free optimization of noisy functions via quasi-newton
  methods.
\newblock {\em SIAM Journal on Optimization}, 29(2):965--993, 2019.

\bibitem{berahas2021theoretical}
Albert~S Berahas, Liyuan Cao, Krzysztof Choromanski, and Katya Scheinberg.
\newblock A theoretical and empirical comparison of gradient approximations in
  derivative-free optimization.
\newblock {\em Foundations of Computational Mathematics}, pages 1--54, 2021.

\bibitem{berahas2021global}
Albert~S Berahas, Liyuan Cao, and Katya Scheinberg.
\newblock Global convergence rate analysis of a generic line search algorithm
  with noise.
\newblock {\em SIAM Journal on Optimization}, 31(2):1489--1518, 2021.

\bibitem{blanchet2019convergence}
Jose Blanchet, Coralia Cartis, Matt Menickelly, and Katya Scheinberg.
\newblock Convergence rate analysis of a stochastic trust-region method via
  supermartingales.
\newblock {\em INFORMS journal on optimization}, 1(2):92--119, 2019.

\bibitem{byrd2012sample}
Richard~H Byrd, Gillian~M Chin, Jorge Nocedal, and Yuchen Wu.
\newblock Sample size selection in optimization methods for machine learning.
\newblock {\em Mathematical programming}, 134(1):127--155, 2012.

\bibitem{carter1991global}
Richard~G Carter.
\newblock On the global convergence of trust region algorithms using inexact
  gradient information.
\newblock {\em SIAM Journal on Numerical Analysis}, 28(1):251--265, 1991.

\bibitem{cartis2018global}
Coralia Cartis and Katya Scheinberg.
\newblock Global convergence rate analysis of unconstrained optimization
  methods based on probabilistic models.
\newblock {\em Mathematical Programming}, 169(2):337--375, 2018.

\bibitem{STORM}
Ruobing Chen, Matt Menickelly, and Katya Scheinberg.
\newblock Stochastic optimization using a trust-region method and random
  models.
\newblock {\em Mathematical Programming}, 169(2):447--487, 2018.

\bibitem{TRbook}
Andrew~R Conn, Nicholas~IM Gould, and Philippe~L Toint.
\newblock {\em Trust region methods}.
\newblock SIAM, 2000.

\bibitem{DFObook}
Andrew~R Conn, Katya Scheinberg, and Lu\'{i}s~N Vicente.
\newblock {\em Introduction to derivative-free optimization}.
\newblock SIAM, 2009.

\bibitem{gratton2018complexity}
Serge Gratton, Cl{\'e}ment~W Royer, Lu{\'\i}s~N Vicente, and Zaikun Zhang.
\newblock Complexity and global rates of trust-region methods based on
  probabilistic models.
\newblock {\em IMA Journal of Numerical Analysis}, 38(3):1579--1597, 2018.

\bibitem{jin2021high}
Billy Jin, Katya Scheinberg, and Miaolan Xie.
\newblock High probability complexity bounds for line search based on
  stochastic oracles.
\newblock {\em Advances in Neural Information Processing Systems},
  34:9193--9203, 2021.

\bibitem{more2009benchmarking}
Jorge~J Mor{\'e} and Stefan~M Wild.
\newblock Benchmarking derivative-free optimization algorithms.
\newblock {\em SIAM Journal on Optimization}, 20(1):172--191, 2009.

\bibitem{nesterov2006cubic}
Yurii Nesterov and Boris~T Polyak.
\newblock Cubic regularization of newton method and its global performance.
\newblock {\em Mathematical Programming}, 108(1):177--205, 2006.

\bibitem{nesterov2017random}
Yurii Nesterov and Vladimir Spokoiny.
\newblock Random gradient-free minimization of convex functions.
\newblock {\em Foundations of Computational Mathematics}, 17(2):527--566, 2017.

\bibitem{paquette2020stochastic}
Courtney Paquette and Katya Scheinberg.
\newblock A stochastic line search method with expected complexity analysis.
\newblock {\em SIAM Journal on Optimization}, 30(1):349--376, 2020.

\bibitem{UOBYQA}
Michael~JD Powell.
\newblock Uobyqa: unconstrained optimization by quadratic approximation.
\newblock {\em Mathematical Programming}, 92(3):555--582, 2002.

\bibitem{powell2001lagrange}
MJD Powell.
\newblock On the lagrange functions of quadratic models that are defined by
  interpolation.
\newblock {\em Optimization Methods and Software}, 16(1-4):289--309, 2001.

\bibitem{sun2022noisy}
Shigeng Sun and Jorge Nocedal.
\newblock A trust region method for the optimization of noisy functions.
\newblock {\em arXiv preprint arXiv:2201.00973}, 2022.

\bibitem{HDPbook}
Roman Vershynin.
\newblock {\em High-dimensional probability: An introduction with applications
  in data science}, volume~47.
\newblock Cambridge university press, 2018.

\bibitem{yuan2015recent}
Ya-xiang Yuan.
\newblock Recent advances in trust region algorithms.
\newblock {\em Mathematical Programming}, 151(1):249--281, 2015.

\end{thebibliography}

\appendix
\section{Proofs}
\label{app:proofs}
\subsection*{Proof to Proposition~\ref{prop.finite_diff2}}

\begin{proof}
It was shown in \cite{nesterov2006cubic} that under Assumption~\ref{ass:lip_cont2} for any $(x,y) \in \R^n \times \R^n$
\[ \left| \phi(y) - \phi(x) - \langle \nabla\phi(x), y-x\rangle - \frac{1}{2}\langle \nabla^2\phi(x) (y-x), y-x\rangle \right| \le \frac{L_2}{6} \|y-x\|^3. 
\]

It implies the following four inequalities hold for any $(i,j) \in \{1,2,\dots,n\}^2$: 
\[ \begin{aligned} 
\phi(x+\sigma u_i+\sigma u_j) &\le \phi(x) + \langle \nabla \phi(x), \sigma u_i+\sigma u_j \rangle + \frac{1}{2} \langle \nabla^2\phi(x) (\sigma u_i+\sigma u_j), \sigma u_i+\sigma u_j \rangle + \frac{L_2}{6} (\sqrt{2} \sigma )^3 \\
-\phi(x+\sigma u_i) &\le -\phi(x) - \langle\nabla\phi(x), \sigma u_i\rangle - \frac{1}{2} \langle\nabla^2\phi(x) \sigma u_i, \sigma u_i\rangle + \frac{L_2}{6} \sigma ^3 \\
-\phi(x+\sigma u_j) &\le -\phi(x) - \langle\nabla\phi(x), \sigma u_j\rangle - \frac{1}{2} \langle\nabla^2\phi(x) \sigma u_j, \sigma u_j\rangle + \frac{L_2}{6} \sigma ^3 \\
\phi(x) &\le \phi(x), 
\end{aligned} \]
which can be added together as 
\[ \phi(x+\sigma u_i+\sigma u_j) - \phi(x+\sigma u_i) - \phi(x+\sigma u_j) + \phi(x)
\le \langle \nabla^2\phi(x) u_i, u_j \rangle + \frac{(\sqrt{2}+1) L_2}{3} \sigma ^3. 
\]
With bounded noise, we have 
\[ f(x+\sigma u_i+\sigma u_j) - f(x+\sigma u_i) - f(x+\sigma u_j) + f(x) \le \langle \nabla^2\phi(x) \sigma u_i, \sigma u_j \rangle + \frac{(\sqrt{2}+1) L_2}{3} \sigma ^3 + 4\hat\epsilon_f. 
\]
Using the same argument as above, it can also be shown that 
\[ -f(x+\sigma u_i+\sigma u_j) + f(x+\sigma u_i) + f(x+\sigma u_j) - f(x) \le -\langle \nabla^2\phi(x) \sigma u_i, \sigma u_j \rangle + \frac{(\sqrt{2}+1) L_2}{3} \sigma ^3 + 4\hat\epsilon_f.  
\]
Combining the last two inequalities, we obtain
\[ |\langle H(x) u_i, u_j \rangle - \langle \nabla^2\phi(x) u_i, u_j \rangle | 
\le  \frac{(\sqrt{2}+1) L_2}{3} \sigma  + \frac{4\hat\epsilon_f}{\sigma ^2}. 
\]
Then 
\[ \|H(x) - \nabla^2\phi(x)\| \le \|H(x) - \nabla^2\phi(x)\|_F 
\le \sqrt{n^2 \left( \frac{(\sqrt{2}+1) L_2}{3} \sigma  + \frac{4\hat\epsilon_f}{\sigma ^2} \right)^2} 
= \frac{(\sqrt{2}+1) n L_2}{3} \sigma  + \frac{4n\hat\epsilon_f}{\sigma ^2}. 
\]
\end{proof}

\subsection*{Proof to Proposition~\ref{prop:subexponential}}
\begin{proof}
    By the Taylor series of the exponential functions, it follows that
    \[
    \mathbb{E} \exp(\lambda |X|) 
    = \mathbb{E} \sum_{p=0}^\infty \frac{1}{p!} (\lambda |X|)^p 
    = 1 + \sum_{p=1}^\infty \frac{1}{p!} \lambda^p \mathbb{E}|X|^p
    \]
    for any real $\lambda$. 
    Applying the integral identity \cite[Lemma 1.2.1]{HDPbook}),
    \[ \mathbb{E} |X|^p = \int_0^\infty \mathbbm{P}\{|X|^p \ge u\} {\rm d}u 
    = \int_0^\infty \mathbbm{P}\{|X|^p \ge t^p\} {\rm d} t^p 
    \le \int_0^\infty \min\{1, \exp(a(b-t))\} {\rm d} t^p, 
    \]
    which is valid for all $p > 0$. 
    Since $1 \le \exp(a(b-t))$ when $t \le b$, the above result can be written as 
    \[ \mathbb{E} |X|^p \le \int_0^b p t^{p-1} {\rm d} t + \int_b^\infty p t^{p-1} \exp(a(b-t)) {\rm d} t. 
    \]
    Thus, for all $\lambda \in [0, a)$ it follows that
    \[ \begin{aligned} 
    \mathbb{E} \exp(\lambda |X|) 
    &\le 1 + \sum_{p=1}^\infty \frac{1}{p!} \lambda^p \left[ \int_0^b p t^{p-1} {\rm d} t + \int_b^\infty p t^{p-1} \exp(a(b-t)) {\rm d} t \right] \\
    &= 1 + \lambda \int_0^b \sum_{p=1}^\infty \frac{1}{(p-1)!} (\lambda t)^{p-1} {\rm d} t + \lambda e^{ab} \int_b^\infty \sum_{p=1}^\infty \frac{1}{(p-1)!} (\lambda t)^{p-1} e^{-at} {\rm d} t \\
    &= 1 + \lambda \int_0^b \exp(\lambda t) {\rm d} t + \lambda e^{ab} \int_b^\infty \exp((\lambda-a) t) {\rm d} t \\
    &= 1 + \left[ \exp(\lambda b) - 1 \right] + \lambda e^{ab} \frac{1}{a-\lambda} \exp((\lambda-a)b) \\
    &= \frac{a}{a - \lambda} \exp(\lambda b). 
    \end{aligned} \]
\end{proof}

\section{Details on the adversarial gradient estimate}
\label{app:dual details}
When $I_k=1$, problem \eqref{prob:most loss} can be written as 
\begin{equation} \label{prob:most loss reformulated} \begin{aligned}
    &\min_{y_1,y_2} &&y_1 \;\; &&\text{Maximize the loss. } \\
    &\text{s.t.} &y_1 &\ge \frac{\eta_1 y_2}{L_1} + \frac{\delta_k}{2} - \frac{2\epsilon_f+r}{L_1 \delta_k} \;\; &&\text{Step is accepted. } \\
    &~ &y_1 &\ge \frac{y_2}{2L_1} + \frac{(L_1\|x_k\|)^2 - (\kappa_{\rm eg}\delta_k + \epsilon_g)^2}{2L_1 y_2} \;\; &&\text{Gradient is sufficiently accurate.} \\
    &~ &-\|x_k\| &\le y_1 \le \|x_k\| \;\; &&\text{Ensure $|\langle x_k, g_k/\|g_k\| \rangle| \le \|x_k\|$.} \\
    &~ &y_2 &\ge \min\{10^{-6}, 10^{-2}\|L_1x_k\|\} \;\; &&\text{Ensure $\|g_k\| > 0$.}
\end{aligned} \end{equation}
If $L_1\|x_k\| \le \kappa_{\rm eg}\delta_k + \epsilon_g$, all three lower bounds on $y_1$ are monotonically non-decreasing with respect to $y_2$, so we set $y_2$ to $\min\{10^{-6}, 10^{-2}\|L_1x_k\|\}$ and $y_1$ to the largest of the three lower bounds. 
Then if $y_1 \le \|x_k\|$ holds, the problem is solved; otherwise the problem is infeasible. 
Alternatively, if $L_1\|x_k\| > \kappa_{\rm eg}\delta_k + \epsilon_g$, the second lower bound of $y_1$ becomes a positive convex function of $y_2$. 
We set $y_2$ to the minimizer of this convex function $\sqrt{(L_1\|x_k\|)^2 - (\kappa_{\rm eg}\delta_k + \epsilon_g)^2}$, and $y_1$ to its minimum value $\sqrt{\|x_k\|^2 - (\kappa_{\rm eg}\delta_k + \epsilon_g)^2 / L_1^2}$. 
Now if $y_1 \ge \delta_k/2$, the algorithm cannot be tricked into taking a bad step and we move on to problems \eqref{prob:reject1} and \eqref{prob:reject2}; otherwise all constraints are satisfied except maybe the first one, so we check it. 
If it is satisfied, the problem is solved; if not, $y_2$ needs to be reduced until the first and second lower bounds of $y_1$ are equal, which requires solving a quadratic equation. 
We set $y_2$ to the root between 0 and $\sqrt{(L_1\|x_k\|)^2 - (\kappa_{\rm eg}\delta_k + \epsilon_g)^2}$ and $y_1$ to the resulting lower bound. 
If this solution satisfies $y_1 \le \|x_k\|$, the problem is solved, otherwise it is infeasible. 

For problems \eqref{prob:reject1} and \eqref{prob:reject2}, where we try to find a value for $g_k$ that would lead to a step being rejected, if $L_1\|x_k\| \le \kappa_{\rm eg}\delta_k + \epsilon_g$, we can simply set $g_k = 0$. 
Now we assume $L_1\|x_k\| > \kappa_{\rm eg}\delta_k + \epsilon_g$. 
With the additional substitution $y_3 = \eta_1 y_2 - L_1 y_1$, problem \eqref{prob:reject1} is formulated as 
\begin{equation} \label{prob:reject1 reformulated} \begin{aligned}
    &\max_{y_2,y_3} &&y_3 \;\;&&\text{Try to get the step rejected. } \\
    &\text{s.t.} &y_3 &\le \frac{2\eta_1-1}{2}y_2 - \frac{(L_1\|x_k\|)^2 - (\kappa_{\rm eg}\delta_k+\epsilon_g)^2}{2y_2} \;\;&&\text{Gradient is sufficiently accurate.} \\
    &~ &y_3 &\ge \eta_1 y_2 - L_1\|x_k\| \;\;&&\text{Ensure $|\langle x_k, g_k/\|g_k\| \rangle| \le \|x_k\|$.} \\
    &~ &y_2 &\ge \min\{10^{-6}, 10^{-2}\|L_1x_k\|\} \;\;&&\text{Ensure $\|g_k\| > 0$.} \\
    &~ &y_3 &> \eta_1 y_2 - L_1\delta_k/2 \;\;&&\text{Step leads to loss of progress. } 
\end{aligned} \end{equation}
Note the constraint $y_3 \le \eta_1 y_2 + L_1\|x_k\|$, which ensures $y_1 = \langle x_k, g_k/\|g_k\| \rangle \ge -\|x_k\|$, is not present because it is covered by the first constraint in \eqref{prob:reject1 reformulated}.  
If $2\eta_1 < 1$, the upper bound on $y_3$ is a concave function of $y_2$. 
We set $y_2$ to its maximizer $\sqrt{[(L_1\|x_k\|)^2 - (\kappa_{\rm eg}\delta_k+\epsilon_g)^2] / (1-2\eta_1)}$ and $y_3$ to the optimal value $- \sqrt{[(L_1\|x_k\|)^2 - (\kappa_{\rm eg}\delta_k+\epsilon_g)^2](1-2\eta_1)}$. 
If this solution is feasible, the problem is solved; otherwise the upper bound is lower than at least one of the lower bounds, in which case we reduce $y_2$ until the upper and lower bounds of $y_3$ are equal. 
If $2\eta_1 \ge 1$, the upper bound on $y_3$ increases as $y_2$ increases. 
When $y_2$ is sufficiently large, the two lower bounds on $y_3$ increase faster with $y_2$ than the upper bound, so $y_2$ can only be increased until the bounds meet. 
Thus, in either case, we need to solve the quadratic equation 
\[ \begin{aligned} 
&\frac{2\eta_1-1}{2}y_2 - \frac{(L_1\|x_k\|)^2 - (\kappa_{\rm eg}\delta_k+\epsilon_g)^2}{2y_2} 
= \eta_1 y_2 - L_1\min\{ \|x_k\|, \delta_k/2 - 10^{-7}\} \\
&y_2^2 - 2L_1\min\{ \|x_k\|, \delta_k/2 - 10^{-7}\} y_2 + (L_1\|x_k\|)^2 - (\kappa_{\rm eg}\delta_k+\epsilon_g)^2 = 0, 
\end{aligned} \] 
where the $10^{-7}$ is there to deal with the strict inequality. 
The optimal value for $y_2$ should be its larger root, and $y_3$ is the corresponding bound. 
If there is no real root, the problem is infeasible. 

With the substitutions, problem \eqref{prob:reject2} is formulated as 
\begin{equation} \begin{aligned}
    &\max_{y_2,y_3} &&y_3 &&\text{Try to get the step rejected. } \\
    &\text{s.t.} &y_3 &\le \frac{2\eta_1-1}{2}y_2 - \frac{(L_1\|x_k\|)^2 - (\kappa_{\rm eg}\delta_k+\epsilon_g)^2}{2y_2} &&\text{The gradient is sufficiently accurate.} \\
    &~ &y_3 &\ge \eta_1 y_2 - L_1\|x_k\| &&\text{To ensure $|\langle x_k, g_k/\|g_k\| \rangle| \le \|x_k\|$.} \\
    &~ &y_2 &> 0 &&\text{To ensure $\|g_k\| > 0$.} \\
    &~ &y_3 &\le \eta_1 y_2 - L_1\delta_k/2 &&\text{The step leads to a gain of progress.} 
\end{aligned} \end{equation}
Assume $\|x_k\| \ge \delta_k/2$ for feasibility. 
If $2\eta_1 < 1$, we first set $y_2,y_3$ to the maximizer and maximum value of the concave right-hand side of the first constraint. 
Then if this solution violates the last constraint, we set $y_2,y_3$ to the larger one of the two points where the two upper bounds of $y_3$ meet. 
If this solution instead violates the second constraint or $2\eta_1 \ge 1$, we set $y_2,y_3$ to the larger one of the two points where the right-hand sides of the first two constaints are equal. 

Problem \eqref{prob:least gain} can be reformulated as \eqref{prob:most loss reformulated} but without the acceptance constraint. 
Since we have explained how to analytically solve \eqref{prob:most loss reformulated}, it should be clear how to solve the simpler \eqref{prob:least gain}. 
Same goes for problem \eqref{prob:most loss} without the sufficiently accurate gradient constraint.

The optimal $g_k$ needs to be recovered from $y_1,y_2$. 
We let $g_k = \alpha_1 x_k + \alpha_2 v$, where $\alpha_1,\alpha_2$ are real variables, and $v\in\R^n$ is a unit vector with a random direction. 
By solving the system of equations
\begin{equation} \begin{aligned} 
    \langle x_k, \alpha_1 x_k + \alpha_2 v \rangle &= y_1 y_2 \\
    \|\alpha_1 x_k + \alpha_2 v\| &= y_2, 
\end{aligned} \end{equation}
we obtain the values for $\alpha_1, \alpha_2$, and hence can recover $g_k$.

\end{document}